\definecolor{rltred}{rgb}{0.75,0,0}
\definecolor{rltgreen}{rgb}{0,0.5,0}
\definecolor{rltblue}{rgb}{0,0,0.75}
\theoremstyle{plain}                                                
  \newtheorem{theorem}{Theorem}[section]
  \newtheorem{lemma}[theorem]{Lemma}
  \newtheorem{proposition}[theorem]{Proposition}
\theoremstyle{definition}
  \newtheorem{definition}[theorem]{Definition}
\newtheorem{example}[theorem]{Example}
\newtheorem{assumption}[theorem]{Assumption}
\theoremstyle{remark}
\newtheorem{remark}[theorem]{Remark}
\newcommand{\bitem}{\begin{itemize}}
\newcommand{\eitem}{\end{itemize}}
\newcommand{\benu}{\begin{enumerate}}
\newcommand{\eenu}{\end{enumerate}}
\newcommand{\beq}{\begin{eqnarray*}}
\newcommand{\eeq}{\end{eqnarray*}}
\newcommand{\RR}{\mathbb{R}}
\newcommand{\R}{\mathbb{P}}
\newcommand{\Q}{\mathbb{Q}}
\newcommand{\qu}{\mathcal{Q}}
\newcommand{\Y}{\mathcal{Y}}
\newcommand{\E}{\mathbb{E}}
\newcommand{\EE}{\mathbb{E}}
\newcommand{\F}{\mathcal{F}}
\newcommand{\efe}{\mathbb{F}}
\newcommand{\Prob}{\mathbb{P}}
\newcommand{\ind}[1]{\mathds{1}_{#1}}
\numberwithin{equation}{section}
\title{ Robust utility maximization without model compactness 
\author{
Julio D. Backhoff Veraguas   \thanks{   Institut f\"ur Mathematik of Universit\"at Wien. Initial support by the Berlin Mathematical School, and currently by the project FWF Y782-N25, are gratefully acknowledged.
              \texttt{julio.backhoff@univie.ac.at}   } \and Joaqu\'in Fontbona 
 \thanks{ Department of Mathematical Engineering and Center for Mathematical Modeling,  UMI(2807) UCHILE-CNRS, Universidad de Chile.
              Casilla 170-3, Correo 3, Santiago-Chile. Partially supported by Fondecyt Grant 1110923,  Basal-CONICYT CMM, and  Millenium Nucleus  NC120062.
              \texttt{fontbona@dim.uchile.cl}}
 }
 }
\begin{document}
\maketitle

\abstract
We formulate conditions for the solvability of the problem of  robust utility maximization from final wealth  in continuous time financial markets,  without assuming weak compactness of the  densities of the uncertainty set, as customary in the literature. Relevant examples of such a situation typically arise   when the uncertainty set is determined through moment constraints. Our approach is based on identifying functional  spaces naturally associated  with the elements of each problem. For general markets these are modular spaces, through which we can prove a minimax equality and the existence of optimal strategies by exploiting the compactness, which we establish,  of the  image by the utility function of the set of attainable wealths. In complete markets we obtain additionally the existence of a worst-case measure, and combining our ideas  with abstract entropy minimization techniques, we  moreover  provide in that case a  novel methodology for the characterization of such measures. 

\medskip

{\bf Keywords:}Robust utility maximization,  non-compact  uncertainty set, modular space, Orlicz space,  worst-case measure, entropy minimization
\normalsize

\begin{description}
\item [{\bf MSC 2010}: 91G10,49N15,46E30].
\item [{\bf JEL: D81}].
\end{description}


\section{Introduction}

The problem of utility maximization in continuous time models of financial markets has been thoroughly researched in the last decades.
However, in a standard utility maximization problem one is forced to choose (or say fix) a probability measure under which the random objects in the model shall evolve. In practical terms it is next to impossible to, with complete accuracy, compute the real-world measure. For instance any statistical method shall only sign out a region of confidence for it. Therefore one is quickly led to consider utility maximization under families of possible measures (we refer to this as the \textit{uncertainty set} or \textit{set of priors}, usually denoted $\qu$) rather than over a unique a priori one; see \cite{Gil} for more on this idea. A commonly adopted (though very conservative) point of view is to look for  strategies that are optimal in the worst possible sense:
$$\mbox{maximize } \inf_{\Q\in\qu}\EE^{\Q}\left[\mbox{utility}(X)\right]\mbox{ over all admissible terminal wealths }X\mbox{ starting at }x .$$
 We will also consider  here such a point of view and, as usual in the literature, we shall refer to this stochastic optimization problem as the robust variant of the (standard, non-robust) utility maximization one.

In \cite{Quenez,Gundel,SchiedWu,Schied,FollGun}, to name a few, the problem of robust utility maximization from terminal wealth is solved in a way that greatly recovers the results known for the non-robust situation. The authors successfully apply convex-duality arguments and deliver attainability of the problem (as well as of  its dual, conjugate problem) and even the existence of what may be called a ``worst-case measure''; this is, a measure in the given family for which the optimal utility is as low as it gets. In  presence of consumption, the problem has also been considered in e.g. \cite{Ruschconsumption,Wit}. 
Robust portfolio optimization problems have also   been studied by using other tools, see e.g.  \cite{HH} for a stochastic control approach (via PDEs), as well as \cite{BMS} and the references therein for an approach using BSDEs. The case when the uncertainty set is not dominated by a single reference measure, motivated by the issue of misspecification of volatilities, was popularized by \cite{Denismax}, where it was studied under a tightness hypothesis. 

Whatever the approach,  some type of compactness  assumption on the family of possible measures seems prevalent in most of the aforementioned works, the usual assumption  in the dominated case being that the densities of the laws in the uncertainty set  form a uniformly integrable set. However, even extremely simple instances of the problem suggests that this assumption is too stringent (see Example \ref{QnotcloL0}).
 Moreover,  very little  concrete information is known about the  worst-case measure,  beyond very specific instances of the problem,  despite the fact  the dual of the robust utility maximization problem that it solves actually  is a ``convex problem'' (namely to minimize a convex functional under linear-convex constraints) when seen as an infinite-dimensional optimization problem.

In the present work, we  will restrict ourselves to the dominated case and we will only consider utilities on the positive half-line. In this setting, we will introduce a unified functional framework for the robust portfolio optimization problem that naturally  copes  with part of the   aforementioned  non-satisfactory aspects of the available literature. Our approach will be based on finding an appropriate Banach space where hypothetical worst-case measures should  a fortiori lie. 
 This space will turn out to be a convex modular space (see \cite{Musielak}), and it will be closely related to the optimization problems at hand, more concretely, to the convex dual problem related to the Legendre transform of the utility function.  In this setting, the robust utility maximization problem will reduce to solving:
$$\mbox{ maximize } \inf_{Q\in\qu}\EE^{\mathbb{P}}\left[\frac{d \Q}{d \mathbb{P}} K\right] \mbox{ over }K\in \mathcal{K}$$
where $\mathcal{K}$ is the image through the utility function of all possible terminal wealths (with common initial starting point). The crucial argument, as well as the point where most mathematical difficulties arise, is to provide  verifiable conditions on the utility function and the market under which $\mathcal{K}$ is a weakly compact set  in   the norm-dual of the mentioned modular space.
We will  rely on this approach in Theorem \ref{minimaxsinreflexividad} to prove the usual minimax equality as well as the existence of optimal wealth processes and conjugacy of value functions. We thus extend some of the results in \cite{SchiedWu,FollGun} roughly assuming that the densities of the uncertainty set be contained in the modular space and that they form a weakly closed set with respect to this topology, instead of the usual compactness assumption  (we thank a referee for pointing out that the argument in \cite{SchiedWu} for the existence of optimal wealths actually holds without compactness as well), and we do so without relying on the existence of a saddle point (the worst case measure) or on any assumption implying this. We envision that this functional point of view and the described compactness of $\mathcal{K}$ should thus open the way to new applications. Indeed, already  the characterization of worst-case measures in the complete case,  which  we will carry out in the present work, is only possible thanks to the functional setting we adopt, and the compactness of $\mathcal{K}$ has been crucially applied in \cite{BSsensibilidad2} in the context of sensitivity analysis.

 When aiming to recover  those results in \cite{SchiedWu,FollGun} not covered by our Theorem \ref{minimaxsinreflexividad}, for instance the existence of a worst-case measure, we realize that replacing the usual compactness assumption by reflexivity of the modular space is a sufficient condition to do this. In this respect we prove, modulo some pathologies on the filtered probability space, that our modular spaces are unfortunately never  reflexive for strict incomplete markets; this is the content of Theorem \ref{tristeintro} (more specifically Theorem \ref{triste} and the remarks thereafter).
 
  On the positive side, when we specialize our analysis to complete markets, our modular spaces become Orlicz-Musielak spaces  and we can provide easily verifiable conditions under which they become reflexive. Of course, Orlicz spaces are well known about in Mathematical Finance (see e.g.\  \cite{Cheridito} regarding risk measures, \cite{Gol} on utility maximization and \cite{Biagini} on admissibility of trading strategies). Related to our work, in \cite{FollGun,Gundel} Orlicz spaces arise  in connection to the  Vallee Poussin criterion  when studying the problem by means of f-divergences, and we will comment more about this in Section \ref{comparison}. Our choice of an Orlicz space, in the complete case, obeys different   considerations  and  makes a more systematic use of the properties of the space in connection to the robust problem; furthermore, our functional setting  will be crucial for the new application which we have already hinted at and which we discuss next.
    
 Using our Orlicz space  formulation of the dual (minimization) problem for complete markets we will give, in the reflexive setting, a novel and  explicit characterization of the worst-case measure that covers  a much broader range of applications than is available in the literature. 
More precisely, by writing the general set of possible models $\qu$ in terms  of a potentially infinite system of linear constraints (that may  be thought of  as moment constraints on some market observables or insider information), we will be able to adapt to the financial framework  some general entropy minimization  techniques developed in \cite{LeoAbstract,Leo-ent} and   characterize  in  Theorem \ref{teocompletocaractintro}   the worst-case measure $\hat{\Q}\in\qu$ in terms of a related abstract concave  maximization problem. We may call it with some abuse the \textit{dual of a dual} problem. By finding  a  solution $g$   to  that problem we obtain the expression: 
$$\hat{\Q}=  \mbox{risk-neutral density} \times [U^{-1}]'  \left(\mbox{linear operator}(g)\right),$$
where the \textit{linear operator} above describes how the element $g$ acts upon the observables of the market  that we use to describe  (through their moments)   the set $\qu$.
The so-called  dual of a dual problem may in many practical situations be easier to solve than the original one; for instance,   it is finite-dimensional if $\qu$ is specified by finitely many constraints.

The remainder of the paper is organized as follows. We start  Section \ref{dualidad} describing the mathematical framework of the  robust optimization problem in continuous-time financial markets following \cite{SchiedWu} and introduce the basic notation required throughout. Then in Section \ref {motivgen} we will  state  our main results about incomplete markets, in \ref{comparison} we compare them with the existing literature and finally in \ref{maincomplete} state our specialized results for complete markets. In Section \ref{OMtopo} we recall some known properties of Orlicz-Musielak spaces and in \ref{secapli} provide results of our own connecting them to our robust problem. Our main results on the robust optimization problem in the complete case are then established and proven in Section \ref{worstcasecomplete}.  In Section \ref{incompletecase}
we introduce the modular spaces associated with the incomplete case and study them extensively. Both Sections \ref{worstcasecomplete} and \ref{incompletecase} are independent of each other, and the reader can skip either of them depending on which result he/she is interested in. Finally, some technical facts are proved in the Appendix.

\section{Preliminaries and statement of main results }
\label{dualidad}

We will work {in a similar setting} as \cite{SchiedWu,KrSch}. Let there be $d$ stocks and a bond, normalized to one for simplicity. Let $S=\left(S^i\right)_{1 \leq i \leq d}$ be the price process of these stocks, and $T<\infty$ a finite investment horizon. The process $S$ is assumed to be a  semimartingale in a filtered probability space $(\Omega,\efe,(\F_t)_{t\leq T},\R)$, where  $\R$ will always stand for the \textit{reference measure}. The expectation with respect to $\R$ will be denoted by  $\E$. The set of all probability measures on $(\Omega,\efe)$ absolutely continuous w.r.t $\R$ will be denoted by ${\cal P}$, and  the expectation with respect to $\Q\in {\cal P} \backslash\{ \R\}$ will be expressed by $\E^{\Q}$.
\medskip

A (self-financing) portfolio $\pi$ is defined as a couple $(X_0,H)$, where $X_0\geq 0$ denotes the (constant) initial value associated to it and $H=(H^i)_{i=1}^d$ is a  predictable and $S$-integrable process  which represents  the number of shares of each type under possession. The wealth associated to a portfolio $\pi$ is  the process $X=(X_t)_{t\leq T}$ given by
\begin{equation}
X_t=X_0+\int_0^t H_u dS_u \label{riqueza}
\end{equation}
and the  set of attainable wealths from $x$ is defined as
\begin{equation}
\mathcal{X}(x)=\left\{ X\geq 0: X \mbox{ as in } \eqref{riqueza}\mbox{ s.t. } X_0 \leq x \right\}. \label{admisibles}
\end{equation}
The set of equivalent local martingale measures (or risk neutral measures) associated to $S$ is 
\begin{equation}
\mathcal{M}^e(S)=\left\{\Prob^* \sim \R:\mbox{ every } X \in \mathcal{X}(1) \mbox{ is a }\Prob^*\mbox{-local martingale}  \right\} \label{me}
\end{equation}
which reduces to \begin{equation*}
\mathcal{M}^e(S)=\left\{\Prob^* \sim \R:S \mbox{ is a }\Prob^*\mbox{-local martingale}  \right\} 
\end{equation*}
if $S$ is locally bounded. This is assumed in all the sequel,  together with the fact that the market is \textit{arbitrage-free} in the sense of NFLVR, meaning that $\mathcal{M}^e(S)$ is not empty. \\
As usual the market model is coined \textit{complete} if $\mathcal{M}^e(S)$  is reduced to a singleton, i.e. $\mathcal{M}^e(S) =\{\Prob^*\}$. 
Given  $\Q\in {\cal P}$, the following set   generalizes the set of density processes (with respect to  $\Q$)  of  risk neutral measures equivalent to it:
\begin{equation*}
\mathcal{Y}_{\Q}(y):= \left\{Y \geq 0| Y_0=y \mbox{ , } XY \mbox{ is } \Q-\mbox{supermartingale } \forall X \in \mathcal{X}(1)  \right\}. \label{YQ}
\end{equation*} 
 Introduced in \cite{KrSch},     $\mathcal{Y}_{\Q}(y)$ plays a central role in portfolio optimization in incomplete markets. 
   
   \medskip 
   
\begin{definition} \label{INADA}
A function $U:(0,\infty)\rightarrow \RR$ is called a \textit{utility function on} $(0,+\infty)$, if it is strictly increasing, strictly concave and continuously differentiable. It will be said to satisfy \textit{INADA} if $$U'(0+)=\infty \mbox{ and }U'(+\infty)=0\, . $$ Such a function $U$ is always extended  as $-\infty$ on $(-\infty,0)$.  Its  \textit{asymptotic elasticity}, introduced in \cite{KrSch}, is defined as $AE(U):= \limsup_{x \rightarrow \infty} \frac{xU'(x)}{U(x)}.$ 
Last, if   $\Delta:=\lim_{x\to+\infty}U(x)<\infty$,  we set $U^{-1}=+\infty$ on $[\Delta,\infty)$.

\end{definition}

Suppose  now that an agent aims to optimize the utility $U$ of her final wealth, by investing during a time interval $[0,T]$ in a market  which might be described by more than one probabilistic model (the actual or more accurate one being unknown to her).  Let $\qu\subset {\cal P}$ be a set of  feasible  probability measures on  $(\Omega,\efe,(\F_t)_{t\leq T},\R)$ representing the mentioned ambiguity or uncertainty. We shall refer to such a set as the \textit{uncertainty set} from here on. A common paradigm is that  the agent tries to maximize the worst-case expected utility given the set of models under consideration,  by solving 
the optimization problem 
\begin{equation}\label{RobPortOpt}
\sup_{X \in \mathcal{X}(x)} \inf_{\Q \in \qu} \E^{\Q}\left(U\left(X_T\right)\right),
\end{equation}
(a suitable  meaning can  be given to  the expectation in case $U$ is unbounded). Throughout the present work it will be  assumed that $\qu$ contains only probability measures that are absolutely continuous with respect to $\R$.  We will write 
$$\qu_e := \{\Q \in \qu|\Q \sim \R\}$$
and respectively denote by 
$$\frac{d\qu}{d\R}:=
\left\{ \frac{d\Q}{d\R}: \Q \in \qu \right\}\, , \quad
\frac{d\qu_e}{d\R}:=
\left\{ \frac{d\Q}{d\R}: \Q \in \qu_e \right\}=\left\{ \frac{d\Q}{d\R}\in  \frac{d\qu}{d\R} : \frac{d\Q}{d\R}>0 \mbox{ a.s. } \right\}.$$
  the set of densities  with respect to $\R$  of the elements of $\qu$ and $\qu_e$.
As in the standard, non-robust, setting (see \cite{Pham} for general background), the   dual formulation of  the  optimization problem \eqref{RobPortOpt}  will make use of the  conjugate function of $U$,  given by
\begin{equation*}
V(y):=\sup_{x>0}[U(x)-xy] \mbox{  }\forall y>0 \label{V} \\
\end{equation*} 
(actually the Fenchel conjugate of $-U(-\cdot)$). The following functions  commonly used in the literature to tackle problem   \eqref{RobPortOpt}, will also be relevant  here:
 \begin{align}
u(x)&=\sup_{X \in \mathcal{X}(x)}\inf_{\Q \in \qu} \E^{\Q}\left(U\left(X_T\right)\right) &
u_{\Q}(x)&=\sup_{X \in \mathcal{X}(x)} \E^{\Q}\left(U\left(X_T\right)\right), \notag\\
v_{\Q}(y)&= \inf_{ Y \in \mathcal{Y}_{\Q}(y)}\E^{\Q}\left(V\left(Y_T \right) \right) &
v(y)&= \inf_{\Q \in \qu_e} v_{\Q}(y)  	. \label{v}
\end{align}
Of course,  $u_{\Q}(x)$ is the investor's \textit{subjective} utility   under model $\Q\in\qu_e$, when starting from an initial wealth not larger that $x> 0$, whereas $u(x)$  is her robust utility.
The function $x\mapsto u_{\Q}(x)$ is concave (as an easy check shows),  so that $u_{\Q}(x_0)<+\infty$ at some $x_0>0$  for some given $\Q\in \qu$ implies $u_{\Q}<+\infty$ and  then,  $u<+\infty$, by the usual min-max inequality.

For a fixed $\Q\in \qu_e$ it was proven in Theorem 3.1 of \cite{KrSch}  that $u_{\Q}$ and $v_{\Q}$ are conjugate:
  \begin{equation}\label{uQvQ}
u_{\Q}(x)= \inf_{y>0} \left(v_{\Q}(y)+xy \right)\,\, \,\,\,\,\,\,\mbox{   and    } \,\,\,\,\,\,\,\,v_{\Q}(y)= \sup_{x>0} \left(u_{\Q}(x)-xy \right)
 \end{equation}
whenever     $u_{\Q}$ is finite. Hence, since the inequalities 
 \begin{equation}\label{candconj}
 \begin{split}
u(x)\leq &  \inf_{y>0} \left(   \inf_{\Q \in \qu} \inf_{ Y \in \mathcal{Y}_{\Q}(y)}\E^{\Q}\left(V\left(Y_T \right) \right)    +xy \right)\\
\leq & \inf_{y>0} \left(   \inf_{\Q \in \qu_e} \inf_{ Y \in \mathcal{Y}_{\Q}(y)}\E^{\Q}\left(V\left(Y_T \right) \right)    +xy \right) = \inf_{y>0} \left( v(y)  +xy \right) \\
\end{split}
  \end{equation}
 always hold, the function $v$  can be considered as  a candidate conjugate of $u$. 

We will  denote in the sequel  by $L^0=L^0(\Omega,\R)$  the space of measurable functions equipped with the  topology of convergence in probability, and by $L^0_+\subset L^0$ the cone of non-negative functions therein. We shall also write $$\Y:=\Y_{\R}(1),$$ and we will often use $Y$ instead of $Y_T$, which should be clear from context.  In order to state the  assumptions that will hold throughout this work we will also need the  subset
\begin{equation}\label{Yutil}
\Y^*:=\{Y\in\Y:Y>0 \mbox{ a.s. and }\forall \beta>0, \E[V(\beta Y)]<\infty\}.
\end{equation}

\subsection{Main results in general  markets }
\label{motivgen}

We start noting that for every $\Q \in \qu_e$, we have $\mathcal{Y}_{\Q}(y) = \left\{\frac{yY}{Z^{\Q}}: Y \in \mathcal{Y}_{\R}(1) \right\}$, where $Z^{\Q}$ is the density process of $\Q$ w.r.t.\ $\R$, hence%
\begin{equation}\label{exprv1}
v(y)=  \inf_{\Q \in \qu_e} \inf_{Y \in \mathcal{Y}_{\R}(1)} \E^{\R}\left[ \frac{d\Q}{d\R} V\left(y Y_T \left[\frac{d\Q}{d\R}\right]^{-1}\right) \right] .
\end{equation}
Thus, if $v$ is to be finite at some point $y>0$, the only measures $\Q$ that matter in \eqref{exprv1}  are those such that, for some  $Y \in \mathcal{Y}_{\R}(1)$,    $$ \E^{\R}\left[ \frac{d\Q}{d\R} V\left(y Y_T \left[\frac{d\Q}{d\R}\right]^{-1}\right) \right] <\infty.$$

This motivates us to restrict from the outset the set $\qu$ to consist of measures $\Q$  for which $ \frac{d\Q}{d\R}$ is in the space   of measurable functions
 \begin{equation*}
	L_I\,\,: =\,\,  \left\{ Z\in L^0 \mbox{ s.t. } \exists \alpha >0,  \inf_{Y\in \Y} \E[\vert Z \vert V(Y/ (\alpha \vert Z\vert)] < \infty \right\}\, \,= \, \,  \bigcup\limits_{Y\in \Y} L_{   | \cdot| V 	\circ Y/ | \cdot | }\, ,
 \end{equation*}
where for every $Y\in \Y$ we define:
    $$L_{   | \cdot| V 	\circ Y/ | \cdot |  }:= \left\{Z\in L^0 \mbox{ s.t. } \exists \alpha >0,
\E^{\R}\left[ | Z|V(Y/( \alpha | Z| ) )\right]  < \infty \right\}.
 $$ 

We will see in Section  \ref{Orlicz} that the function
 $z\mapsto |z| V ( Y/ | z |) $  is a.s.\  non-negative and convex and that   $L_{   | \cdot |   V 	\circ Y/ | \cdot |  }$  turns out to be an Orlicz-Musielak space (see Remark \ref{dualKoz}) for each  $Y\in \Y^*$.   In particular, it is  a  Banach space with  the  adequate norms; properties of  these spaces (which can be seen as Orlicz spaces based on ``random Young functionals'')  will be recalled in Theorem \ref{variasobsorlicz}. The convex conjugate of  $   | \cdot |  V 	\circ Y/ | \cdot | $ will be shown in Lemma \ref{lemagama} to be the function   $ Y U^{-1}  \circ| \cdot|$, and  it will play a pre-eminent role, as will do the associated Orlicz-Musielak space 
   $$L_{YU^{-1}  \circ| \cdot| }\,\,:= \,\,\left\{Z\in L^0 \mbox{ s.t. } \exists \alpha >0,
\E\left[Y U^{-1} \left( \alpha | Z | \right) \right]  < \infty \right\}.$$

The following assumption will be relevant in the study of  topological duality between  the spaces   $L_{   | \cdot |   V 	\circ Y/ | \cdot |  }$ and $L_{YU^{-1}  \circ| \cdot| }$. It is not assumed to hold, unless specifically stated: 
 \medskip
\begin{assumption}
$ $
\label{Usuposdelta} For some constants $a,b,k,d> 0$, the convex  functions $V(\cdot) $ and $U^{-1}(\cdot)$ on $(0,\infty)$ satisfy for all $y>0$:
\begin{align} \label{Vdelta}
V(y/2) &\leq a V(y)+b(y+1)  \\
U^{-1}(2y) &\leq k U^{-1}(y)+d . \label{U-1delta}
\end{align}
\end{assumption}
In  the jargon of Orlicz space theory (see e.g. \cite{RaoRen}), Assumption \ref{Usuposdelta} correspond  to ``$\Delta_2$  and $\nabla_2$''  conditions on the  Young function $ | \cdot |   V 	\circ 1/ | \cdot |  $.  As pointed out in Theorem \ref{OMreflex}, Assumption \ref{Usuposdelta} implies  reflexivity of $L_{   | \cdot |   V 	\circ Y/ | \cdot |  }$ and $L_{YU^{-1}  \circ| \cdot| }$, and is necessary for the latter if $\Prob$ is atomless.

 The space   $L_I$  will be endowed with a suitable Banach space topology  called Modular Space topology which  generalizes  the Orlicz-Musielak one  (see Section \ref{mod intro})  and   tightly harmonizes with our optimization problem. The search for verifiable conditions on the function $U$ that may render the space $L_I$ to be tractable will lead us to introduce the   space 
\begin{equation*}
 	L_J: =  \left\{ Z\in L^0 \mbox{ s.t. } \exists \alpha >0,\;  \sup_{Y\in \Y} \E\left[Y U^{-1} \left( \alpha | Z | \right) \right] < \infty \right\}\subseteq   \bigcap\limits_{Y\in \Y} L_{YU^{-1}  \circ| \cdot| } .
	 \end{equation*}
	 	 In general $L_J$ is included in the algebraic dual of $L_I$. Under Assumption \ref{assgnral} below $L_J$ is actually included in the topological dual of $L_I$ (cf.\ Proposition \ref{Hoelder}), and if additionally \eqref{Vdelta} holds, the latter space and  $L_J$ will be precisely isometric isomorphic (cf.\ Proposition \ref{rep}). Denoting  by  $\sigma(L_I,L_J)$ the  weak topology on $L_I$ induced by $L_J$, we now  state  our main hypothesis:

 \medskip
 
\begin{assumption}\label{assgnral}
\begin{enumerate}
\item $U$ is a utility function    on $(0,\infty)$ satisfying INADA and  such that $U(0+)=0$. 
\item The set $\Y^*$ is a non-empty subset of $\Y_{\R}(1)$. 
\item Regarding $\qu$ we assume:
\begin{itemize}
\item[(a)] $\qu$ is countably convex.
\item[(b)] $[\R(A)=0 \iff \forall \Q \in \qu,\Q(A)=0]$.
\item[(c)] $d\qu/d\R$ is a non-empty $\sigma(L_I,L_J)$-closed subset of $L_I$
\item [(d)]$\exists x>0,\Q\in\qu_e$ such that $u_{\Q}(x)<\infty$
\end{itemize}
\end{enumerate}
\end{assumption}
\medskip

Exploiting a certain compactness of the image under $U$ of the terminal wealths, as elements in $L_J$,  our main result for general markets, proved  in Section \ref{subsecmain},  will establish  the   minimax equality and the existence of optimal strategies: 
\medskip

\begin{theorem}
\leavevmode 
\label{minimaxsinreflexividad}
Suppose  Assumption \ref{assgnral} holds. Assume moreover that $L_I^*\cong L_J$, which is true as soon as \eqref{Vdelta} in Assumption \ref{Usuposdelta} additionally holds and in particular if $AE(U)<1$. Then for every $x>0$: 
\begin{eqnarray}
u(x)&=\inf_{\Q \in \qu} \sup_{X \in \mathcal{X}(x)} \E^{\Q}\left(U\left(X_T\right)\right) &= \inf_{\Q \in \qu}  \E^{\Q}\left(U\left(\hat{X}_T\right)\right) 
\notag\\
 &=\inf_{\Q \in \qu_e} \sup_{X \in \mathcal{X}(x)} \E^{\Q}\left(U\left(X_T\right)\right)&<+\infty, \label{igualdades}
\end{eqnarray} 
for some $\hat{X}\in \mathcal{X}(x)$. Moreover $v$ is finite and $u,v$ are conjugate on $(0,\infty)$.\\
Furthermore if $L_I$ is reflexive, which happens as soon as the market is complete and the full Assumption \ref{Usuposdelta} holds, then there is a saddle point, i.e.\ there exists  a unique $\hat{\Q}\in\qu$ so that all the  values in \eqref{igualdades} equal to $\E^{\hat{\Q}}\left[U\left(\hat{X}_T\right)\right]$.
\end{theorem}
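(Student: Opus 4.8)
The plan is to recast \eqref{RobPortOpt} as a bilinear minimax over the dual pair $(L_I,L_J)$. Set $\mathcal{K}:=\{U(X_T):X\in\mathcal{X}(x)\}$. Since $U(0+)=0$ forces $U\ge 0$ and every terminal wealth is nonnegative, $\mathcal{K}$ is a convex subset of $L_J$ (indeed, for $\alpha\le 1$ and $Y\in\Y$ one has $U^{-1}(\alpha U(X_T))\le X_T$, so $\E[YU^{-1}(\alpha U(X_T))]\le\E[YX_T]\le x$, the last step by the supermartingale property, whence $U(X_T)\in L_J$), and by the analysis of Section~\ref{secapli} it is $\sigma(L_J,L_I)$-compact; as $L_J\cong L_I^*$ this is the weak-$*$ topology. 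For $Z=d\Q/d\R\in d\qu/d\R\subset L_I$ and $K=U(X_T)\in\mathcal{K}$ the canonical pairing is $\langle Z,K\rangle=\E[ZK]=\E^{\Q}[U(X_T)]$, so that $u(x)=\sup_{K\in\mathcal{K}}\inf_{Z\in d\qu/d\R}\langle Z,K\rangle$. This pairing is affine and weak-$*$ continuous in $K$ for each fixed $Z\in L_I$, and affine and $\sigma(L_I,L_J)$-continuous in $Z$ for each fixed $K\in L_J$.

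First I would invoke Sion's minimax theorem on the convex sets $\mathcal{K}$ (weak-$*$ compact) and $d\qu/d\R$ (convex, by the countable convexity in Assumption~\ref{assgnral}), using the concavity/u.s.c.\ in $K$ and convexity/l.s.c.\ in $Z$ just noted. This gives $u(x)=\inf_{Z}\sup_{K}\langle Z,K\rangle=\inf_{\Q\in\qu}u_{\Q}(x)$, the first equality in \eqref{igualdades}. Since $K\mapsto\inf_{Z}\langle Z,K\rangle$ is an infimum of weak-$*$ continuous maps, it is weak-$*$ upper semicontinuous and thus attains its maximum over the compact set $\mathcal{K}$ at some $\hat K=U(\hat X_T)$, $\hat X\in\mathcal{X}(x)$, yielding $u(x)=\inf_{\Q\in\qu}\E^{\Q}[U(\hat X_T)]$. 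To pass to $\qu_e$, fix $\Q_0\in\qu_e$ (it exists by Assumption~\ref{assgnral}) and note that for any $\Q\in\qu$ the combinations $\Q_\varepsilon=(1-\varepsilon)\Q+\varepsilon\Q_0\in\qu_e$ satisfy $\langle Z_{\Q_\varepsilon},K\rangle\to\langle Z_{\Q},K\rangle$ as $\varepsilon\downarrow 0$ for every $K\in L_J$; hence $\inf_{Z\in d\qu_e/d\R}\langle\,\cdot\,,K\rangle=\inf_{Z\in d\qu/d\R}\langle\,\cdot\,,K\rangle$ pointwise, and taking the supremum over $\mathcal{K}$ together with a second application of Sion (to the convex set $d\qu_e/d\R$) shows that all three infima in \eqref{igualdades} agree. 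Finiteness is clear from $u(x)\le u_{\Q_0}(x)<\infty$.

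For the conjugacy I would import the non-robust relation \eqref{uQvQ}. Weak duality gives $u_{\Q}(x)\le\inf_{y>0}(v_{\Q}(y)+xy)$ for every $\Q\in\qu_e$, and \eqref{uQvQ} turns this into an equality whenever $u_{\Q}(x)<\infty$ (the infinite case being trivial), so $u_{\Q}(x)=\inf_{y>0}(v_{\Q}(y)+xy)$ throughout $\qu_e$. Interchanging the two infima and recalling $v(y)=\inf_{\Q\in\qu_e}v_{\Q}(y)$ then gives $u(x)=\inf_{\Q\in\qu_e}\inf_{y>0}(v_{\Q}(y)+xy)=\inf_{y>0}(v(y)+xy)$, which is the reverse of \eqref{candconj}; consequently $u$ and $v$ are conjugate on $(0,\infty)$. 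Moreover $V\ge 0$ gives $v\ge 0$, while $v(y)\le v_{\Q_0}(y)<\infty$, so $v$ is finite.

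Finally, assume $L_I$ reflexive, which by the remarks around Theorem~\ref{triste} is essentially the complete-market case under the full Assumption~\ref{Usuposdelta}. Let $\hat y>0$ attain $u(x)=\inf_{y>0}(v(y)+xy)$. I would produce the worst-case measure as a minimizer over $d\qu/d\R$ of the convex functional $F(Z):=\E[Z\,V(\hat y\,Y^{\ast}_T/Z)]$, where $Y^{\ast}$ is the density of the unique martingale measure and where the infimum over $\Y$ defining $v_{\Q}(\hat y)$ is attained at $Y^{\ast}$, so that $F(d\Q/d\R)=v_{\Q}(\hat y)$. The point is that $F$ differs from the convex modular generating the norm of $L_I$ only through the scaling $\hat y$, so its sublevel sets meet the $\sigma(L_I,L_J)$-closed convex set $d\qu/d\R$ (Assumption~\ref{assgnral}) in $L_I$-norm-bounded, hence (by reflexivity) relatively $\sigma(L_I,L_J)$-compact, sets. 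A minimizing net thus clusters at some $\hat\Q\in\qu$ attaining the infimum, by weak lower semicontinuity of $F$; strict convexity of $V$ makes $F$ strictly convex in $Z$, so $\hat\Q$ is unique. One checks $u_{\hat\Q}(x)=F(d\hat\Q/d\R)+x\hat y=v(\hat y)+x\hat y=u(x)$, so $\hat\Q$ also realizes $\inf_{\Q\in\qu}u_{\Q}(x)$; combined with the maximizer $\hat X$ and the minimax equality this makes $(\hat X,\hat\Q)$ a saddle point, and chasing $u(x)=\inf_{\Q}\E^{\Q}[U(\hat X_T)]\le\E^{\hat\Q}[U(\hat X_T)]\le u_{\hat\Q}(x)=u(x)$ shows that every value in \eqref{igualdades} equals $\E^{\hat\Q}[U(\hat X_T)]$. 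I expect the genuine obstacle to be the weak-$*$ compactness of $\mathcal{K}$, which I take from Section~\ref{secapli}; granting it, the remaining delicate points are the approximation transferring $\qu$ to $\qu_e$ and the identification of $F$ with the modular that provides the boundedness needed to attain $\hat\Q$.
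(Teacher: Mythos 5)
There is a genuine gap, and it sits exactly where you flag your own uncertainty: the convexity and weak-$*$ compactness of $\mathcal{K}=\{U(X_T):X\in\mathcal{X}(x)\}$. You assert convexity outright (your parenthetical only proves membership $U(X_T)\in L_J$) and take compactness ``from Section \ref{secapli}'', but neither property holds for $\mathcal{K}$ itself in an incomplete market, nor does the paper claim it does. Since $U$ is strictly concave and the set of attainable terminal wealths is not solid, one only has $\lambda U(X^1_T)+(1-\lambda)U(X^2_T)\le U\bigl(\lambda X^1_T+(1-\lambda)X^2_T\bigr)$, typically with strict inequality, and the left-hand side is in general not of the form $U(X_T)$ for any $X\in\mathcal{X}(x)$; similarly, a $\sigma(L_J,L_I)$-limit of elements $U(X^n_T)$ is in general only \emph{dominated} by such an element. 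So Sion's theorem cannot be applied to $\mathcal{K}$, and the existence of the maximizer $\hat K=U(\hat X_T)$ does not follow. The paper's proof instead works with the solid convex set $G:=\{g\in L_J: 0\le g\le U(X_T)\mbox{ for some } X\in\mathcal{X}(x)\}$, whose convexity is exactly what concavity of $U$ buys; its $\sigma(L_J,E_I)$-closedness follows from $J=I^*$ being weak-$*$ l.s.c.\ (Proposition \ref{rep}) combined with the superreplication characterization $J(g)\le x\iff U^{-1}(|g|)\le X_T$ of Lemma \ref{Iconvex}, and compactness from the norm bound $|g|^a_J\le 1+J(g)\le 1+x$ plus Banach--Alaoglu. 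The minimax (the paper uses the lopsided theorem of Aubin--Ekeland; Sion would serve equally well here) is then run on $G$ --- harmless, since densities are nonnegative so the suprema over $G$ and over $\mathcal{K}$ coincide --- and the optimizer $\hat g\in G$ is upgraded \emph{a posteriori} to the form $U(\hat X_T)$ by the argument of Lemma 3.4 in \cite{SchiedWu}: pick $X\in\mathcal{X}(x)$ superreplicating $U^{-1}(\hat g)$ and replace $\hat g$ by $U(X_T)\ge\hat g$. Your proposal needs this detour; as written, both load-bearing properties of $\mathcal{K}$ fail.

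The remainder of your plan is essentially sound and close to the paper's, modulo two patchable shortcuts. First, deducing that ``$u$ and $v$ are conjugate'' from $u(x)=\inf_{y>0}(v(y)+xy)$ alone is incomplete: biconjugation requires $v$ to be convex and l.s.c., which the paper establishes via the joint convexity of $(y,Z)\mapsto yI(Z/y)$ together with continuity of $v$ on $(0,\infty)$ and $v(0+)=\infty$; likewise your finiteness claim $v_{\Q_0}(y)<\infty$ for all $y>0$ rests on $E_I=L_I$, which should be said. Second, your saddle-point construction in the reflexive case genuinely differs from the paper's: the paper restricts to $\qu\cap\{\Q:u_{\Q}(x)\le u(x)+1\}$, which is norm-bounded by the coercivity estimate of Proposition \ref{coercividad2}, hence weakly compact by reflexivity and Assumption \ref{assgnral}(c), and minimizes the weakly continuous \emph{linear} functional $Z\mapsto\E[Z\,U(\hat X_T)]$ there; you instead minimize the strictly convex dual functional $F(Z)=\E[ZV(\hat y/Z)]$ at an optimal $\hat y$. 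Your route is workable and delivers uniqueness more transparently than the paper's proof does, but it additionally requires the existence of $\hat y$ attaining $\inf_{y>0}(v(y)+xy)$, which you should justify (it follows from $v\ge 0$, $v(0+)=\infty$ and lower semicontinuity). None of this rescues the first paragraph's issue, which is the one that breaks the argument as stated.
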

\medskip

Our second main result  in the setting of incomplete markets, however, is of a negative kind. It states that reflexivity of $L_I$ is virtually impossible in most strict incomplete market models, independently of how good the utility functions is. This is quite remarkable since it implies that the route, through reflexivity, to establish the existence of a saddle point when the set $\qu$ is only weakly closed in $L_I$ (as by the end of the previous theorem), is feasible if and only if the market is complete to begin with:

\medskip

\begin{theorem}\label{tristeintro} 
Under parts 1.\ and 2.\ of Assumption \ref{assgnral}, if the set $\Y$ is not uniformly integrable, then $L_I$ cannot be reflexive.
\end{theorem}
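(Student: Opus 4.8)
The plan is to derive non-reflexivity from the classical Dunford--Pettis theorem, by showing that reflexivity of $L_I$ would force $\Y$ to be relatively weakly compact in $L^1(\R)$, hence uniformly integrable, contradicting the hypothesis. Concretely, I would argue by contradiction: assume $L_I$ is reflexive and establish two facts, namely (i) $\Y$ is a norm-bounded subset of $L_I$, and (ii) the inclusion $\iota\colon L_I\hookrightarrow L^1(\R)$ is continuous. Granting these, boundedness together with reflexivity (Kakutani) makes $\Y$ relatively $\sigma(L_I,L_I^*)$-compact; since any bounded operator is weak-to-weak continuous, $\iota(\Y)=\Y$ is then relatively $\sigma(L^1,L^\infty)$-compact. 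As $\Y$ is bounded in $L^1$ --- because the constant wealth $1\in\X(1)$ forces $Y=Y\cdot 1$ to be an $\R$-supermartingale, so $\E[Y]\le 1$ for every $Y\in\Y$ --- the Dunford--Pettis theorem yields uniform integrability of $\Y$, the desired contradiction.

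For (i) I would test the modular defining $L_I$ against the element $Y$ itself. Using $Y\in\Y$ as a competitor in the infimum over $\Y$, for any $\lambda>0$ one has $\E[(Y/\lambda)\,V(Y/(Y/\lambda))]=\tfrac{V(\lambda)}{\lambda}\E[Y]\le \tfrac{V(\lambda)}{\lambda}$, so the $L_I$-modular of $Y/\lambda$ is at most $V(\lambda)/\lambda$. By INADA (Definition \ref{INADA}) together with $U(0+)=0$, the conjugate $V$ is finite on $(0,\infty)$ and $V(\lambda)\to 0$ as $\lambda\to\infty$ (its maximizer tends to $0$, whence $V(\lambda)=U(x_\lambda)-x_\lambda\lambda\to 0$); hence there is a single $\lambda_0$, independent of $Y$, with $V(\lambda_0)/\lambda_0\le 1$, giving $\|Y\|_{L_I}\le\lambda_0$ for all $Y\in\Y$. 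The only care needed here is to match this computation with the precise modular-space norm of Section \ref{incompletecase}, but since that norm is dominated, for each individual $Y\in\Y$, by the Orlicz-type modular $Z\mapsto\E[|Z|\,V(Y/|Z|)]$, the bound goes through.

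For (ii) I would first note that the constant function $1$ lies in $L_J$: indeed $\sup_{Y\in\Y}\E[Y\,U^{-1}(\alpha)]=U^{-1}(\alpha)\sup_{Y}\E[Y]\le U^{-1}(\alpha)<\infty$ for $\alpha>0$ small enough that $U^{-1}(\alpha)<\infty$ (possible since $\Delta>0$). Since $L_J$ is always contained in the algebraic dual of $L_I$, the functional $Z\mapsto\E[Z]$ is well defined and finite on $L_I$, and because membership in $L_I$ depends only on $|Z|$, we get $\E[|Z|]<\infty$ for every $Z\in L_I$, i.e.\ $L_I\subseteq L^1(\R)$ as sets. Continuity of this inclusion I would then obtain from the closed graph theorem: if $Z_n\to Z$ in $L_I$ and $Z_n\to W$ in $L^1$, both convergences imply convergence in probability (norm convergence in a modular function space dominates convergence in measure), whence $Z=W$ a.s.; both spaces being Banach, $\iota$ is continuous. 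This route uses only parts 1.\ and 2.\ of Assumption \ref{assgnral} and sidesteps the stronger topological duality of Proposition \ref{Hoelder}, which requires the full assumption.

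I expect the main obstacle to be step (ii), specifically the verification that $L_I$-norm convergence implies convergence in probability and hence that the expectation functional is genuinely continuous under the minimal hypotheses. The closed graph argument circumvents the need for an explicit Hölder estimate, but it relies on $L_I$ having been set up as a bona fide Banach space continuously embedded in $L^0$, which is precisely the content developed in Section \ref{incompletecase}. Once (i) and (ii) are in place, the reflexivity/Dunford--Pettis chain closes the argument at once, and the contrapositive delivers the statement: if $\Y$ fails to be uniformly integrable, $L_I$ cannot be reflexive.
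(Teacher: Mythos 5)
Your proof is correct, but it takes a genuinely different route from the paper's. The paper argues on the dual side through Banach lattice theory: it first shows (Lemma \ref{propUI}, via disjoint sequences $\ind{A_n}$ and a result of Diestel) that order continuity of the norm of $E_J$ forces uniform integrability of $\Y$; since a reflexive Banach lattice has order-continuous norm, $E_J$ is not reflexive, hence neither is $L_J$ (closed subspace), hence neither is $E_I$ (whose dual is $L_J$ by the lengthy Proposition \ref{rep}), hence neither is $L_I$ (of which $E_I$ is a closed subspace by Proposition \ref{lengthy}). You instead work directly on the primal side: your key observation --- that testing the infimum defining $I$ against the competitor $Y$ itself gives $I(Y/\lambda)\leq \frac{V(\lambda)}{\lambda}\E[Y]\leq \frac{V(\lambda)}{\lambda}$, so that $\Y$ is a \emph{norm-bounded} subset of $L_I$ --- does not appear in the paper, and combined with the continuous embedding $L_I\hookrightarrow L^1$ and Dunford--Pettis it yields the contradiction at once. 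Your route is more elementary and self-contained (it bypasses Proposition \ref{rep} and all the order-continuity machinery), and since $V$ is finite on $(0,\infty)$ your bound actually gives $I(\alpha Y)\leq \alpha V(1/\alpha)<\infty$ for \emph{every} $\alpha>0$, i.e.\ $\Y\subset E_I$, so the same argument shows non-reflexivity of $E_I$ as well; what it does not recover is the paper's stronger conclusion that $E_J$ and $L_J$ are also non-reflexive. Two small points of care: (a) the Young-type inequality behind $L_I\subseteq L^1$ needs the pointwise conjugacy of $\eta_Y^*$ and $\eta_Y$, which holds a.s.\ only for strictly positive $Y$; this is where part 2.\ of Assumption \ref{assgnral} enters, through Lemma \ref{suposdensonec}(i), which lets you restrict the infimum in $I$ to $\Y^*$ (alternatively, handle $\{Y=0\}$ directly by taking $\alpha<\Delta$, since the inequality $\alpha|z|\leq |z|V(0)$ then holds). (b) Your closed-graph detour is sound but avoidable: the explicit estimate $\alpha\E[|Z|]\leq I(Z)+U^{-1}(\alpha)$ for small $\alpha>0$ gives the continuity of the inclusion directly, and in fact the inclusion part of Proposition \ref{Hoelder} already needs only parts 1.\ and 2.\ (only $1\in L_J$ and $1\in L_I$, the latter from $\Y^*\neq\emptyset$), so you could have cited it; your caution there was unnecessary but harmless.
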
 

\medskip 

As it shall be discussed in Section \ref{incompletecase},  in most reasonable strictly incomplete market models (for instance those involving the brownian filtration)  $\Y$ is indeed never uniformly integrable.

 In the complete case, in turn, $\Y$ has of course a maximal integrable element for the a.s.\ order   (see e.g.\ Lemma 4.3 in \cite{KrSch}) and therefore the previous result does not preclude reflexivity in that case. Indeed, in the complete case and under Assumption \ref{Usuposdelta}, one obtains from the proof of Theorem \ref{minimaxsinreflexividad} that $L_I$ is a reflexive Orlicz-Musielak  space and so, owing to the existence of a saddle point, one can be more specific about the solution to the robust optimization problem. This is done in Theorems 2.2.5 and Section 2.4.1 of the thesis \cite{JBtesis}, whereby the author relates the dual and primal optimizers as expected from e.g.\ \cite{SchiedWu}.  However, this is  not the point about complete markets we want to stress in this article; our main contribution in such case is that,  thanks to our functional analytical approach and the nice Orlicz  space structure it leads to in the complete setting, we are able to provide in a  systematic way 
  the characterization of  the saddle-point element $\hat{\Q}$,  that is, the \textit{worst} element in $\qu$ for each utility function.  
  
  Before detailing our specific results for complete markets in Section \ref{maincomplete}, let us discuss   our assumptions and the relationship between Theorem \ref{minimaxsinreflexividad} and  results in the existing literature.

\subsection{Discussion on Assumption \ref{assgnral} and comparison to the existing literature} 
\label{comparison}
$ $
\medskip

\noindent\textbf{On the utility function}: Condition $U(0+)=0$ was assumed as it implies the desirable property $V\geq 0$. If $U$ were bounded from below our results would still hold. The following are examples of utility functions for which our results apply:

\begin{example}
Power utilities \ $U(\cdot)=\alpha^{-1}(\cdot)^{\alpha}$, $\alpha\in (0,1)$ fulfill
 point 1.\ in Assumption \ref{assgnral}. Moreover, this assumption is satisfied if and only if    $U^{-1}$ is convex and increasing, $U^{-1}(0+)=0$, $[U^{-1}]'(0+)=0$ and $[U^{-1}]'\left(\lim_{x\to\infty}U(x)\right)=\infty$. So for instance the inverse on $[0,+\infty)$ of
$x\mapsto e^x-x-1$ satisfies it as well. Power utilities, as described, also satisfy Assumption 
\ref{Usuposdelta}.
\end{example}\medskip

\noindent\textbf{On the non-emptiness of $\Y^*$}  The reason behind point 2.\ of Assumption \ref{assgnral}  is two-fold. It ensures that the result in Remark \ref{dualKoz} stating that the Orlicz-Musielak spaces $L_{   | \cdot| V 	\circ Y/ | \cdot |  }$ and $ L_{YU^{-1}  \circ| \cdot| }$ are well-behaved whenever $Y\in\Y^*$, be lifted to the spaces $L_I$ and $L_J$. On the other hand, it precludes the combinations of market models and utility functions for which, even in the non-robust case, primal optimizers do not exist; we come back to this under the point ``Global comparison of our assumptions''.

\medskip

\noindent\textbf{On the topological constraint on $\qu$}:  Our point 3.(c) in Assumption \ref{assgnral}, specifically $d\qu/d\R\subset L_I$, implies that $\forall \Q\in\qu,\exists y>0,v_{\Q}(y)<\infty$. Under the assumption $L_I^*\cong L_J$  in our Theorem \ref{minimaxsinreflexividad}, we further have that $\forall \Q\in\qu,\forall y>0,v_{\Q}(y)<\infty$. This is a strenthening of Condition $(2.10)$ in \cite{SchiedWu}, which the authors there use to prove existence of optimal wealth processes. Furthermore, in Theorem 2.2.\ of \cite{SchiedWu} the authors succeed in proving conjugacy of the value functions without anything like our condition $d\qu/d\R\subset L_I$, but in turn suppose the stronger $L^0$-closedness condition,  typically assumed in the literature (see e.g.\ \cite{SchiedWu},\cite{FollGun}) and equivalent in the present context to weak $L^1$ compactness. By Proposition \ref{Hoelder} below, our weak-closedness condition for $d\qu/d\R$ is indeed implied  by  the usual closedness in $L^0$ and,
as the following example exhibits, the converse is not true. Furthermore, the same example shows that  in our setting  a ``least-favourable'' measure might not exist, contrary to the framework of \cite{Bau},\cite{Schied}:

\begin{example}\label{QnotcloL0}
Assume an investor knows or anticipates that the mean of a ${\cal F}_T-$measurable unbounded random variable $h$ (e.g. $h=S_T$) is bounded from below by a constant $A>0$. If   $\E(h)<\infty$, then the set of densities $\frac{d \qu_A}{d \Prob}$ of the set $\qu_A:= \{\Q \in {\cal P}: \Q \ll \Prob,   \, \E^{\Q}(h)\geq A\} $  is not closed in $L^0$.  Indeed, the sequence  $\Q^n(\cdot) : = \Prob(\cdot | h\geq nA)\in \qu_A $,  is  such that $ \frac{d \Q^n}{d \Prob}=\Prob( h\geq nA) ^{-1}\ind{\{  h\geq nA\}}\to 0$ in $L^0$ when $n\to \infty$, yet obviously $0 \notin \qu_A$. Consider now the utility function  $U(x)=\frac{x^{\alpha}}{\alpha}$, $\alpha \in (0,1)$, so that after some computations we see $  L_{  | \cdot |    V 	\circ 1/ | \cdot |  } =L^{\frac{1}{\alpha}}$,  and call $\tilde{\qu}_A:= \{\Q \in {\cal P}: \Q \ll \Prob,\, d\Q/d\Prob \in  L^{\frac{1}{\alpha}}, \,\E^{\Q}(h)\geq A\} $, which by the same argument is not closed in $L^0$. If however $h$ is an element of $L^{\frac{1}{1-\alpha}}$,  one can check that   $ \tilde{\qu}_A$ is a closed subset of $  L_{  | \cdot |    V 	\circ 1/ | \cdot |  }$.  Finally, it is not difficult to see with the aid of Lagrange multipliers and under given conditions on $h$ and $A$, that the solution of $\inf\{\E[Z^2]:Z\in d\tilde{\qu}_A/d\Prob\}$ is a linear function of $h$, whereas the solution of $\inf\{\E[Z^{3/2}]:Z\in d\tilde{\qu}_A/d\Prob\}$ is a quadratic function of $h$. In particular then $\tilde{\qu}_A$ has no least-favourable measure in the sense of e.g.\ \cite{Bau},\cite{Schied},\cite{FollSchie}.\end{example}  

Finally, since we cannot get countable convexity out of convexity with our weak-closure assumption, this condition has  been put in Assumption \ref{assgnral}. Had we required $\Q_e\neq\emptyset$ instead, we could have assumed usual convexity. Condition 3.(d) therein, which we add straight from the beginning,  is required in any case for all the results in the literature. 

\medskip 

\begin{remark}
Our motivation for the set $\qu$ comes from modeling concerns, namely we want to account for anticipation of moments of observables or insider information of statistical kind, rather than from axiomatic considerations. Regarding these, one could consider $Z\in L_J\mapsto \rho_{\qu}:=-\inf_{\Q\in\qu}\EE^{\Q}[-Z]$ as a coherent risk measure and survey its properties in terms of conditions on $\qu$, and conversely ask which coherent risk measures on $L_J$ can be obtained as $\rho_{\qu}$ for $\qu$ in some appealing class. In general the dual representation of a coherent risk measure via a not necessarily compact set is only equivalent to lower semicontinuity, and for nice lattices as our Modular spaces, to the Fatou property; see \cite{BiaFri} on the so-called C-property, which holds in our setting. If $ L_J$ were an Orlicz heart, one can find in \cite[Corollary 7]{OrihuelaGalaz} (resp.\ \cite[Corlollary 4.1]{Cheridito}) the equivalence between continuity from below of $\rho_{\qu}$ (resp.\ Lipschitz continuity) and weak compactness (resp.\ norm boundedness) in the dual space of $d\qu/d\Prob$. Along similar lines, one could ask what kind of preferences can be numerically represented via ``expected utility under multiple priors'', as in \cite{Gil} or \cite{Italians}, and further investigate the properties of this representations in terms of the set of priors $\qu$. 

\end{remark}
\medskip

\noindent\textbf{Global comparison of our assumptions}: Comparing our results with those in \cite{SchiedWu} and \cite{FollGun}, which we take as benchmark, we find that we must require stronger integrability of the elements in $d\qu/d\Prob$, better-behaved utility functions, and the more stringent dual finiteness condition $\Y^*\neq\emptyset$. If the set $\qu$ were closed in $L^0$ our assumptions would be then unnecessarily demanding. The point is, we prove  min-max equality and duality of value functions beyond the $L^0$-closedness assumption. 

We stress that Theorem \ref{minimaxsinreflexividad} can be applied to the classical, non-robust situation as well, asserting the existence of an optimal wealth process and the finiteness everywhere of the value function, under the assumptions that $U$ is a utility function on $(0,\infty)$ bounded from below and satisfying INADA, and that $ \Y^*\neq \emptyset$ (see \eqref{Yutil}). These conditions are not necessary for the existence of optimal wealths, and as sufficient conditions they are stronger than the one given in \cite{KrSch2} (namely finiteness of the dual value function). The point is, our modular space proof is purely functional-analytical (see Proposition 2.5.6 in \cite{JBtesis} for a self-contained proof not relying on Theorem \ref{minimaxsinreflexividad}) and the condition on $\Y^*$ precludes market models as in Example 5.2 in \cite{KrSch}, whereby the failure of $AE(U)<1$ implies non-existence of optimal wealths. This functional-analytical proof is only seemingly shorter or neater than the classical one (relying in convex-compactness of the solid hull of $\mathcal{X}(1)$), since it relies on the fact that $L_J$ is a norm-dual space (see Proposition \ref{rep}), which is lengthy to prove. Even taking this fact from granted, one still needs to use the bipolar theorem, which is in any case necessary to prove the mentioned property of the solid hull of $\mathcal{X}(1)$.

\medskip

\noindent\textbf{Related use of Orlicz spaces in the literature}: We compare our Modular space approach (Orlicz-Musielak space, in case of complete markets) to that of \cite{FollGun,Gundel}, which to the best of our knowledge are the only works where Orlicz spaces are used for the problem of robust utility maximization. In their setting the set $\qu$ is assumed weakly compact in $L^1$ and therefore, out of Valle-Poussin criterion, it is bounded in an Orlicz space induced by a well-behaved Young function. Starting from this the authors eventually prove that the set of martingale measure densities possibly contributing to the dual problem is also weakly compact in $L^1$, by means of constructing a second Young function. This establishes full dual attainability under the hypotheses given. We in turn look for compactness elsewhere, in the set of images of the terminal wealths through the utility function, and do not ask for compactness of $\qu$. We do establish that the relevant elements for the dual problem, say $\tilde{\qu}\subset\qu$, are bounded in the modular space (see Remark \ref{ilusion}), but this is a long way from implying that they form a weakly compact set in $L^1$. It is only for complete markets under Assumption \ref{Usuposdelta} and under the condition that $\Prob$ is a martingale measure, that our spaces become faintly comparable to those of \cite{FollGun}. In such case, as the proof of Lemma 2.11 or Remark 2.13 in \cite{FollGun} reveals (taking $f=g$ therein), the classical Orlicz space $L_{   | \cdot| V 	\circ 1/ | \cdot |  }$ we get has a strictly stronger topology than any of the Orlicz spaces introduced there, and cannot be expressed in terms of any of them.
 
\subsection{Main results in the complete case: characterization  of the worst-case measure}\label{maincomplete}
$  $
\medskip

If in the complete case   we denote by $Y^*$ the density of the unique equivalent martingale measure w.r.t.\ $\Prob$,  it is easy to see that the spaces $L_I$ and $ L_{   | \cdot |   V 	\circ Y^*/ | \cdot |  }$, defined in Section \ref{motivgen}, must coincide. We may thus assume without loss of generality  that the reference measure $\Prob$, whose single role is to specify null sets, is a martingale measure and so $Y^*\equiv 1$ (this does not trivialize our robust problem as measures in $\qu$  need not to be martingale measures even if $\Prob$ is). Under this assumption, Lemma 4.3 in \cite{KrSch} and its proof states that every terminal value of the elements $Y \in \mathcal{Y}_{\R}(1)$ is bounded by $1$ and (since $V$ is non-increasing) we have:
\begin{equation}\label{vcompleto}
v(y)=\inf\limits_{Z \in \frac{d\qu_e}{d\R}} \E\left[ZV\left(\frac{y}{Z} \right)\right].
\end{equation} 
By the same argument, the functions space $L_I$ we worked with in Section \ref{motivgen} is $ L_{   | \cdot |   V 	\circ 1/ | \cdot |  }$;  a classical Orlicz space. The space   $L_J$ corresponds accordingly to the Orlicz space $L_{U^{-1}  \circ| \cdot| }$. All in all, we can write:
\begin{equation}\label{dualcomplete}
v(y)= \inf \big\{  \E (\gamma^*_y(Z)) : \, Z\in   L_{   | \cdot |   V 	\circ 1/ | \cdot |  } \mbox{ s.t. }
\E^{\R}(Z)=1 \mbox{  and } Z \cdot d\R\in \qu \big\}, 
\end{equation}
where  $\gamma^*_y(\cdot)$  is the convex function which equals $z\mapsto z V\left(\frac{y}{z}\right)$ for $z>0$  and $+\infty$ otherwise
(see Lemma  \ref{lemagama} for further  properties of this  $\gamma^*_y$).
  
  \smallskip 
    
 We will assume  that the  set  $\qu$ is defined by the constraint that the under each element $\Q\in \qu$, the average value of a given observable of the market $\theta$, with values in a (possibly infinite dimensional) vector space, lies on a prescribed convex subset of this space.  More precisely, we will consider 
\begin{itemize}
\item [i)]   $(\mathbf{F}_0,\mathbf{G}_0)$    a pair of linear spaces  of arbitrary dimension, with $\mathbf{F}_0$ the algebraic dual of $\mathbf{G}_0$ and with dual product denoted
 $\langle \cdot, \cdot\rangle_{\mathbf{G}_0,\mathbf{F}_0}$.
  \item[ii)]   $\theta:\Omega\to  \mathbf{F}_0$   a function (or ``observable'') on the market with values in $\mathbf{F}_0$ and
 \item[iii)]   $\mathbf{C}_0\subset \mathbf{F}_0$   a convex subset .
\end{itemize}
\medskip

  In this setting, we will  characterize  the worst-case measure  $\hat{\Q}$    using techniques  for  the minimization of  abstract entropy  functionals,   developed   in the series of papers  \cite{Leo-ent,LeoAbstract,Minvery}.  Following those works we will   make
 \medskip
 
\begin{assumption}\label{suposQLeo}
$ $
\begin{itemize}
\item[i)] $\forall g \in \mathbf{G}_0$, the function $\omega\in\Omega \mapsto \langle g, \theta(\omega) \rangle_{\mathbf{G}_0,\mathbf{F}_0}$ is measurable.
\item[ii)]   $\forall g \in \mathbf{G}_0 , \left\langle g,\theta \right\rangle_{\mathbf{G}_0,\mathbf{F}_0} \in L_{U^{-1}  \circ| \cdot| }$.
\item[iii)] $\forall (g,a) \in \mathbf{G}_0\times \RR$, one has $\langle g, \theta(\cdot) \rangle_{\mathbf{G}_0,\mathbf{F}_0} = a  \, \R$ -a.s.  iff $g=0 $ and $ a=0$.
\item[iv)] We have $\qu \neq \emptyset $ and 
  \begin{equation*}
  \frac{d\qu}{d\R}  =\left\{ Z  \in   L_{   | \cdot |   V 	\circ 1/ | \cdot |  }: 	\,  Z\geq 0 \mbox{ a.s.,}  \, \EE(Z)=1  \mbox{ and }\Theta\left(Z\right) \in \mathbf{C}_0  \right\},
\end{equation*} 
where  $\Theta:    L_{   | \cdot |   V 	\circ 1/ | \cdot |  } \to \mathbf{F}_0$ denotes the linear   operator $\Theta(Z)= \int \theta Z\, d\R$ such that
$$\left\langle g,\Theta(Z) \right\rangle_{\mathbf{G}_0,\mathbf{F}_0} = \int_{\Omega} \left\langle g, \theta \right\rangle_{\mathbf{G}_0,\mathbf{F}_0} Z \, d\R, \, g\in \mathbf{G}_0.$$
\item[iv)] The market is complete and Assumption \ref{Usuposdelta} holds, so $ L_{   | \cdot |   V 	\circ 1/ | \cdot |  }$ is reflexive\item[v)]  Assumption \ref{assgnral} holds, in particular  $\frac{d\qu}{d\R} \subset L_{   | \cdot |   V 	\circ 1/ | \cdot |  }$ is    $\sigma( L_{   | \cdot |   V 	\circ 1/ | \cdot |  },   L_{U^{-1}  \circ| \cdot| })$- closed. 

\end{itemize}
\end{assumption}
\medskip

We observe that if point  (c) of Assumption \ref{assgnral}
holds and   Assumption \ref{Usuposdelta} on $U$ is  enforced, then  point  iv) of Assumption \ref{suposQLeo} is satisfied and one can write
 \begin{equation*}
   \frac{d\qu}{d\R} = \bigcap_{\lambda \in \Lambda} \left\{ \frac{d\Q}{d\R}   \, : \, \,  \frac{d\Q}{d\R} \in L_{   | \cdot |   V 	\circ 1/ | \cdot |  }  \mbox{  and } \E^{\Q} \left(h_{\lambda}
 \right)\in [a_{\lambda},\infty) \right\}
 \end{equation*}
for some family $(h_{\lambda})_{\lambda  \in \Lambda}$     of elements of $L_{U^{-1}  \circ| \cdot| }$ and  some $a=(a_{\lambda})_{\lambda  \in \Lambda} \in \RR^{\Lambda} $, by Hahn-Banach Theorem. This grants that points i) and ii) of  Assumption \ref{suposQLeo} hold with  
 $\mathbf{F}_0=\RR^{\Lambda} $, $\mathbf{G}_0=\bigoplus\limits_{\lambda \in \Lambda} \RR 
 $,    $\theta(\omega)=(h_{\lambda}(\omega))_{\lambda\in \Lambda}$ and $ \mathbf{C}_0 =\Pi_{\lambda \in \Lambda}  [a_{\lambda},\infty) $;  point iii)   holds if  the family $(h_{\lambda})_{\lambda  \in \Lambda}\cup \{1\}$    is linearly independent, or otherwise can be obtained by replacing $\mathbf{G}_0$ by a suitable quotient space (see Section \ref{worstcasecomplete} for details or the Example \ref{weak insider} below  for a concrete instance). 
 Assumption \ref{suposQLeo}  is  not an actual  restriction,  in the setting of Theorem \ref{minimaxsinreflexividad} in  complete markets. In turn,  it allows us to deal with  uncertainty sets  naturally arising in modeling situations, for instance  market information  specified by  moments of observables,  the probability of a given event, or even the full law of a given observable or  the flow of time-marginal laws of a random process  (as considered  e.g. in \cite{Bau}).  See Examples \ref{BSRobust} to \ref{weakflow}  below in this section. 

In order to characterize the minimizing density  $\hat{Z}=\frac{d \hat{Q}}{d\Prob}$ in  \eqref{dualcomplete} and following \cite{Leo-ent,LeoAbstract,Minvery},  we will formulate a dual  problem to it  in some  space $ \mathbf{G}$,  solvable under  some weak qualification condition.
 To that end notice that, as a consequence of points i),ii) and iii) of Assumption  \ref{suposQLeo} (see  Section \ref{worstcasecomplete} for additional discussion), the mapping
$$( g_0,a)\in   \mathbf{G}_0\times \RR \mapsto    \langle g_0,\theta (\cdot)\rangle_{ \mathbf{G}_0, \mathbf{F}_0}+a  \in  L_{U^{-1}\circ| \cdot|} $$
embeds  the space $ \mathbf{G}_1 :=    \mathbf{G}_0\times \RR$ into  $ L_{U^{-1}\circ| \cdot|} $ and thus  induces a norm on $ \mathbf{G}_1$.  We denote by $ \mathbf{G}$ the completion of $ \mathbf{G}_1$
under it, which is isomorphic to a closed subspace of $L_{U^{-1}\circ| \cdot|}$, and call 
 $ \mathbf{F}$ the topological dual of $ \mathbf{G}$, which is a linear subspace of $ \mathbf{F}_1 :=    \mathbf{F}_0\times \RR$. Write  $\langle g, f \rangle$ for  the natural dual product of the pair $(g,f) \in \mathbf{G}\times \mathbf{F}$ and denote by $$\langle g, \theta_1 \rangle\in  L_{U^{-1}\circ| \cdot|} $$ the element identified with $ g\in \mathbf{G}$; in particular, $\langle g, \theta_1 \rangle= \langle  g_0  , \theta  \rangle_{\mathbf{G}_0,\mathbf{F}_0}  + \beta\cdot 1 $ if    $g= (g_0,\beta) \in \mathbf{G}_1$.  Setting $ \mathbf{C}_1:= (\mathbf{C}_0\times\{1\} )$ and $ \mathbf{C}:= \mathbf{C}_1 \cap\mathbf{F}$, the dual problem of  \eqref{dualcomplete} is:
\begin{equation}\label{dualdualcomplete}
\textit{Maximize } \inf_{f \in \mathbf{C}} \langle g,f  \rangle - y 
 \E\left[ U^{-1} \left( \left( \langle g,\theta_1 \rangle\right)_+\right) \right] 
,\,  g \in \mathbf{G}. 
 \end{equation} 
\noindent  The following functional 
   \begin{equation}\label{Gamma*}
  \Gamma_y^*(f,s):= \sup_{g\in \mathbf{G}_0} \sup_{\beta \in\RR}  \,  \langle  g  , f  \rangle_{\mathbf{G}_0,\mathbf{F}_0}  + s\beta  - y   \E\left[ U^{-1} \left( \left( \beta+   \langle g ,\theta\rangle_{\mathbf{G}_0,\mathbf{F}_0}\right)_+\right) \right]   \, ,  \quad( f,s)\in  {\mathbf{F}_1} 
  \end{equation}
  will be useful  to state sufficient conditions for  primal-dual equality between the pair of problems \eqref{dualcomplete} and \eqref{dualdualcomplete}. Moreover, we will also state in terms of it  a weak qualification  condition ensuring 
 dual attainability and allowing us to characterize  the solution of \eqref{dualcomplete}. 
 Recall that the \textit{affine hull} aff$(A)$ of $A \subset L$, where $L$ is a linear space, is the smallest affine subspace of $L$ containing $A$, and the  {\it intrinsic core}  of $A$,   given by  $$\mbox{icor}(A):=\left\{ a \in A| \forall x \in \mbox{aff}(A),\exists t>0 \mbox{ st. } a+t(x-a)\in A \right\},$$
is the largest topology-free notion of  its \textit{interior}.  
Our main result in the complete case is: 
\smallskip

\begin{theorem}
\label{teocompletocaractintro}
Suppose  that Assumption \ref{suposQLeo}  holds. 
\begin{itemize} 
\item[a)]  For each $y>0$, the following identities hold:
\begin{equation}\label{dualLeointro}
  \begin{split}
  v(y)=  & \inf_{f\in \mathbf{C}_0} \Gamma_y^*(f,1)\\
= &  \sup_{g\in \mathbf{G}_0} \sup_{\beta \in\RR}  \left( \inf_{f\in \mathbf{C}_0}     \langle  g  , f  \rangle_{\mathbf{G}_0,\mathbf{F}_0}  + \beta \right) - y   \E\left[ U^{-1} \left( \left( \beta+   \langle g ,\theta\rangle_{\mathbf{G}_0,\mathbf{F}_0}\right)_+\right) \right]   \\
= &  \sup_{g\in \mathbf{G}} \left( \inf_{f\in \mathbf{C}}     \langle  g  , f  \rangle\right) - y   \left[ U^{-1} \left( \left( \langle g,\theta_1 \rangle\right)_+\right) \right] 
. \\
\end{split}
\end{equation}
Moreover,  if   $\mathbf{C}_1\cap \mbox{ dom} (\Gamma^*_{y})  \neq \emptyset$ then the infimum  in \eqref{dualcomplete}
 is attained at a unique element $Z^y \in \frac{d\qu}{d\R}$ and the  four  expressions in \eqref{dualLeointro}  equal  $ \E (\gamma^*_y(Z^y)) $.  If in addition  $\mathbf{C}_1\cap \mbox{icor (dom} (\Gamma^*_{y}))  \neq \emptyset$, 
then problem \eqref{dualdualcomplete} has a solution  $g\in \mathbf{G}$.
   \item[b)]  
    A pair $( Z^y,g^y) \in L_{   | \cdot |   V 	\circ 1/ | \cdot |  } \times \mathbf{G} $   solves problems \eqref{dualcomplete}
  and   \eqref{dualdualcomplete}
if and only if   
\begin{equation}
 \begin{cases} 
 \bullet  &   ( \Theta\left( Z ^y\right),1) \in  \mathbf{C} \cap  dom \,  \Gamma^*_y \, , \\
\bullet  &  \langle  g^y,   ( \Theta\left( Z^y \right),1)  \rangle \leq     \langle  g^y , f\rangle \, \mbox{  for all }   f \in  \mathbf{C}\cap  dom \,  \Gamma^*_y  \mbox{ and }\\
\bullet & Z^y   = y \left[ U^{-1} \right] ' \left( \left( \langle g^y,\theta_1 \rangle\right)_+\right) 
. \\
 \end{cases}
 \end{equation}
 In particular, $\left( \langle g^y ,\theta_1 \rangle\right)_+=\left( \langle \hat{ g}^y,\theta_1 \rangle\right)_+$,  $\Prob-$a.s. for any  solutions  $g^y,\hat{g}^y \in \mathbf{G}$ to   \eqref{dualdualcomplete}. 
 
  \item[c)] If    $\mathbf{C}_1\cap \mbox{ icor(dom} (\Gamma^*_{y}))  \neq \emptyset$ for all $y>0$, then
   for all $x>0$,  we have:
\begin{equation*}
u(x)=  \inf_{y>0} \left(\inf_{f\in \mathbf{C}_0} \Gamma_y^*(f,1) +xy \right) 
=  \inf_{y>0} \left(  \E\left[Z^yV\left(\frac{y}{Z^y} \right)\right]  +xy \right)
=  \E\left[Z^{\hat{y}}V\left(\frac{\hat{y}}{Z^{\hat{y}}} \right)\right]  +x{\hat{y}},
\end{equation*}
where  $\hat{y}$ belongs to the super-differential of $u$ at $x$.
    \end{itemize}
\end{theorem}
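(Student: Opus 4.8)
The plan is to recognize the dual problem \eqref{dualcomplete} as an instance of abstract entropy minimization under linear constraints and to invoke the duality machinery of \cite{Leo-ent,LeoAbstract,Minvery}. Concretely, \eqref{dualcomplete} asks to minimize the integral functional $Z\mapsto\E(\gamma^*_y(Z))$ over those $Z$ in the reflexive Orlicz space $L_{|\cdot| V\circ 1/|\cdot|}$ satisfying the linear constraints $\E(Z)=1$ and $\Theta(Z)\in\mathbf{C}_0$, the positivity $Z\geq 0$ being already encoded in $\gamma^*_y=+\infty$ on $(-\infty,0)$. The natural dual pairing is between $L_{|\cdot| V\circ 1/|\cdot|}$ and its norm dual $L_{U^{-1}\circ|\cdot|}$ (Theorem \ref{variasobsorlicz}, Proposition \ref{rep}), into which the constraint functionals $\langle g,\theta\rangle$ are embedded by Assumption \ref{suposQLeo}(ii). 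Folding the normalization $\E(Z)=1$ into an extra coordinate produces the augmented space $\mathbf{G}_1=\mathbf{G}_0\times\RR$ and its completion $\mathbf{G}$, which is exactly the setting in which the abstract theorems are stated.

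For part a), I would first record that $\gamma^*_y$ is a convex lower semicontinuous integrand whose convex conjugate is $w\mapsto yU^{-1}(w_+)$; this is the content of Lemma \ref{lemagama} and it produces the term $y\E[U^{-1}((\langle g,\theta_1\rangle)_+)]$ in \eqref{dualdualcomplete} and in $\Gamma^*_y$. The functional $\Gamma^*_y$ in \eqref{Gamma*} is precisely the convex conjugate, in the variables $(f,s)\in\mathbf{F}_1$, of the dual objective $(g,\beta)\mapsto y\E[U^{-1}((\beta+\langle g,\theta\rangle)_+)]$, so the first identity $v(y)=\inf_{f\in\mathbf{C}_0}\Gamma^*_y(f,1)$ is the basic primal representation in that framework while the equality with the supremum over $\mathbf{G}_0\times\RR$ is its no-duality-gap statement; the third identity, over the completed space $\mathbf{G}$, then follows from density of $\mathbf{G}_1$ in $\mathbf{G}$ and norm-continuity of the dual objective. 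For the attainment claims I would use reflexivity of $L_{|\cdot| V\circ 1/|\cdot|}$ (guaranteed here by Assumption \ref{Usuposdelta}): under $\mathbf{C}_1\cap\mathrm{dom}(\Gamma^*_y)\neq\emptyset$ the primal value is finite, the sublevel sets of $\E(\gamma^*_y(\cdot))$ are weakly compact, and the feasible set is $\sigma(L_{|\cdot| V\circ 1/|\cdot|},L_{U^{-1}\circ|\cdot|})$-closed by Assumption \ref{suposQLeo}(v), so a standard lower-semicontinuity argument yields a minimizer $Z^y$, unique because $\gamma^*_y$ is strictly convex on $(0,\infty)$. Dual attainment under the stronger qualification $\mathbf{C}_1\cap\mathrm{icor}(\mathrm{dom}\,\Gamma^*_y)\neq\emptyset$ is exactly the attainment clause of the abstract theorem, the intrinsic-core condition playing the role of constraint qualification.

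For part b), the characterization is the system of Karush--Kuhn--Tucker conditions attached to this convex program. Once $v(y)=\E(\gamma^*_y(Z^y))$ equals the dual value attained at $g^y$, the Fenchel--Young inequality $\gamma^*_y(Z^y)+yU^{-1}((\langle g^y,\theta_1\rangle)_+)\geq Z^y\langle g^y,\theta_1\rangle$ must hold with equality $\Prob$-a.s., which forces $Z^y=y[U^{-1}]'((\langle g^y,\theta_1\rangle)_+)$ upon differentiating the conjugate relation. The two remaining conditions express feasibility $(\Theta(Z^y),1)\in\mathbf{C}\cap\mathrm{dom}\,\Gamma^*_y$ and the complementary-slackness/support condition that $(\Theta(Z^y),1)$ minimizes $\langle g^y,\cdot\rangle$ over $\mathbf{C}\cap\mathrm{dom}\,\Gamma^*_y$; together these close the chain of inequalities linking primal and dual values, and the argument is reversible. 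The final uniqueness of $(\langle g^y,\theta_1\rangle)_+$ follows from uniqueness of $Z^y$ and strict monotonicity, hence injectivity, of $[U^{-1}]'$.

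For part c), I would invoke the conjugacy $u(x)=\inf_{y>0}(v(y)+xy)$ established in Theorem \ref{minimaxsinreflexividad}, substitute the representation $v(y)=\E[Z^yV(y/Z^y)]$ from part a), and conclude that the infimum is attained at some $\hat{y}$ in the superdifferential of $u$ at $x$; this is the standard Fenchel relation for a finite concave function, nonemptiness of the superdifferential being guaranteed by finiteness of $u$ (Theorem \ref{minimaxsinreflexividad}). The main obstacle throughout is the faithful translation of \eqref{dualcomplete} into the abstract framework: verifying that $\gamma^*_y$ and its conjugate meet the integrand hypotheses of \cite{Leo-ent,LeoAbstract,Minvery}, that $\Theta$ is continuous for the chosen dual pairing, and, most delicately, that the passage to the completed space $\mathbf{G}$ together with the intrinsic-core qualification delivers dual attainment without a gap; the reflexivity furnished by Assumption \ref{Usuposdelta} is what makes the primal side of this argument go through.
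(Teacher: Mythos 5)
You follow the paper's own route---casting \eqref{dualcomplete} as an abstract entropy minimization problem in the sense of \cite{Leo-ent,LeoAbstract} and reading off the identities, the optimality system and part c) from that machinery plus the conjugacy of $u$ and $v$ from Theorem \ref{minimaxsinreflexividad}---and your treatment of the primal attainment, of part b) and of part c) is essentially correct. But there is one genuine gap, at the dual attainment step, and it is exactly the point the paper singles out as requiring new work. You assert that attainment of \eqref{dualdualcomplete} under $\mathbf{C}_1\cap \mbox{icor}(\mbox{dom}\,\Gamma^*_y)\neq\emptyset$ ``is exactly the attainment clause of the abstract theorem''. It is not: the abstract results furnish a maximizer only for an \emph{extended} dual problem posed over the algebraic dual $\widetilde{\mathbf{G}}$ of $\mathbf{F}$, and the results of \cite{Leo-ent} that would normally let one pull the extended solution back into $\mathbf{G}$ do not apply here, because the conjugate integrand $\gamma_y(w)=yU^{-1}(w_+)$ vanishes on the negative half-line and therefore fails to dominate the even Young function $\eta=U^{-1}(|\cdot|)$ defining $L_{\eta}$; finiteness of the dual objective gives no control of the $L_{\eta}$-norm of $\langle g,\theta_1\rangle$. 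The paper closes this with a bespoke three-step argument: solve the extended problem $\widetilde{D}_y$ in $\widetilde{\mathbf{G}}$ (Lemma \ref{existdualextend}, via Theorem 5.3 of \cite{Leo-ent}); prove in Lemma \ref{domPhi}---using Riesz-space/order-dual theory and, crucially, reflexivity of $L_{\eta^*}$ so that relatively bounded forms on $L_{\eta^*}$ are exactly $L_{\eta}$---that $\overline{\Phi}_y(\zeta)<\infty$ forces $\zeta$ to be represented by some $W^{\zeta}\in L_{\eta}$ with $\overline{\Phi}_y(\zeta)=\Phi_y((W^{\zeta})_+)$; and finally upgrade the extended solution $\tilde g$ to an element of $\mathbf{G}$ by producing a net $g_{\alpha}\in\mathbf{G}$ with $\Theta_1^*(g_{\alpha})\to\Theta_1^*(\tilde g)$ in $\sigma(L_{\eta},L_{\eta^*})$ and using that weak and norm closures of the convex set $\Theta_1^*(\mathbf{G}_1)$ in $L_{\eta}$ coincide. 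None of this is in your proposal, and the intrinsic-core qualification alone does not substitute for it.

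Two smaller points. First, you justify the third identity in \eqref{dualLeointro} by ``density of $\mathbf{G}_1$ in $\mathbf{G}$ and norm-continuity of the dual objective'', but $g\mapsto\inf_{f\in\mathbf{C}}\langle g,f\rangle$ is only upper semicontinuous (an infimum of continuous linear forms), so density alone does not transfer the supremum; in the paper this identity is also part of the dual equalities imported from Theorem 3.2 of \cite{Leo-ent}. Second, your derivation of the third bullet in b) by pointwise Fenchel--Young equality is a legitimate compression, but the paper's actual proof verifies G\^ateaux differentiability of $\Phi_y$ on $L_{\eta}$---which needs the growth bound \eqref{U-1delta}---and combines the abstract optimality system of Theorem 5.4 in \cite{Leo-ent} with the little dual equality \eqref{little dual} and the surjectivity of $\Theta_1$; if you argue directly, you must still justify passing from the subdifferential inclusion $Z^y\in\partial_{L_{\eta^*}}\overline{\Phi}_y(\Theta_1^*(g^y))$ to the a.s.\ pointwise formula, which is where that differentiability enters.
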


\smallskip 
Thus,  in a complete market and under the assumptions of Theorem \ref{teocompletocaractintro}, 
finding the worst-case measure $\hat{\Q}$ attaining the infimma  in \eqref{igualdades} amounts to first  finding for  each $y>0$ a solution  $g^y$ to   \eqref{dualdualcomplete} and computing  $v(y)=\E\left[Z^{y}V\left(\frac{y}{Z^{y}} \right)\right]  $, where   $Z^y =  y \left[ U^{-1} \right] ' \left( \left( \langle g^y ,\theta_1 \rangle\right)_+\right)  $, then  finding  $\hat{y}>0$ that minimizes the obtained values of $v(y)+xy$ and setting $\hat{\Q}=Z^{\hat{y}}\cdot \Prob$.
 
 \smallskip 

For each $y>0$, problem  \eqref{dualdualcomplete}   dual  to   \eqref{dualcomplete} is, in a way,  a ``dual of a dual problem'' to the original problem \eqref{RobPortOpt}. The difference is that the first dualization is w.r.t.\ the budget constraint whereas the second  one is w.r.t.\ the constraints determining the uncertainty set. The assumption $\mathbf{C}_1\cap \mbox{icor (dom} (\Gamma^*_{y}))  \neq \emptyset$  corresponds to a constraint qualification condition of geometric (rather than topological) type for the last dualization.  
 Note that  in  many practical instances,   problem   \eqref{dualdualcomplete}  can  be finite-dimensional: 
 
 \smallskip 

  \begin{example}\label{BSRobust}
  Consider  $S_t=\exp \left\{ -\frac{\sigma^2}{2}t + \sigma W_t \right\}$  the risk-neutral Samuelson-Black-Scholes   model, with  $W$  a  standard Brownian motion, $\sigma^2>0$ and $S_0=1$ (for simplicity). We take $U(x)=2 x^{1/2}  $ and   $\qu_A:= \{\Q \in {\cal P}: \Q \ll \Prob,  \,
  \E^{\Q}(S_T)\geq A, d\Q/d\Prob \in L^2\} $  for fixed $A>0$,  
   so that  $L_{ | \cdot |  V 	\circ 1/ | \cdot |   }=L_{U^{-1}  \circ| \cdot| }=L^2$ and   $\frac{d \qu_A}{d\Prob} $ is weakly closed in $ L^2$,  by Example  \ref{QnotcloL0} with $h:=S_T$.  Girsanov Theorem   yields   for each $A>0$ the existence of a probability measure $\Q_A$ s.t.    $\frac{d \Q_A}{d\Prob}\in L^2$ and  $\E^{\Q_A}(S_T)=A$, hence $\qu_A\neq \emptyset$. 
  Moreover, $\qu_A$ is closed  under infinite convex combinations, and since $S_T$ and $1$ are obviously  linearly independent r.v., Assumption \ref{suposQLeo} holds. 
  
\noindent Furthermore, we can  directly  check that $\Theta_1(\frac{d \Q_A}{d\Prob})\in  dom (\Gamma^*_{y})$  (or alternatively use  the ``little dual equality'' \eqref{little dual}) in order to get that 
 $\mathbf{C}_1\cap \mbox{dom} (\Gamma^*_{y})  \neq \emptyset$.  Since  for  any $(a,b)\in \RR^2_+$ with $a,b\neq 0$  there is  $Z\in L^2, Z\geq 0$  such that $(\E(Z),\E(Z S_T))=(a,b)$ (take e.g.  $Z:= a \frac{d \Q_A}{d\Prob}\in L^2$ with  $\Q_{A}$ as above with  $A=\frac{b}{a}$) we similarly check that  $\mathbf{C}_1\cap \mbox{icor(dom} (\Gamma^*_{y}))  \neq \emptyset$. 
 
    We next solve the (second) maximization problem in   \eqref{dualLeointro}, that is 
  \begin{equation} \notag
\begin{split}
  \sup_{(\beta,\alpha)} \left[\inf_{c\geq A} 
\beta+c\alpha- \E^{\Prob}\left(yU^{-1}(\beta+S_T\alpha)_+ \right) \right] = &  \sup_{\beta\in \RR,\alpha \geq 0} \left[\beta+A\alpha- \E^{\Prob}\left(yU^{-1}(\beta+S_T\alpha)_+ \right) \right]\\
= & \sup_{\beta\in\RR,\alpha \geq 0} \beta+A\alpha- \frac{y}{4}\E^{\Prob}\left((\beta+S_T\alpha)^2 \ind{\beta+S_T\alpha > 0} \right) \\
\end{split}
\end{equation}
 In order to get explicit expressions, we assume that $e^{\sigma^2 T}>A>1$. Upon explicitly computing the  expectation, we   notice that, in that case, the unique critical point of the above concave function  is  $g^y=\left(\frac{2(e^{\sigma^2 T}-A)}{y\left(e^{\sigma^2 T}-1 \right)}, \frac{2(A-1)}{y\left(e^{\sigma^2 T}-1 \right)}\right) \in (0,\infty)^2$, with optimal value  $\frac{1}{y}\left[1+\frac{(A-1)^2}{e^{\sigma^2 T}-1}\right]$ (see Example 2.2.3 in \cite{JBtesis} for details). We deduce that
 \begin{align*}
u(x)&=2\sqrt{x\left(1+\frac{(A-1)^2}{e^{\sigma^2T}-1} \right)}  \quad  , &
\hat{\Q}(d\omega)&:=\frac{e^{\sigma^2 T}-A+S_T(A-1)}{e^{\sigma^2 T}-1}\Prob(d\omega).
\end{align*}
Hence $\hat{\Q}$ is the unique convex combination of the measures $\Prob$ and $S_T \cdot \Prob$ which is a probability measure and satisfies $\EE^{\hat{\Q}}(S_T)=A$.
Classic results in the non-robust setting (cf. \cite{KrSch}) yield
$$\hat{X}_T:= x
\frac{\left( e^{\sigma^2 T}-A+S_T(A-1)\right) ^2}{\left(e^{\sigma^2 T}-1+ (A-1)^2\right) \left(e^{\sigma^2 T}-1\right)} \, , \,  \Prob \mbox{  and }\hat{\Q} \mbox{ a.s. } $$
and the robust optimal strategy can then be  derived by standard hedging arguments.

\end{example}

\smallskip 

 \begin{example}\label{weak insider}
 Let $(E,\Sigma)$ be measurable  space and $\vartheta:\Omega\to E$  a measurable ``observable'' of the market. Let $\nu \ll \mu:= \Prob\circ \vartheta^{-1}$  be   a probability measure on $E$ with $ \frac{d\nu }{d\mu}  \in   L_{   | \cdot |   V 	\circ 1/ | \cdot |  } (E,\Sigma,\mu) $ and assume 
$$  \frac{d\qu}{d\R} =\left\{Z  \in   L_{   | \cdot |   V 	\circ 1/ | \cdot |  }\, : \EE(Z)=1, \, Z\geq 0  \mbox{ a.s. and   }(Z\cdot \Prob )\circ \vartheta^{-1}= \nu  \right\}.$$
 Taking 
 $\mathbf{G}_0={\cal B}\slash {\cal B}^{\Prob}$ with  ${\cal B}:=\{ g: E\to \RR ,  \mbox{ bounded measurable}\}$ and ${\cal B}^{\Prob}:=\{ g\in{\cal B}: g=cst. \,\mu-  a.s.\}$, 
 $\mathbf{F}_0=\{ f:\Sigma \to \RR:  f \mbox{  finite signed measure}\}$ with  $ \langle g+ {\cal B}^{\Prob} ,f\rangle_{\mathbf{G}_0,\mathbf{F}_0} : =\int g(x)f(dx)$ and  $\theta(\omega)=\delta_{\vartheta(\omega)}$,  points i) to  iv) of Assumption \ref{suposQLeo}  hold. Also,    problem \eqref{dualdualcomplete} is   equivalent to 
  \begin{equation*}
\textit{Maximize }
 \E\left[ g (\vartheta) \frac{d\nu }{d\mu} (\vartheta)- y U^{-1} \left( g_+ (\vartheta)  \right) \right] 
 ,\,  g \in L_{ U^{-1}  \circ| \cdot|}( E,\Sigma,\mu), 
 \end{equation*} 
The first order optimality condition  for this problem is $$ \E\left[ g (\vartheta) \left(\frac{d\nu }{d\mu} (\vartheta)- y \left[ U^{-1} \right] ' \left(  g_+^{y,U}(\vartheta) \right)  \right) \right] =0,$$ for all bounded measurable $g:E\to \RR$. From this and part b) of Theorem \ref{teocompletocaractintro} we get that, provided we can always find $g^{y,U}$ such that $y \left[ U^{-1} \right] ' \left(  g_+^{y,U}(\vartheta) \right)=\frac{d\nu }{d\mu} (\vartheta) , \, \Prob- $a.s.\ , then the primal solution is $Z$ independent on $y$ and $U$, and so we recover the least-favourable measure found in \cite{Bau}. It is clear we can indeed find such $g^{y,U}\in \mathbf{G}$ in this case.\end{example}

\smallskip 

\begin{remark} Example \ref{weak insider}  points out to a more general result. Indeed, it is not difficult to see that if $\mathbf{C}=\{f\}$ is a singleton, which by Hanh-Banach can be associated to a unique minimal representative $Z\in L_{   | \cdot |   V 	\circ 1/ | \cdot |  }  $ characterized by   $\langle g,f\rangle = \EE[Z\langle g,\theta_1\rangle]$  and its  measurability w.r.t.\  the sigma-field generated by $ \{\langle g,\theta_1\rangle :g\in \mathbf{G}\}$ (see the beginning of Section \ref{charactmin}), then provided $Z=y[U^{-1}]'(\langle g^{y,U},\theta_1\rangle_+)$ is always solvable we get as before that $Z$ is the  primal solution and this is independent of $y$ and $U$. We roughly conjecture in the complete case that  the existence of a least-favourable measure in $\qu$ is related to the properties that, for every nice Young function $\phi^*$ such that the associated Orlicz space is reflexive and contains $d\qu/d\Prob$, the set $\qu$ can be written down with $\mathbf{C}=\{f\}\subset \mathbf{F}$  a singleton and that the minimal representative $Z$  satisfies $[\phi ']^{-1}(Z)\in \{\langle g,\theta_1 \rangle_+:g\in \mathbf{G}\}$.

\end{remark}

\smallskip 

\begin{example}\label{weakflow}
Assume $\vartheta=(\vartheta_t)_{t\in [0,T]}$ is under $\Prob$  a continuous process with values in $\RR^d$ and 
$$  \frac{d\qu}{d\R} =\left\{Z  \in   L_{   | \cdot |   V 	\circ 1/ | \cdot |  }\, : \EE(Z)=1, \, Z\geq 0  \mbox{ a.s. and   }(Z\cdot \Prob )\circ \vartheta_t^{-1}= \nu_t , \, t\in [0,T]  \right\}$$ 
for a flow of  probability laws  $(\nu_t)_{t\in [0,T]}$ s.t. $\nu_t \ll  \Prob\circ \vartheta_t^{-1}$ (as succinctly  studied  in \cite{Bau}). We can  take   $\mathbf{G}_0=\{ g\in C([0,T]\times \RR^d,\RR): \mbox{vanishing when }|x|\to \infty  \}$, $\mathbf{F}_0=C([0,T], {\cal M}(\RR^d))$, where  $ {\cal M}(\RR^d)$ is the space of finite signed measures on $\RR^d$ endowed with the weak topology, $ \langle g ,f\rangle_{\mathbf{G}_0\,\mathbf{F}_0} :=\int_0^T \int_{\RR^d}  g(t,x) f_t(dx) dt $ and $\theta= (\delta_{\vartheta_t})_{t\in [0,T]}$. The validity of Assumption  \ref{suposQLeo} and the   solvability of problem \eqref{dualdualcomplete}  will in general depend on the  market and on  $\vartheta$, and can be studied in specific instances (this is work in progress, but see \cite[Chapter 3.6.2]{BackhofftesisUChile} in Spanish).
\end{example}

\section{Orlicz-Musielak  spaces and the robust optimization problem}
\label{Orlicz}

We now introduce some general functional spaces needed in our study of the robust optimization problem. These can actually be seen as  Orlicz spaces based on ``randomized Young functions''.  Their main properties including dual spaces and reflexivity are first reviewed in Section \ref{OMtopo},  following succinctly  the presentation in \cite{KozekBan,Kozekint}. Then in Section \ref{secapli} we translate and apply these concepts to the robust optimization setting, for which some relevant functionals are introduced and a few technical results are established.  
\subsection{Orlicz-Musielak Spaces}
\label{OMtopo}
Recall  that  $(\Omega,\F,\R)$ is a (complete) probability space and that the notation $\E(\cdot)$ is employed for the expectation under  $\R$. 

\medskip

\begin{definition}
\label{rho-functional}
A functional $\rho:\RR \times \Omega \rightarrow [0,\infty]$ is said to be a rho-functional if the following hold:
\begin{enumerate}
\item $\rho$ is jointly measurable
\item for almost every $\omega\in\Omega$,  $\rho(\cdot,\omega)$ is lower-semicontinuous and convex
\item $\rho(0,\cdot)\equiv 0$ and $\rho(x,\cdot)=\rho(-x,\cdot)$
\item If $\alpha:\Omega \rightarrow (0,\infty)$ is measurable, then there exists a measurable function $\lambda:\Omega \rightarrow (0,\infty)$ such that a.s. $\left[|x|\geq \lambda(\omega) \Rightarrow \rho(x,\omega)\geq \alpha(\omega)  \right]$ .
\item If $\epsilon:\Omega \rightarrow (0,\infty)$ is measurable, then there exists a measurable function $\rho:\Omega \rightarrow (0,\infty)$ such that a.s. $\left[|x|\leq \rho(\omega) \Rightarrow \rho(x,\omega)\leq \epsilon(\omega)  \right]$ .
\item The random variables  $\rho(x,\cdot)$ and  $\rho^*(y,\cdot):=\sup_{x\in (-\infty,\infty)}( xy- \rho(x,\cdot)) $ are integrable for every $x, y \in (-\infty,\infty)$.

\end{enumerate}
\end{definition} 

\medskip

\begin{remark}
  Under the  conditions in Definition \ref{rho-functional}, the results in \cite{KozekBan} are valid. It is worth noting that in that paper  a functional  $\rho$  satisfying conditions 1.\  through 5.\  was called an ``N-function''. However, such a $\rho$ ``only'' converges a.s.\ to zero (resp. to $\infty$) when $x$ tends to zero (resp. to $\infty$), whereas in the nowadays standard definition of N-functions, it is the quotient $\frac{\rho(x,\omega)}{x}$ that has this limiting behaviour in $x$ near $0$ and $+\infty$. To avoid confusions we use here the  different ``rho-functional''  terminology. Also,   we note that in the language of \cite{KozekBan}, the above condition 6.\  amounts to  requiring  ``condition B on  $\rho$ and $\rho^*$'', and is necessary  to obtain topological duality results.  Last, it is not difficult to see from the above definition that $\rho^*$ is also a rho-functional.
    \end{remark}

Define now  for a random variable $Z:\Omega \rightarrow (-\infty,\infty)$,
$$I_{\rho}(Z):=\E\left[ \rho( Z,\cdot)\right]\leq \infty$$

In the terminology of \cite{KozekBan}, this is a normal convex modular. This allows us to define the following spaces:

\medskip
\begin{definition}
\label{defiOrliczspaces}
The Orlicz-Musielak space associated to $\rho$ is defined as:
\begin{equation}
L_{\rho}(\Omega,\R):=\left\{Z\in L^0 \mbox{ s.t.  }  \exists \alpha >0, I_{\rho}(\alpha Z) < \infty \right\}, \label{Lrho}
\end{equation}
and its  \textit{Orlicz heart} is the subspace:
\begin{equation}
E_{\rho}(\Omega,\R):=\left\{Z\in L^0 \mbox{ s.t.  } \forall \alpha >0, I_{\rho}(\alpha Z) < \infty \right\}. \label{Erho}
\end{equation} 
\end{definition}

In the following, $L_{\rho}$ will stand as an abbreviation for $L_{\rho}(\Omega,\R)$. The following result is a compendium of known facts; see Theorem 2.3.1 in \cite{JBtesis} for the references:

\medskip
\begin{theorem}
\label{variasobsorlicz} The following functionals define equivalent norms on $L_{\rho}$:
\begin{eqnarray}
\| Z \|^l_{\rho} &:=& \inf \left\{\beta>0: I_{\rho}\left(\frac{Z}{\beta}\right)  \leq 1 \right\}, \\
\|Z\|^a_{\rho} &:=& \sup\left\{\E(\phi Z) \mbox{ : } \phi \in L_{\rho^*}, \hat{I}_{\rho}(\phi) \leq 1  \right\}\\
&=& \sup\left\{\E(\phi Z) \mbox{ : } \phi \in L_{\rho^*}, \vert\vert \phi \vert\vert_{\rho^*}^l \leq 1  \right\},
\end{eqnarray}
where $\hat{I}_{\rho}(\phi):=\sup\limits_{Z \in L_{\rho}}\left[\E(\phi Z)- I_{\rho}(Z) \right]=I_{\rho^*}$, and $\rho^*(\cdot,\omega)$ is the a.s.\ convex conjugate of $\rho(\cdot,\omega)$ as defined previously.
Moreover,  the norm $\|\cdot\|^a_{\rho}$ has the equivalent expression 
\begin{equation}
\|Z\|^a_{\rho} = \inf_{k>0}\left\{ \frac{1}{k} \left(1+ I_{\rho}(kZ) \right) \right\}.   \label{normorlicz}
\end{equation} 
Under these equivalent norms, the linear space $L_{\rho}$ is a Banach space.

Finally, when $\rho$ is finite the topological dual of $E_{\rho}$  is isometrically isomorphic to $L_{\rho^*}$ (assuming that in one space a $\| \cdot \|^l$ norm is taken and in the other a $\| \cdot \|^a$ norm is taken) with the identification $[\phi \in E_{\rho}^* \leftrightarrow g \in L_{\rho^*}] \iff [\phi(Z)=\E(Zg),\forall Z \in E_{\rho}]$. 

\end{theorem}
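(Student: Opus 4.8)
The plan is to verify each assertion by reducing it to the standard machinery of normal convex modulars developed in \cite{KozekBan}, systematically exploiting the six properties of a rho-functional from Definition \ref{rho-functional}. First I would check that $\|\cdot\|^l_\rho$ is a norm by exhibiting it as the Minkowski gauge of the sublevel set $B:=\{Z\in L^0: I_\rho(Z)\le 1\}$. This set is convex because $I_\rho$ is convex (integration preserves the a.s.\ convexity of $\rho(\cdot,\omega)$ from condition 2), and symmetric because $\rho(x,\cdot)=\rho(-x,\cdot)$ (condition 3); hence positive homogeneity and the triangle inequality for the gauge are automatic. That $B$ is absorbing on $L_\rho$, so the gauge is finite, is exactly the defining requirement $\exists\alpha>0,\ I_\rho(\alpha Z)<\infty$ in \eqref{Lrho}, while lower semicontinuity of $I_\rho$ (condition 2, via Fatou) yields $I_\rho(Z/\|Z\|^l_\rho)\le 1$. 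Definiteness, $\|Z\|^l_\rho=0\Rightarrow Z=0$, follows from the coercivity condition 4, which forces $\rho(x,\omega)>0$ on $\{x\ne 0\}$ for a.e.\ $\omega$ and hence $I_\rho(\lambda Z)\to\infty$ as $\lambda\to\infty$ whenever $Z\ne 0$.

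For the Amemiya norm and its dual representation I would start from the Young--Fenchel inequality $xy\le \rho(x,\omega)+\rho^*(y,\omega)$, valid a.s.\ by definition of the conjugate, which upon integration gives $\E(\phi Z)\le I_\rho(Z)+I_{\rho^*}(\phi)$ for all admissible pairs. The crucial identity $\hat I_\rho=I_{\rho^*}$ is obtained by proving the reverse inequality: for fixed $\phi$ one selects, via a measurable-selection argument, a random variable attaining (or approaching) the pointwise supremum $\rho^*(\phi(\omega),\omega)=\sup_x\,(x\phi(\omega)-\rho(x,\omega))$, and then interchanges the supremum with the expectation. This interchange is precisely where integrability of $\rho(x,\cdot)$ and $\rho^*(y,\cdot)$ (condition 6, i.e.\ ``condition B'') is needed, and I expect it to be one of the two main technical obstacles. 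Granting $\hat I_\rho=I_{\rho^*}$, the Fenchel-type formula $\|Z\|^a_\rho=\inf_{k>0}\frac1k(1+I_\rho(kZ))$ and the two supremum expressions follow formally, and the equivalence $\|Z\|^l_\rho\le\|Z\|^a_\rho\le 2\|Z\|^l_\rho$ then reduces to the standard elementary bounds between a Luxemburg gauge and its Amemiya representation (inserting $k=1/\|Z\|^l_\rho$ into the infimum, together with $I_\rho(Z/\|Z\|^l_\rho)\le 1$, gives the upper estimate).

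Completeness I would obtain by the usual Riesz--Fischer scheme: a sequence Cauchy in $\|\cdot\|^l_\rho$ is Cauchy in measure, since coercivity (condition 4) lets one bound $\R(\{|Z|\ge\lambda\})$ in terms of a small modular; one then passes to the $L^0$-limit $Z$, extracts an a.s.\ convergent subsequence, and applies Fatou's lemma to the lower-semicontinuous integrand $\rho$ to conclude both $Z\in L_\rho$ and norm convergence. Finally, for the dual of the Orlicz heart when $\rho$ is finite, the map $g\mapsto\bigl(Z\mapsto\E(Zg)\bigr)$ embeds $L_{\rho^*}$ isometrically into $E_\rho^*$ by the dual pairing of the two norms already established. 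Surjectivity is the second delicate point: when $\rho$ is finite the Luxemburg norm is order-continuous on $E_\rho$, so every continuous functional is an absolutely continuous (``integral'') functional, and a Radon--Nikodym argument produces a representing density $g$, which one checks lies in $L_{\rho^*}$ and realizes the norm isometrically. I expect the two interchange-of-limits steps, namely the conjugate-modular identity $\hat I_\rho=I_{\rho^*}$ and the surjectivity of the representation of $E_\rho^*$, to carry essentially all the difficulty, the remaining assertions being formal once these are in place.
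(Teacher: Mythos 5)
Your outline is correct in substance, but note that the paper itself offers no proof of Theorem \ref{variasobsorlicz}: it is introduced as ``a compendium of known facts'' with a pointer to Theorem 2.3.1 of \cite{JBtesis} and, through it, to Kozek's papers \cite{KozekBan,Kozekint}, so there is no in-paper argument to match. What you reconstruct is precisely the standard route of that literature: the Luxemburg norm as the Minkowski gauge of $\left\{Z: I_{\rho}(Z)\leq 1\right\}$, the conjugacy identity $\hat{I}_{\rho}=I_{\rho^*}$ via Young's inequality plus a measurable-selection interchange (this is Theorem A.5 of \cite{KozekBan}, alias Proposition 1.5 of \cite{Kozekint}, which the paper invokes in Remark \ref{dualKoz}), a Riesz--Fischer scheme for completeness, and a Radon--Nikodym argument for $E_{\rho}^*\cong L_{\rho^*}$. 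The last item is in fact executed inside the paper, one level of generality up: the appendix proofs of Proposition \ref{rep} and Lemma \ref{lemR} carry out exactly your scheme for the modular spaces $E_I$, $L_J$ (indicators lie in the heart thanks to condition 6 of Definition \ref{rho-functional}, simple functions are dense, bounded a.s.-null sequences are norm-null, hence $A\mapsto l(\ind{A})$ is countably additive, then Radon--Nikodym plus a norm computation yields the density and the isometry). So your proposal for the dual identification coincides with the argument the authors chose for its modular analogue.

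Two local corrections. First, condition 4 of Definition \ref{rho-functional} does not force $\rho(x,\omega)>0$ for $x\neq 0$: a rho-functional may vanish on a whole neighbourhood of the origin (only condition 5 constrains small $x$, and it points the other way). Definiteness of the gauge must instead be run entirely through the dilation mechanism you state at the end of that sentence: if $\Prob\left(|Z|>\epsilon\right)=\delta>0$, apply condition 4 with a constant $\alpha>2/\delta$, truncate the resulting random threshold $\lambda(\omega)$ at a level $M$ retaining mass at least $\delta/2$, and conclude $I_{\rho}(kZ)>1$ once $k\epsilon\geq M$; the slip is harmless, but the intermediate claim should be deleted. Second, the identity between the dual-supremum expression for $\|\cdot\|^a_{\rho}$ and the Amemiya infimum \eqref{normorlicz} is not purely formal: Young's inequality only gives
\begin{equation*}
\sup\left\{\E(\phi Z): \hat{I}_{\rho}(\phi)\leq 1\right\}\;\leq\;\inf_{k>0}\frac{1}{k}\left(1+I_{\rho}(kZ)\right),
\end{equation*}
and the reverse inequality is exactly where the conjugacy $\hat{I}_{\rho}=I_{\rho^*}$ (equivalently, Fenchel--Moreau for the modular with respect to the $\langle L_{\rho},L_{\rho^*}\rangle$ pairing, combined with a one-dimensional duality in the ray parameter $k$) must be deployed. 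You flag the conjugacy identity and the surjectivity of the dual representation as the two delicate points; this third step belongs on the same list rather than among the ``formal'' consequences.
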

\medskip
The  norms $\| \cdot \|^l_{\rho}$ and $\| \cdot \|^a_{\rho}$ are called respectively Luxemburg and Amemiya norms. Now thanks to Young's inequality, one can derive a series of H\"older inequalities:
$$\E(|Zg|)\leq 2 N_{\rho}(Z)N_{\rho^*}(g)$$
where $N_{\rho}$ (resp. $N_{\rho^*}$) represents any of the norms in $L_{\rho}$ (resp. $L_{\rho^*}$) introduced in Theorem \ref{variasobsorlicz}.  In particular,   $L_{\rho^*}$ (resp.  $L_{\rho}$)  is embedded   in the topological dual of  $L_{\rho}$ (resp.  $L_{\rho}^*$), and  $L_{\rho}$  and  $L_{\rho^*}$ are  continuously embedded in $L^1$.   The following  growth property of a rho-functional  and its relation with   topological properties  of the associated Orlicz-Musielak  space is relevant:
\begin{definition}
\leavevmode 
\label{deltados}
A finite rho-functional $\rho$ is said to satisfy the $\Delta_2$ condition (or $\rho \in \Delta_2$), if there is a constant $K\geq 1$ and a non-negative integrable function $h$ such that a.s.:
\begin{equation}
\rho(2x,\omega)\leq K\rho(x,\omega)+h(\omega).
\end{equation}
\end{definition} 
We now state Corollary 1.7.4  in \cite{Kozekint} as:

\begin{theorem}\label{OMreflex}
Let $\rho$ satisfy condition $\Delta_2$. Then $E_{\rho} = dom\left(I_{\rho}\right) = L_{\rho}$ and hence $(L_{\rho})^*$ is isometrically isomorphic to $L_{\rho^*}$. 
 Moreover, if the measure $\R$ is non-atomic, the condition $\Delta_2$ is also necessary  for this last isomorphism to hold. 

Therefore, if both $\rho$ and $\rho^*$ satisfy the $\Delta_2$ condition,  the Banach spaces $L_{\rho}$ and $L_{\rho^*}$ are in topological duality and are reflexive. The converse is true if $\R$ is non-atomic.
 \end{theorem}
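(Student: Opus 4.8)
The plan is to establish the three assertions in order, the crux being the identity $E_{\rho}=dom(I_{\rho})=L_{\rho}$ under $\Delta_2$, from which the dual isomorphism and reflexivity follow by combining Theorem \ref{variasobsorlicz} with a biconjugacy argument.

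The inclusions $E_{\rho}\subseteq dom(I_{\rho})\subseteq L_{\rho}$ hold trivially (take $\alpha=1$ in the respective definitions), so the only content of the first assertion is $L_{\rho}\subseteq E_{\rho}$, and this is where $\Delta_2$ is used. Iterating the defining inequality $\rho(2x,\omega)\le K\rho(x,\omega)+h(\omega)$ yields, for each $n$,
$$\rho(2^{n}x,\omega)\le K^{n}\rho(x,\omega)+C_{n}\,h(\omega),\qquad C_{n}=\sum_{j=0}^{n-1}K^{j}<\infty .$$
Since $\rho(\cdot,\omega)$ is convex, even and vanishes at the origin (conditions 2--3 of Definition \ref{rho-functional}), it is non-decreasing in $|x|$. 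Hence, given $Z\in L_{\rho}$, choosing $\alpha>0$ with $I_{\rho}(\alpha Z)<\infty$ and an arbitrary target $\beta>0$, we pick $n$ with $\beta\le 2^{n}\alpha$ and bound $\rho(\beta Z,\cdot)\le\rho(2^{n}\alpha Z,\cdot)$. Integrating the iterated inequality and using $I_{\rho}(\alpha Z)<\infty$ together with $h\in L^{1}$ gives $I_{\rho}(\beta Z)<\infty$ for every $\beta$, i.e.\ $Z\in E_{\rho}$. The isometric isomorphism $(L_{\rho})^{*}\cong L_{\rho^{*}}$ is then immediate: $\rho$ is finite, so the last part of Theorem \ref{variasobsorlicz} identifies $(E_{\rho})^{*}$ with $L_{\rho^{*}}$, and $E_{\rho}=L_{\rho}$.

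For the necessity of $\Delta_2$ when $\R$ is non-atomic I would argue by contraposition. If $\rho\notin\Delta_2$, one constructs an explicit witness $Z\in L_{\rho}\setminus E_{\rho}$: the failure of $\Delta_2$ supplies, for each $n$, a level $x_{n}$ and a set on which $\rho(2x_{n},\cdot)$ dwarfs $n\rho(x_{n},\cdot)+h$, and non-atomicity lets one carve out disjoint pieces $A_{n}$ of prescribed small measure so that $Z=\sum_{n}x_{n}\ind{A_{n}}$ satisfies $I_{\rho}(Z)<\infty$ yet $I_{\rho}(2Z)=\infty$. Then $E_{\rho}$ is a proper norm-closed subspace of $L_{\rho}$, so Hahn--Banach provides a nonzero $\phi\in(L_{\rho})^{*}$ vanishing on $E_{\rho}$; since $E_{\rho}$ contains every bounded function and each $g\in L_{\rho^{*}}$ acts by $Z\mapsto\E(Zg)$, no nonzero element of $L_{\rho^{*}}$ can vanish on $E_{\rho}$, whence $\phi\notin L_{\rho^{*}}$ and the isomorphism fails. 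I expect this construction --- the calibration of the $x_{n}$ and the exhaustion of $\Omega$ by the $A_{n}$ --- to be the main technical obstacle, as it is precisely the step requiring the absence of atoms.

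Finally, for reflexivity under $\rho,\rho^{*}\in\Delta_2$ I would apply the first assertion to both $\rho$ and its conjugate. Condition 2 of Definition \ref{rho-functional} makes $\rho(\cdot,\omega)$ convex and lower semicontinuous, so Fenchel--Moreau gives $\rho^{**}=\rho$ and hence $(L_{\rho^{*}})^{*}\cong L_{\rho^{**}}=L_{\rho}$. Composing with $(L_{\rho})^{*}\cong L_{\rho^{*}}$ yields $(L_{\rho})^{**}\cong L_{\rho}$, and one checks this identification is the canonical embedding (both isomorphisms being realized through the pairing $\E(Zg)$), so $L_{\rho}$ is reflexive and $L_{\rho},L_{\rho^{*}}$ are in topological duality. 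For the converse in the non-atomic case, if either $\rho$ or $\rho^{*}$ failed $\Delta_2$ then, by the construction above, the corresponding space would contain the disjointly supported blocks $x_{n}\ind{A_{n}}$ spanning an isomorphic copy of $c_{0}$ (resp.\ $\ell^{\infty}$); as reflexivity passes to closed subspaces and these spaces are not reflexive, $L_{\rho}$ could not be reflexive. Of course, all of this is subsumed by Corollary 1.7.4 of \cite{Kozekint}, which we invoke directly.
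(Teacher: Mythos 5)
The paper offers no proof of this statement at all: Theorem \ref{OMreflex} is quoted directly as Corollary 1.7.4 of \cite{Kozekint}, so your reconstruction supplies strictly more than the text does, and in its main lines it is correct. Your sufficiency argument --- iterating $\rho(2x,\omega)\le K\rho(x,\omega)+h(\omega)$ to $\rho(2^{n}x,\omega)\le K^{n}\rho(x,\omega)+\bigl(\sum_{j=0}^{n-1}K^{j}\bigr)h(\omega)$, noting that convexity, evenness and $\rho(0,\cdot)=0$ make $\rho(\cdot,\omega)$ non-decreasing in $|x|$, and integrating against the integrable $h$ --- is the standard route and correctly reduces the dual identification to the last part of Theorem \ref{variasobsorlicz} once $E_{\rho}=\mathrm{dom}(I_{\rho})=L_{\rho}$ is in hand. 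The biconjugation step for reflexivity is also sound, provided you record two routine facts: that $\rho^{*}$ is again a finite rho-functional (the paper only remarks this after Definition \ref{rho-functional}), so the first assertion may legitimately be applied to it, and that the two pairing-induced isometries compose to the \emph{canonical} embedding of $L_{\rho}$ into its bidual --- which you do note, and the Luxemburg/Amemiya bookkeeping is harmless since the norms are equivalent and reflexivity is topological. The one place where your sketch underestimates the difficulty is the necessity direction: in the Musielak--Orlicz setting the negation of $\Delta_2$ does not hand you deterministic levels $x_{n}$; it only says that for every $K$ and every integrable $h$ the inequality fails on a set of positive measure, with the offending level depending on $\omega$. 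So the witness $Z=\sum_{n}x_{n}\ind{A_{n}}$ must in general be built from measurable level \emph{functions} $x_{n}(\omega)$ via a measurable-selection argument, which is genuinely more delicate than the classical Orlicz construction you are implicitly following; likewise the norm-closedness of $E_{\rho}$ in $L_{\rho}$, which your Hahn--Banach step uses, deserves a word (the paper proves the modular analogue in Proposition \ref{lengthy}, and the same argument works here). Since you ultimately invoke Corollary 1.7.4 of \cite{Kozekint} for precisely these points, your proposal ends exactly where the paper begins, and modulo those flagged details it is a faithful and correct expansion of the cited result.
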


\subsection{Towards the robust optimization problem}
\label{secapli}

We next associate a family of Orlicz-Musielak spaces of the  previous  type with  the    robust maximization problem \eqref{RobPortOpt}. 
We recall first  some useful  and  well-known  properties  of the function $V$  in \eqref{V} (see Lemma 2.3.1 in \cite{JBtesis}).

  \begin{lemma}
\leavevmode 
\label{lemaVUG}
The function $V$ is strictly convex, l.s.c.\  finite and differentiable (on $(0,\infty)$), strictly  decreasing, strictly positive, and satisfies:
$$ 
\lim_{x \rightarrow \infty} \frac{V(x)}{x} =  \inf \{x: U(x)> -\infty\}\,\,\,\, \,\,\,\,\,\,\,\, \mbox{ and }\,\,\,\,\,\,\,\,\,\,\,\,
V(0)=  \lim_{x \rightarrow \infty} U(x). $$
Moreover, if  $U$ satisfies $AE(U)<1$, then condition \eqref{Vdelta} holds for $V$. 
\end{lemma}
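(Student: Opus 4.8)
The plan is to derive every assertion from the first-order description of the inner optimization defining $V$. First I would fix $y>0$ and examine the strictly concave $C^1$ map $x\mapsto U(x)-xy$ on $(0,\infty)$: its derivative $U'(x)-y$ runs from $+\infty$ at $0+$ to $-y<0$ at $+\infty$ by INADA, so it vanishes at exactly one point $x^*(y)=(U')^{-1}(y)\in(0,\infty)$, which is the unique maximizer. Hence $V(y)=U(x^*(y))-x^*(y)\,y$ is finite, and being a supremum of affine functions of $y$ the map $V$ is automatically convex and lower semicontinuous. Uniqueness of the maximizer together with the envelope (Danskin) theorem then gives differentiability on $(0,\infty)$ with $V'(y)=-x^*(y)<0$, so $V$ is strictly decreasing; and since $U'$ is a continuous strictly decreasing bijection of $(0,\infty)$, its inverse $x^*$ is strictly decreasing, whence $V'=-x^*$ is strictly increasing and $V$ is strictly convex.

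Next I would pin down the boundary behaviour. By monotonicity of $U$, $V(0)=\sup_{x>0}U(x)=\lim_{x\to\infty}U(x)$, giving the second displayed identity. For strict positivity, $U'(0+)=\infty$ furnishes, for each $y>0$, a small $x$ with $U'>y$ on $(0,x)$, so that $U(x)-xy=U(0+)+\int_0^x(U'(t)-y)\,dt>U(0+)=0$ (using the normalisation $U(0+)=0$), whence $V(y)>0$. For the asymptotic slope, $V>0$ already yields $\liminf_{y\to\infty}V(y)/y\ge0$, while the concavity bound $U(x)\le U(\delta)+U'(\delta)(x-\delta)$ plugged into the supremum shows $V(y)\le U(\delta)$ once $y>U'(\delta)$; dividing by $y$ and letting $y\to\infty$ then $\delta\to0$ gives $\lim_{y\to\infty}V(y)/y=0=\inf\{x:U(x)>-\infty\}$.

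The substantive part is showing that $AE(U)<1$ forces condition \eqref{Vdelta}. Here I would convert asymptotic elasticity into a growth bound on $V$ near $0$. Using $x^*=-V'$ and $U(x^*)=V(y)-yV'(y)$, the change of variables $x=x^*(y)$, $y=U'(x)$ (with $x\to\infty\iff y\to0+$) gives
$$\frac{xU'(x)}{U(x)}=\frac{-yV'(y)}{V(y)-yV'(y)}=\frac{r(y)}{1+r(y)},\qquad r(y):=\frac{-yV'(y)}{V(y)}>0.$$
As $s\mapsto s/(1+s)$ is increasing to $1$, $AE(U)<1$ is equivalent to $\gamma:=\limsup_{y\to0+}r(y)<\infty$. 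Fixing $\epsilon>0$ and $y_0\in(0,1)$ with $r\le\gamma+\epsilon$ on $(0,y_0]$, and writing $\psi(t):=\log V(e^t)$ so that $\psi'(t)=-r(e^t)\ge-(\gamma+\epsilon)$ there, I would integrate
$$\log\frac{V(y/2)}{V(y)}=-\int_{\log y-\log2}^{\log y}\psi'(s)\,ds\le(\gamma+\epsilon)\log2,\qquad y\le y_0,$$
to obtain $V(y/2)\le 2^{\gamma+\epsilon}V(y)$ on $(0,y_0]$. On $[y_0,\infty)$ monotonicity gives $V(y/2)\le V(y_0/2)=:M$, which is absorbed by the affine term; choosing $a=2^{\gamma+\epsilon}$ and $b=\max\{1,M/(y_0+1)\}$ then yields $V(y/2)\le aV(y)+b(y+1)$ for all $y>0$.

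I expect the main obstacle to be this last step: cleanly establishing the elasticity equivalence $AE(U)<1\iff\gamma<\infty$ via the conjugacy relations, and converting the pointwise differential bound into the multiplicative near-origin estimate $V(y/2)\le 2^{\gamma+\epsilon}V(y)$ through the logarithmic integration above. The remaining assertions are standard facts about Legendre transforms of strictly concave INADA utilities.
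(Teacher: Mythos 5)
Your proof is correct. The paper does not actually prove this lemma in-text---it cites Lemma 2.3.1 of \cite{JBtesis} for the general facts, and the final assertion is the classical Lemma 6.3/Corollary 6.1 of \cite{KrSch}---and your argument follows exactly that classical route: the conjugacy identities $x^*(y)=-V'(y)$ and $U(x^*(y))=V(y)-yV'(y)$ convert $AE(U)<1$ into $\limsup_{y\to 0+}\bigl(-yV'(y)/V(y)\bigr)<\infty$, and integrating $\frac{d}{dt}\log V(e^t)$ over an interval of length $\log 2$ yields $V(y/2)\le 2^{\gamma+\epsilon}V(y)$ near the origin, with monotonicity of $V$ absorbing the region $y\ge y_0$ into the affine term $b(y+1)$. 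Two cosmetic remarks: strict positivity of $V$ (which you need in order to divide by $V(y)$ and take logarithms) relies on the standing normalization $U(0+)=0$ from Assumption \ref{assgnral}, which you correctly invoke; and the final limit $\delta\to 0$ in your asymptotic-slope computation is superfluous, since any fixed $\delta$ already gives $\limsup_{y\to\infty}V(y)/y\le 0$.
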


\medskip

The next functions,  briefly introduced in Section \ref{maincomplete}, will play a central role in the sequel:

\medskip
\begin{definition}\label{gammal*}
\leavevmode 
\label{gamma*} 
For   $y \geq 0$ we define the function
\begin{eqnarray} 
\gamma^*_y(z) &=& \left\{
\begin{array}{ll}
\infty & \mbox{ if } z<0,\\
zV\left(\frac{y}{z}\right) & \mbox{ if } z\geq 0,
\end{array}
\right.
\end{eqnarray}
and call $\gamma_y$ its convex conjugate. We use the convention $\frac{0}{0}=0$ to define $\gamma^*_0$.
\end{definition}

In robust optimization (a branch within optimization theory) one would call  $\gamma^*_l$ \textit{the adjoint} of $V$ (see e.g. \cite{Ben-Tal}). The next result is known, except for the third item; see the Appendix for a proof.

\medskip

\begin{lemma}
\leavevmode 
\label{lemagama}
Under point 1.\ in Assumption \ref{assgnral}, we have
\begin{itemize}
\item The function $(y,z)\mapsto \gamma^*_y(z)$ is convex on $[0,\infty)^2$. 
\item The function $\gamma^*_y(\cdot)$ is l.s.c, convex in its domain (strictly if $y>0$), on the positive half-line is increasing, finite and strictly positive, and we have $\gamma^*_y(0)=0$ and $\lim_{t\rightarrow + \infty} \frac{\gamma_{y}^*(t)}{t} = +\infty$. 
\item If $y>0$ then $\gamma_y^*(|\cdot|)=|\cdot|V\left(\frac{y}{|\cdot|}\right)$ and ${\gamma}_y(|\cdot|)=y U^{-1}(\vert\cdot\vert)$ are convex conjugates. \end{itemize}

\end{lemma}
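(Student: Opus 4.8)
The first two items are standard facts about perspective functions, and I would treat them briefly: item~1 follows because $(y,z)\mapsto zV(y/z)$ is the perspective of the convex function $V$, hence jointly convex on $(0,\infty)^2$ and extended by lower semicontinuity using the convention $0/0=0$ together with $\lim_{z\to 0+}zV(y/z)=0$ (valid since $V(t)/t\to 0$ as $t\to\infty$ by Lemma~\ref{lemaVUG}); item~2 collects the monotonicity, finiteness, positivity and superlinearity inherited from $V$, together with $\gamma^*_y(0)=0$ coming from $U(0+)=0$ in Assumption~\ref{assgnral}. I will therefore concentrate on item~3. The plan is to compute the conjugate $\gamma_y=(\gamma^*_y)^*$ explicitly on $\RR$ and then to pass to the even extensions. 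Since $\gamma^*_y$ is convex and lower semicontinuous by items~1--2, the biconjugation theorem gives $(\gamma_y)^*=\gamma^*_y$, so once the conjugate is identified the two functions form a conjugate pair automatically.

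For the core computation, note that because $\gamma^*_y(z)=+\infty$ for $z<0$ and $V>0$, one has $\gamma_y(v)=\sup_{z\ge 0}[\,zv-zV(y/z)\,]=0$ for every $v\le 0$. For $v>0$ I would write the objective as $F(z)=z\,(v-V(y/z))$ and differentiate: with $t=y/z$, using $V'=-(U')^{-1}=:-I$ and the identity $V(t)=U(I(t))-tI(t)$ from Lemma~\ref{lemaVUG}, one finds $F'(z)=v-\bigl(V(t)-tV'(t)\bigr)=v-U(I(t))$. Since $F$ is concave (because $\gamma^*_y$ is convex), its unique stationary point is the global maximum, and $F'(z)=0$ forces $U(I(t))=v$, i.e.\ $I(t)=U^{-1}(v)$ whenever $0<v<\Delta:=\lim_{x\to\infty}U(x)$. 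Substituting back through $V(t)=v-tU^{-1}(v)$ and $z=y/t$ gives the optimal value $F(z)=z\,(v-V(t))=z\,t\,U^{-1}(v)=yU^{-1}(v)$. Thus $\gamma_y(v)=yU^{-1}(v)$ on $(0,\Delta)$, and I would check the edge $v\ge\Delta$ separately: there $U^{-1}(v)=+\infty$ by convention, matched by $F(z)\to+\infty$ as $z\to\infty$, which follows from $V(0)=\Delta$ and $I(0+)=+\infty$ (the latter from the INADA condition $U'(+\infty)=0$).

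It then remains to relate the even extensions. For $\Phi:=\gamma^*_y(|\cdot|)$, splitting the supremum over $w\ge 0$ and $w<0$ and using that $\gamma^*_y$ is infinite on the negatives gives $\Phi^*(v)=\max\bigl(\gamma_y(v),\gamma_y(-v)\bigr)$. Since $\gamma_y$ vanishes on $(-\infty,0]$ and equals $yU^{-1}(\cdot)$ on $[0,\infty)$ (with $U^{-1}(0)=0$ from $U(0+)=0$), this maximum equals $yU^{-1}(|v|)$, which is precisely the value of $\gamma_y(|\cdot|)$ at $v$. Hence $\Phi^*=\gamma_y(|\cdot|)$, and as $\Phi$ is convex and lower semicontinuous (being the composition of the nondecreasing convex $\gamma^*_y$ with $|\cdot|$), biconjugation yields $\Phi=\Phi^{**}=\bigl(\gamma_y(|\cdot|)\bigr)^*$, so the two even functions are a conjugate pair as claimed.

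The delicate points, and where I expect the real work to lie, are all at the boundary of the domains rather than in the interior identity: verifying the limit $zV(y/z)\to 0$ and the convention $0/0=0$ that make $\gamma^*_y$ lower semicontinuous at the origin; confirming that the stationarity equation $U(I(t))=v$ is solvable exactly on $(0,\Delta)$ and that the growth of $F$ reproduces the convention $U^{-1}\equiv+\infty$ on $[\Delta,\infty)$; and the repeated use of INADA and $U(0+)=0$ from Assumption~\ref{assgnral} to guarantee that $I=(U')^{-1}$ is a bijection of $(0,\infty)$ onto $(0,\infty)$, so that the change of variable $t=y/z\leftrightarrow I(t)=U^{-1}(v)$ is legitimate. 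Once these regularity facts are in place, the interior identity is a one-line Legendre-duality computation.
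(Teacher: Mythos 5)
Your proposal is correct and follows essentially the same route as the paper's own proof: both identify $\gamma_y$ by solving the stationarity condition with the identities $V'=-[U']^{-1}$ and $V(t)=U\bigl([U']^{-1}(t)\bigr)-t[U']^{-1}(t)$, obtain $yU^{-1}$, and settle the boundary cases before concluding conjugacy of the even extensions via convexity and lower semicontinuity (biconjugation). If anything, you are slightly more thorough than the paper, which works directly with $\sup_{z\geq 0}\{|x|z-zV(l/z)\}$ and only checks explicitly the case where the supremum is attained at $z=0$, whereas you also treat the edge $v\geq\Delta$ (where $U^{-1}=+\infty$ and the stationarity equation is unsolvable) and the assembly $\Phi^*(v)=\max\bigl(\gamma_y(v),\gamma_y(-v)\bigr)$ for the even extensions.
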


The advantage of working with $\gamma_y^*(|\cdot|)$ is that it is a finite, even function. Notice that we have seen the functions $\gamma_y^*(|\cdot|)$ and ${\gamma}_y(|\cdot|)$, with $y=Y(\omega)>0$ in Section \ref{motivgen}. This motivates
 \begin{definition} \label{defietas} Let $Y \in \mathcal{Y}_{\R}(1)$. We denote by 
 $\eta^*_Y,\eta_Y :\RR\times \Omega \rightarrow [0,\infty]$   the functionals: 
$$\eta_Y^*(z,\omega) := \gamma_{Y_T(\omega)}^*(|z|)= |z| V\left(\frac{Y_T (\omega)}{|z|}\right) \,\,\,\,\,\,\mbox{ and } \,\,\,\,\,\,\eta_Y(z,\omega) := \gamma_{Y_T(\omega)}(|z|)=Y_T(\omega) U^{-1}(|z|).$$
 \end{definition}

Of course, if $Y_T>0$ a.s., $  \eta_Y^*(\cdot,\omega)$  and  $ \eta_Y(\cdot ,\omega)$ almost surely inherit  the obvious properties  of  $\gamma_y^*(|\cdot|) $ and $\gamma_y(|\cdot|) $  (stated e.g.\ in Lemma 2.3.3 of \cite{JBtesis}). As it is next proved,  under mild assumptions they  induce rho-functionals:

\medskip

\begin{proposition} \label{quienes}
Let $Y\in\mathcal{Y}_{\R}(1)$ be strictly positive and suppose Assumption \ref{assgnral} point 1.
\begin{itemize}
\item[a)]  Then the a.s.\  convex conjugate of the function $ \eta_Y^*(\cdot,\omega) $ is $\eta_Y(\cdot,\omega)$ and, provided that
$$\forall \beta>0, \E[V(\beta Y_T)]<\infty ,$$
 $\eta_Y^*(\cdot,,\omega) $ and $\eta_Y(\cdot,,\omega)$ are rho-functionals in the sense of Definition \ref{rho-functional}. 
\item[b)] If  condition \eqref{Vdelta} (resp \eqref{U-1delta}) holds, the  function  $\eta_Y^*(\cdot,,\omega) $ (resp.  $\eta_Y(\cdot,,\omega)$) is in $ \Delta_2$.
\item[c)] If $AE(U)<1$, then $\eta_Y^* \in \Delta_2$ and   the condition in a)  reduces to 
 $$\exists \beta >0,\E[V(\beta Y_T)]<\infty.$$
\end{itemize} 
\end{proposition}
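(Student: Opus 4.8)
The plan is to verify the three items of Proposition \ref{quienes} by reducing everything to the one-dimensional properties of $\gamma_y^*$ and $\gamma_y$ already collected in Lemma \ref{lemagama}, and then checking the six defining conditions of a rho-functional pointwise in $\omega$ with $y=Y_T(\omega)$.

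For part a), the conjugacy statement is immediate: since $Y_T(\omega)>0$ a.s., the third item of Lemma \ref{lemagama} gives that $\gamma_{Y_T(\omega)}^*(|\cdot|)$ and $\gamma_{Y_T(\omega)}(|\cdot|)$ are a.s.\ convex conjugates, which is exactly the claim $\eta_Y(\cdot,\omega)=(\eta_Y^*(\cdot,\omega))^*$. To show $\eta_Y^*$ and $\eta_Y$ are rho-functionals I would go down Definition \ref{rho-functional} item by item. Joint measurability (item 1) follows from the jointly continuous/measurable dependence of $\gamma_y^*(z)=zV(y/z)$ on $(y,z)$ together with measurability of $Y_T$. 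Items 2 and 3 (a.s.\ lower semicontinuity, convexity, evenness, and vanishing at $0$) are the pointwise properties listed in Lemma \ref{lemagama}: $\gamma_y^*(|\cdot|)$ is convex, finite, even, with $\gamma_y^*(0)=0$, and likewise for $\gamma_y$. The growth conditions (items 4 and 5) should follow from the superlinearity $\lim_{t\to\infty}\gamma_y^*(t)/t=+\infty$ and from $\gamma_y^*(0+)=0$ with continuity, applied $\omega$-wise with the measurable selection $\lambda(\omega)$, respectively $\rho(\omega)$, built from $Y_T(\omega)$; the measurability of these selections comes from the explicit monotone structure of $\gamma_{Y_T(\omega)}^*$. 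The only genuinely substantive point is item 6, the integrability $\E[\eta_Y^*(z,\cdot)]<\infty$ and $\E[\eta_Y(z,\cdot)]<\infty$ for each fixed real $z$. Here $\eta_Y^*(z,\omega)=|z|V(Y_T(\omega)/|z|)$, and finiteness of this expectation for all $z$ is precisely what the hypothesis $\forall\beta>0,\ \E[V(\beta Y_T)]<\infty$ delivers (take $\beta=1/|z|$); symmetrically $\eta_Y(z,\omega)=Y_T(\omega)U^{-1}(|z|)$ is integrable because $U^{-1}(|z|)$ is a finite constant and $Y_T\in L^1$, as $Y\in\mathcal{Y}_{\R}(1)$ forces $\E[Y_T]\le 1$ by the supermartingale property.

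For part b), I would simply unwind the definitions. The $\Delta_2$ condition of Definition \ref{deltados} for $\eta_Y^*$ asks for $\eta_Y^*(2z,\omega)\le K\eta_Y^*(z,\omega)+h(\omega)$ a.s. Writing this out gives $2|z|V(Y_T/(2|z|))\le 2|z|[aV(Y_T/|z|)+b(Y_T/|z|+1)]$ after applying \eqref{Vdelta} with $y=Y_T/|z|$; this rearranges to the required inequality with $K=2a$ and $h(\omega)=2b(Y_T(\omega)+\ |z|\!\cdot\!)$ — more precisely the $z$-dependent terms must be absorbed, so the clean way is to recognize that \eqref{Vdelta} is exactly the $\Delta_2$-type bound for the Young function $|\cdot|V\circ 1/|\cdot|$ as remarked after Assumption \ref{Usuposdelta}, and the integrable majorant $h$ is $2bY_T$ plus a constant. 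The same mechanism with \eqref{U-1delta} handles $\eta_Y(\cdot,\omega)=Y_TU^{-1}(|\cdot|)\in\Delta_2$, the integrable function being $dY_T$.

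For part c), the point is that $AE(U)<1$ upgrades the hypothesis in a). By Lemma \ref{lemaVUG}, $AE(U)<1$ implies condition \eqref{Vdelta} holds for $V$, so $\eta_Y^*\in\Delta_2$ by part b). The reduction of the integrability condition is then a standard $\Delta_2$ argument: \eqref{Vdelta} gives $V(\beta y)\le a^{n}V(y)+(\text{affine in }y)$ after iterating to pass from $\beta=2^{-n}$ to general small $\beta$, and monotonicity of $V$ handles large $\beta$; hence $\E[V(\beta_0 Y_T)]<\infty$ for a single $\beta_0$ propagates to all $\beta>0$. I expect item 6 of Definition \ref{rho-functional}, and in particular tracking the integrable majorant carefully through the $\Delta_2$ inequalities so that the constants do not secretly depend on $z$, to be the main technical obstacle; the remaining items are direct transcriptions of the pointwise facts in Lemma \ref{lemagama}.
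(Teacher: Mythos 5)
Your overall route coincides with the paper's: reduce everything to the pointwise facts of Lemma \ref{lemagama} applied with $y=Y_T(\omega)$, verify the six items of Definition \ref{rho-functional} $\omega$-wise, obtain integrability of $\eta_Y(c,\cdot)$ from $\E[Y_T]\leq 1$ (via the supermartingale property) and of $\eta_Y^*(c,\cdot)$ from the hypothesis $\E[V(\beta Y_T)]<\infty$, and settle part c) by combining the last part of Lemma \ref{lemaVUG} with an iteration of \eqref{Vdelta} (which you, unlike the paper, usefully spell out, and which indeed needs $Y_T\in L^1$ to integrate the affine remainder). Parts a) and c) are in order.

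The gap is in part b), at precisely the step to which the paper devotes its one displayed computation. Applying \eqref{Vdelta} with $y=Y_T/z$ gives $\eta_Y^*(2z)\leq 2a\,\eta_Y^*(z)+2bY_T+2bz$, and the remainder $2bz$ depends on $z$, which Definition \ref{deltados} forbids: your claimed pair $K=2a$, $h=2bY_T$ plus a constant fails whenever $b>0$, since $2bz$ is unbounded in $z$ and so cannot be dominated by any $z$-free integrable function while keeping $K=2a$. Your fallback --- that \eqref{Vdelta} ``is exactly'' the $\Delta_2$ bound for the Young function $|\cdot| V\circ 1/|\cdot|$, per the remark after Assumption \ref{Usuposdelta} --- is circular (that remark is an unproved assertion, stated for the deterministic case $Y\equiv 1$) and in any case does not produce the integrable majorant needed for random $Y$. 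The missing idea is a split in $z$ at the random level $Y_T/V^{-1}(1)$: on $\{z\geq Y_T/V^{-1}(1)\}$ one has $V(Y_T/z)\geq 1$ (as $V$ is decreasing), hence $z\leq zV(Y_T/z)=\eta_Y^*(z)$, so the term $2bz$ is absorbed into the \emph{multiplicative} constant, enlarging $K$ from $2a$ to $2a+2b$; on $\{z< Y_T/V^{-1}(1)\}$ one has $2bz\leq 2bY_T/V^{-1}(1)$, which goes into $h(\omega)$ and is integrable precisely because $\E[Y_T]\leq 1$. (If $V(0+)\leq 1$, replace the level $1$ by any value in the range of $V$.) This yields the paper's inequality $\eta_Y^*(2z)\leq (2a+2b)\,\eta_Y^*(z)+2bY_T\left(1+1/V^{-1}(1)\right)$, i.e.\ $\eta_Y^*\in\Delta_2$ with a genuinely $z$-free, integrable $h$. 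The $\eta_Y$ case is, as you say, immediate from \eqref{U-1delta} with $K=k$ and $h=dY_T$, since there the remainder is already $z$-free.
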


\begin{proof}
The functionals $\eta_Y$ and $\eta_Y^*$ are clearly jointly measurable, and the fact that they are conjugate to each other follows from applying Lemma \ref{lemagama} almost surely. By properties of $U$ and $V$, as functions of $z$  they are a.s. l.s.c., even, null at the origin and  convergent to $0$ at $0$ and to infinity at infinity. Also, $\E[Y_TU^{-1}(c)]\leq U^{-1}(c)$ for every constant $c>0$ since $Y \in \mathcal{Y}_{\R}(1)$ satisfies $\E(Y_T)\leq 1$. Hence, $\eta_Y(c)$ is integrable. The assumption  $\E[V(\beta Y_T)]<\infty$ for every $\beta>0$ implies that  also $\eta_Y^*$ is integrable when applied to  constants. We conclude that they are rho-functionals.    
For the second point, notice that thanks to \eqref{Vdelta}, 
\begin{equation*}
\begin{split}
\eta_Y^*(2z)= & 2zV\left(\frac{Y}{2z}\right)\leq 2a\eta_Y^*(z)+2b(Y+z) \\ 
= & 2a\eta_Y^*(z)+2bY+2b z \mathbf{1}_{\{ z\geq Y/V^{-1}(1)\}} + 2b z \mathbf{1}_{\{ z< Y/V^{-1}(1)\}} \\
 \leq & 2a\eta_Y^*(z)+2bY+2b \eta_Y^*(z) + 2b Y/V^{-1}(1),\\
\end{split}
\end{equation*}
for every $z>0$,  which means that   $\eta_Y^*\in  \Delta_2$. The corresponding property for   $\eta_Y$ is direct.      The last statement c) follows from the last part of Lemma \ref{lemaVUG}.
\qquad 
\end{proof}

\medskip

Point (c) above should be compared with the comment before Corollary 6.1 in \cite{KrSch}.\\
With some abuse of notation, for $Z\in L^0$ we will write  simply 
$\eta_Y^*(Z)$ referring to  the function $\eta_Y^*(Z,\cdot):\Omega\to [0,+\infty) $ such that $\eta_Y^*(Z,\cdot)(\omega)=\eta_Y^*(Z(\omega),\omega)$. 

\bigskip

\begin{remark}
\label{dualKoz}
We deduce that, whenever $Y \in   \mathcal{Y}_{\R}(1)$ satisfies $Y_T>0$ a.s.\ and  $Y\in\Y^*$,   
$$L_{\eta_Y^*}= \left\{Z\in L^0 \mbox{ s.t. } \exists \alpha >0,
\E^{\R}\left[ \eta^*_Y \left( \alpha  Z  \right) \right]  < \infty \right\} $$
is an  Orlicz-Musielak  space. Moreover, $L_{\eta_Y^*}$ and $L_{\eta_Y}$ (defined analogously) are in separating topological duality and, by Theorem A.5 in \cite{KozekBan} or Proposition 1.5 in \cite{Kozekint}, $\E[\eta_Y^*(\cdot)]$ and  $\E[\eta_Y(\cdot)]$ are convex conjugates to each other w.r.t.\ the given duality. 
\end{remark}

\medskip

We end this section commenting that if $Y\in\Y^*$, then the topology of $L_{\eta_Y^*}$ is stronger than that of $L^1$ and that bounded sets in $L_{\eta_Y^*}$ are uniformly integrable, see  Lemma 2.3.5 of \cite{JBtesis}.

\section{Worst-case measures in complete markets}
\label{worstcasecomplete}
In this section we  prove Theorem \ref{teocompletocaractintro}. As explained at the outset of Section \ref{maincomplete}, we take  the reference measure to be the unique martingale measure, but the result can be generalized if this were not the case, at the price of dealing with random Young functions.
Upon introducing the useful notation 
\begin{equation}\label{etacasocompleto}
\eta^*(z):=\eta^*_1(z)= |z|V\left(\frac{1}{|z|}\right)=\gamma^*_1(|z|), \, z\in\RR,
\end{equation}
we recall that the   Orlicz-Musielak   space pertinent for the problem is the Orlicz space
    $L_{\eta^*}$. Finally, from Lemma \ref{lemagama}, we know that  the conjugate function of $\eta^*$ is the even function $$\eta:= \bar{\gamma}_1(\cdot)=\gamma_1(|\cdot|)=U^{-1}(\vert\cdot\vert).$$

\subsection{Characterization of the  minimizing measure}\label{charactmin}
 In what follows, Assumption  \ref{suposQLeo} is enforced. We briefly discuss in details its main consequences, and introduce some additional notation needed in this section. 
Note first that, under points i) and ii) the integral  
$$\int_{\Omega} \left\langle g, \theta(\omega) \right\rangle_{\mathbf{G}_0,\mathbf{F}_0} Z(\omega)  d\Prob (\omega),$$ is well defined  for each  $Z\in L_{\eta^*} $ and all  $g \in \mathbf{G}_0$, 
by H\"older's inequality;  it therefore defines the element of $\mathbf{F}_0=(\mathbf{G}_0)'$ denoted by   $\Theta(Z)$ in point iv).  
We write $ \mathbf{F}_1 :=  \mathbf{F}_0\times \RR, \,  \mathbf{G}_1=  \mathbf{G}_0\times \RR$ and     $\langle \cdot, \cdot\rangle_{\mathbf{G}_1,\mathbf{F}_1} $ for the obvious duality product between these spaces and set
  $$\theta_1(\omega) :=(\theta(\omega), 1)\in \mathbf{F}_1,\quad \Theta_1(Z):=\left( \int \theta Z\, d\R, \int  Z\, d\R\right)= \int \theta_1 Z\, d\R \in \mathbf{F}_1,$$  and $\mathbf{C}_1:=  \mathbf{C}_0\times \{1\}$. By points i),  ii) and iii) the adjoint $\Theta^*_1: \mathbf{G}_1\to  L_{\eta}$ of $\Theta_1:  L_{\eta^*}\to \mathbf{F}_1$ given by   $\Theta_1^*((g,a) )(\omega)=  \left\langle g, \theta(\omega) \right\rangle_{\mathbf{G}_0,\mathbf{F}_0} + a$  is a  linear injection,  $g\in  \mathbf{G}_1\mapsto \|\Theta^*_1( g) \|_{\eta}$  defines a norm and   $\mathbf{G}_1$ can be identified with  $\Theta^*_1(\mathbf{G}_1)$.  Notice that  iii) can always be assumed to hold, replacing  if needed $ \mathbf{G}_0$ by $\mathbf{G}_0	\slash  \mathbf{G}_0^{\Prob}$, with $ \mathbf{G}_0^{\Prob}:=  \{g\in \mathbf{G}_0 :  \Theta^*(g)=cst.  \, \Prob- a.s.\}$,     and $\langle \cdot, \cdot\rangle_{\mathbf{G}_0,\mathbf{F}_0} $ by the bi-linear map $(g+ \mathbf{G}_0^{\Prob},f)\mapsto \langle g,f\rangle_{\mathbf{G}_0,\mathbf{F}_0} $.

The  completion  $\mathbf{G}$ 
 of  $\mathbf{G}_1$ with respect to $\|\Theta^*_1( \cdot) \|_{\eta} $  is 
 isometrically isomorphic  to the closure  $\overline{\Theta_1^*(\mathbf{G}_1)}^{L_{\eta}}$ in $L_{\eta}$ and  $\Theta^*_1$ has a  equally denoted  isometric  extension to   $\mathbf{G}$. Recall that we write 
  \begin{equation*} \langle g, \theta_1\rangle:= \Theta_1^*(g),
 \end{equation*}
  for the element of  $\overline{\Theta_1^*(\mathbf{G}_1)}^{L_{\eta}}$ identified with   $g\in \mathbf{G}$. The topological dual  of  $\mathbf{G}$ is  
 $$\mathbf{F}:=\{f \in \mathbf{F}_1\, : \exists C_f>0  \mbox{ s.t. } |\langle g,f\rangle_{\mathbf{G}_1,\mathbf{F}_1}|\leq C_f   \|\Theta^*_1( g) \|_{\eta} \, \, \forall g\in \mathbf{G}_1 \}$$
and we use the  notation 
  $\langle\cdot , \cdot \rangle$   for   the natural  extension  of  the dual product $\langle\cdot , \cdot \rangle_{\mathbf{G}_1,\mathbf{F}_1}$      to   $\mathbf{G}\times\mathbf{F} $.
   Notice that  $ \Theta_1:L_{\eta^*} \mapsto \mathbf{F} $  is  continuous and  (by Hahn-Banach extension Theorem) surjective; $\mathbf{F}$ can thus be identified with the quotient of $ L_{\eta^*} $ by the annihilator $\left(\overline{\Theta_1^*(\mathbf{G}_1)}^{L_{\eta}}\right)^{\perp}$). One can  always choose $Z\in \Theta_1^{-1}(f)$  measurable with respect to  the sigma-field generated by  $\Theta^*_1(\mathbf{G}_1)$ (replacing $Z$ by   $ \EE( Z \vert {\cal G}) $ if needed).
 
 \medskip 
 
\noindent
The previous objects being introduced, we can now proceed to the proof
 of Theorem  \ref{teocompletocaractintro}.
 
 \noindent
 {\it Part a) :}  Observe  that functions $\gamma^*_y$, $\gamma_y$,     $\eta^*_y$   and $\eta_y$  correspond respectively  to functions $\gamma^*$, $\gamma$,  $\lambda_{\diamond}^*$ and $\lambda_{\diamond}$  in \cite{Leo-ent} (with, in the notation therein, $m(z)=0$ and $\gamma=\lambda$),   our mappings $\theta_1$ and $\Theta_1$ correspond respectively to  the mappings $\theta$ and $T_0$ therein,  and  our spaces and sets  $\mathbf{F}_1,  \mathbf{G}_1,  \mathbf{C}_1, \mathbf{F}$ and $ \mathbf{G}$ correspond respectively to ${\cal X}_0$,  ${\cal Y}_0$, $C$,  ${\cal X}$ and  ${\cal Y}$ in that work. Applying parts a) and b) of  Theorem 3.2 in  \cite{Leo-ent}, and since conditions 1) and 2) therein are ensured by our assumptions, we readily deduce the validity of   part a) of Theorem  \ref{teocompletocaractintro}, except for the attainability of problem  \eqref{dualdualcomplete}, which requires some additional analysis. 

Indeed, note that a solution to  \eqref{dualdualcomplete}  might in general  not exist,  since  the non-even function $\gamma_y^*$ does not provide  a control of  the Young function $\eta^*$ defining the space $L_{\eta^*}$. As in  \cite{Leo-ent}, we  need to introduce first a suitable  extension of   \eqref{dualdualcomplete}, which  will  always have  a solution in some abstract space  under our assumptions, and  prove that  it actually is an element of $ \mathbf{G}$, which thus solves \eqref{dualdualcomplete}. We point out  however  that the  results on the extended dual problem  in \cite{Leo-ent} do not   apply here (since our function $w\mapsto \gamma_y( (w)_-)$ vanishes) but we will still  be able to  follow the abstract   method    of  \cite{LeoAbstract} on which \cite{Leo-ent} relies and conclude similarly.
 
 Let us thus  introduce the  extension of  problem  \eqref{dualdualcomplete}.  We denote by $\widetilde{L_{\eta}}$
the algebraic dual of $L_{\eta^*}$ and  by  $\langle\cdot , \cdot \rangle$ the associated dual product. 
We also consider  the space $ \widetilde{\mathbf{G}} $ defined as  the algebraic dual of $\mathbf{F}$, and  we write  $\langle\cdot , \cdot \rangle$ for  the corresponding dual product   as well (which dual product  is meant should be clear from the context). Observe that the operator $\Theta_1: L_{\eta^*} \to \mathbf{F}$ naturally induces the extension 
 $\Theta_1^*: \widetilde{\mathbf{G}}\to \widetilde{L_{\eta}}$ of $\Theta_1^*: \mathbf{G}\to L_{\eta}$ given by
 $$\langle \Theta_1^*(g), Z\rangle = \langle g , \Theta_1(Z)\rangle \, , \quad (g,Z)\in \widetilde{\mathbf{G}}\times  L_{\eta^*}.$$
 Introduce  also the convex functions
$ \Phi_y(W):= y \int \gamma(W ) d\R    \, , \quad W\in  L_{\eta},$
\begin{equation}\label{Phi*yZ} \Phi^*_y(Z):= \int \gamma_y^*(Z) d\R =\sup_{W\in  L_{\eta}} \EE(Z W) -   y \int \gamma(W ) d\R   \, , \quad Z\in  L_{\eta^*}
\end{equation}
the last equality, thanks to Proposition \ref{quienes} a), and 
$$\overline{\Phi}_y(\zeta):=\sup_{ Z\in  L_{\eta^*} } \langle \zeta,Z\rangle -  \Phi^*_y(Z) \,,  \quad \zeta \in  \widetilde{L_{\eta}} \, \,. $$
 With this elements, the extended dual problem is defined as: 
 \vspace{.2cm}
\begin{equation}
\textit{Maximize } \inf_{f \in \mathbf{C} } \langle g,f  \rangle - \overline{\Phi}_y(\Theta_1^*(g)) \mbox{ , } g \in \widetilde{\mathbf{G}}. \tag{$\widetilde{D}_y$}
\end{equation} 

Recall next that a topological vector space $L$ endowed with a partial order $\leq $ is  called a Riesz space if  $\leq $ is a lattice:  $\forall \, \ell_1,\ell_s\in L$, $\exists \, \ell_1\vee \ell_2\in L$ such that $\ell_1\vee \ell_2\geq \ell_1,  \ell_2$ and   $\ell_1\vee \ell_2\leq \ell $ $\forall \ell\in L$ such that $\ell\geq \ell_i, i=1,2$.  Given $\ell\in L$, the elements $\ell_+,\ell_-$ and $|\ell|$ are then  defined  in a similar way as in $\RR$.  A dual order also written $\leq $ is induced in the algebraic dual  $L'$  of $L$. By Riesz' Theorem,  the space $L^b:=\{ \zeta\in L' :  
    \sup\limits_{\ell' \in  L,\, | \ell'| \leq \ell} |\langle \zeta , \ell' \rangle|<\infty \, \forall  \ell\in  L, \ell \geq 0\}$ of ``relatively bounded linear forms'', or 
 order dual   of $L$,   is a Riesz space too. In particular,  $ \zeta\in L^b$  admits a unique decomposition   $\zeta=\zeta_+ - \zeta_-$ 
  into positive and negative parts $\zeta_+,\zeta_-  = (-\zeta)_+ \in  L^b$,   with  $\zeta_+\geq 0$  and 
$  \langle \zeta_+ , \ell\rangle := \sup\limits_{ \ell' \in  L, \, 0\leq \ell' \leq \ell} \langle \zeta , \ell' \rangle.$
A  complete Riesz space $L$ endowed with a norm $\| \cdot\|$  such that $|\ell_1|\leq |\ell_2|\Rightarrow  \|\ell_1\|\leq \|\ell_2\|$ is called a Banach lattice, and  its order dual $L^b$ and topological dual $L^*$ coincide. We refer the reader to  \cite{Aliprantis}  Ch. 8 and 9 for  these facts and  background on Riesz spaces. 

\smallskip 
The remainder statement in  part a) of Theorem  \ref{teocompletocaractintro}, i.e.  the existence of a solution to   \eqref{dualdualcomplete}  will  follow from the two next results:

\smallskip

 \begin{lemma}\label{existdualextend}
Suppose   Assumption \ref{suposQLeo}  holds and   that
 $\mathbf{C}_1\cap \mbox{icor dom} (\Gamma^*_{y})  \neq \emptyset$.  Then,  the extended dual problem $\widetilde{D}_y$ has  a  solution. 
  \end{lemma}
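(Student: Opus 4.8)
The goal is to establish existence of a maximizer for the extended dual problem $(\widetilde{D}_y)$ over the algebraic dual $\widetilde{\mathbf{G}}$ of $\mathbf{F}$, under the geometric qualification condition $\mathbf{C}_1\cap\mathrm{icor}\,\mathrm{dom}(\Gamma^*_y)\neq\emptyset$. The natural strategy is to recognize $(\widetilde{D}_y)$ as an instance of the abstract entropy-minimization framework of \cite{LeoAbstract}, identifying our objects with those therein: the convex integral functional $\Phi^*_y(Z)=\int\gamma^*_y(Z)\,d\R$ on $L_{\eta^*}$ plays the role of the entropy, $\Theta_1$ is the constraint operator into $\mathbf{F}$, and $\mathbf{C}$ is the convex constraint set. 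The primal problem \eqref{dualcomplete} is the minimization of $\Phi^*_y$ subject to $\Theta_1(Z)\in\mathbf{C}$, and $(\widetilde{D}_y)$ is its (extended) Lagrangian dual obtained by dualizing only the constraint $\Theta_1(Z)\in\mathbf{C}$. As the excerpt already notes, the results of \cite{Leo-ent} do not apply verbatim because here $w\mapsto\gamma_y((w)_-)$ vanishes, so I would instead invoke the more primitive existence result of \cite{LeoAbstract} directly.

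First I would verify the structural hypotheses of the abstract existence theorem in \cite{LeoAbstract}. The key points are: (i) $\Phi^*_y$ is a proper convex lower-semicontinuous functional on $L_{\eta^*}$ whose convex conjugate is the integral functional $\Phi_y(W)=y\int\gamma(W)\,d\R$ on $L_\eta$ (this is exactly the content of \eqref{Phi*yZ}, justified by Proposition \ref{quienes}(a)); (ii) the spaces $L_{\eta^*}$ and $L_\eta$ form a dual pair of Orlicz--Musielak spaces in separating duality (Remark \ref{dualKoz}), with $\widetilde{L_\eta}$ the algebraic dual of $L_{\eta^*}$; and (iii) the constraint data $(\Theta_1,\mathbf{C})$ fit the abstract template, with $\Theta_1^*$ extended to $\widetilde{\mathbf{G}}\to\widetilde{L_\eta}$ as defined just above the lemma statement. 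The qualification condition $\mathbf{C}_1\cap\mathrm{icor}\,\mathrm{dom}(\Gamma^*_y)\neq\emptyset$ is precisely the constraint-qualification hypothesis under which \cite{LeoAbstract} guarantees that the extended dual attains its supremum; here $\Gamma^*_y$ is the value functional \eqref{Gamma*} whose domain encodes the feasible constraint levels.

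The mechanism of the existence proof I would reproduce is the standard one from abstract convex duality: one shows that the primal value $v(y)$ is finite (guaranteed by $\mathbf{C}_1\cap\mathrm{dom}(\Gamma^*_y)\neq\emptyset$, which follows from the intrinsic-core assumption), that the extended dual functional $g\mapsto\inf_{f\in\mathbf{C}}\langle g,f\rangle-\overline{\Phi}_y(\Theta_1^*(g))$ is concave and upper-semicontinuous on $\widetilde{\mathbf{G}}$, and that the intrinsic-core qualification forces the supremum to be attained in the \emph{algebraic} dual $\widetilde{\mathbf{G}}$. The role of $\overline{\Phi}_y$, the biconjugate-type functional defined as the $\langle\cdot,\cdot\rangle$-conjugate of $\Phi^*_y$ on $\widetilde{L_\eta}$, is exactly to make the extended problem well-posed on the larger algebraic dual where compactness-free attainability holds; the geometric (rather than topological) nature of $\mathrm{icor}$ is what allows one to dispense with any weak-compactness hypothesis on $\mathbf{C}$.

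\textbf{Main obstacle.} The delicate point will be confirming that the vanishing of $w\mapsto\gamma_y((w)_-)$ does not obstruct the application of \cite{LeoAbstract}, since this is precisely the feature that blocks the direct use of \cite{Leo-ent}. Concretely, I expect the difficulty to lie in checking that $\overline{\Phi}_y$ and $\Phi^*_y$ still form a conjugate pair with the required properties and in verifying that the abstract qualification condition of \cite{LeoAbstract} translates correctly into $\mathbf{C}_1\cap\mathrm{icor}\,\mathrm{dom}(\Gamma^*_y)\neq\emptyset$ through the identifications of the preceding paragraph. One must be careful that the extended solution lives a priori only in $\widetilde{\mathbf{G}}$; promoting it to an element of $\mathbf{G}$ (hence a genuine solution of \eqref{dualdualcomplete}) is deferred to the companion result, so for this lemma it suffices to produce a maximizer of $(\widetilde{D}_y)$ in $\widetilde{\mathbf{G}}$, which is what the abstract theorem delivers once its hypotheses are matched.
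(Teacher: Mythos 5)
Your proposal is correct and takes essentially the same route as the paper: the paper's proof is exactly a one-line application of the abstract dual-attainment theorem (Theorem 5.3 of \cite{Leo-ent}, which is the abstract machinery of \cite{LeoAbstract} that you invoke) with the identifications ${\cal U}=L_{\eta}={\cal U}''$, ${\cal L}=L_{\eta^*}$, ${\cal X}=\mathbf{F}$, ${\cal Y}=\mathbf{G}$ and $\Phi_y,\Theta_1$ in the roles of $\Phi_0,T_0$, under the same icor qualification. One small correction: the vanishing of $w\mapsto\gamma_y\left((w)_-\right)$ does not obstruct this existence theorem itself (the paper applies it directly); it only blocks the entropy-specific results of \cite{Leo-ent} on the extended dual solution, which is why the promotion of the maximizer from $\widetilde{\mathbf{G}}$ to $\mathbf{G}$ is handled separately via Lemma \ref{domPhi} and Theorem 5.7 of \cite{Leo-ent} --- exactly the step you correctly defer.
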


\begin{proof} Existence follows applying Theorem   5.3 in \cite{Leo-ent} to  ${\cal U}=L_{\eta}={\cal U}''$, ${\cal L}=L_{\eta^*}$, ${\cal X}= \mathbf{F}$ and ${\cal Y}= \mathbf{G}$  with   our functions $\Phi_y$ and $\Theta_1$ in  the respective roles of functions  $\Phi_0$ and $T_0$ therein (notice that we have interchanged here the roles of the symbols $'$ and $^*$, used  therein to respectively  denote topological or algebraic dual spaces).  
\end{proof}

\smallskip

 \begin{lemma}\label{domPhi} Let  $\zeta\in \widetilde{L_{\eta}}$   be such that $  \overline{\Phi}_y(\zeta)<\infty.$ Then,  $\zeta$ belongs to the order dual of $Z\in L_{\eta^*}$; in particular there exists $ W^{ \zeta }  \in L_{\eta}$ such that $\langle \zeta , Z\rangle = \EE ( W^{ \zeta } Z)$ for all $Z\in L_{\eta^*}$. 
  Moreover, we have
\begin{equation}\label{phiphiphi}\overline{\Phi}_y (\zeta)=   \overline{\Phi}_y (\zeta_+)=\overline{\Phi}_{y,+}(\zeta)= \Phi_y ( (W^{ \zeta })_+ )
\end{equation}
 where  for $\zeta \in  \widetilde{L_{\eta}} $ we define
 $
 \overline{\Phi}_{y,+}(\zeta):=\sup_{ Z\in  L_{\eta^*} } \langle \zeta,Z\rangle -  \Phi^*_{y,+}(Z)$, with $ \Phi^*_{y,+}(Z) := \int \gamma_y^*(|Z| ) d\R.$
  \end{lemma}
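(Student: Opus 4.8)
The plan is to leverage that, under Assumption \ref{Usuposdelta} (in force here through Assumption \ref{suposQLeo}), both $\eta^*$ and $\eta$ satisfy $\Delta_2$, so by Theorem \ref{OMreflex} the Orlicz space $L_{\eta^*}=E_{\eta^*}$ is order continuous and its order (equivalently, topological) dual is $L_\eta$ acting by integration. This is what makes the statement tractable: there are no singular functionals to contend with, so it suffices to show that finiteness of the conjugate over the \emph{algebraic} dual $\widetilde{L_\eta}$ already pins $\zeta$ down to the order dual, and then to carry out an elementary convex-analytic computation. I would first record two preliminary facts: since $\gamma^*_y(z)=+\infty$ for $z<0$, the domain of $\Phi^*_y$ is contained in the positive cone and $\overline\Phi_y(\zeta)=\sup\{\langle\zeta,Z\rangle-\int\gamma^*_y(Z)\,d\R:\ Z\in L_{\eta^*},\ Z\geq 0\}$; and, by $\Delta_2$, $\int\gamma^*_y(Z)\,d\R<\infty$ for every $Z\in L_{\eta^*}$.

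The central step is the order--dual membership. This is precisely the point where the vanishing of $\gamma_y$ on $(-\infty,0]$ rules out the recession--function argument of \cite{Leo-ent}, so I would instead argue directly, in the spirit of \cite{LeoAbstract}, using monotonicity. Fix $Z\geq 0$ in $L_{\eta^*}$ and put $M:=\overline\Phi_y(\zeta)+\int\gamma^*_y(Z)\,d\R<\infty$. Testing the supremum at any $Z'$ with $0\leq Z'\leq Z$ and using that $\gamma^*_y$ is nondecreasing on $[0,\infty)$ (Lemma \ref{lemagama}) gives $\langle\zeta,Z'\rangle\leq\overline\Phi_y(\zeta)+\int\gamma^*_y(Z')\,d\R\leq M$; applying the same bound to $Z-Z'\in[0,Z]$ yields $\langle\zeta,Z'\rangle=\langle\zeta,Z\rangle-\langle\zeta,Z-Z'\rangle\geq\langle\zeta,Z\rangle-M$. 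Writing a general $Z'$ with $|Z'|\leq Z$ as $Z'_+-Z'_-$ with $0\leq Z'_\pm\leq Z$, this shows $\sup_{|Z'|\leq Z}|\langle\zeta,Z'\rangle|<\infty$, i.e.\ $\zeta$ is order bounded. Hence $\zeta\in L_{\eta^*}^{b}=(L_{\eta^*})^*\cong L_\eta$, producing $W^\zeta\in L_\eta$ with $\langle\zeta,Z\rangle=\E(W^\zeta Z)$ for all $Z\in L_{\eta^*}$.

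With the representation at hand I would evaluate the conjugate by exchanging supremum and integral on the positive cone. The pointwise Fenchel inequality $W^\zeta Z-\gamma^*_y(Z)\leq\gamma_y(W^\zeta)$ (valid for $Z\geq 0$) gives $\overline\Phi_y(\zeta)\leq\int\gamma_y(W^\zeta)\,d\R$, while the reverse inequality follows by selecting measurably, for a.e.\ $\omega$, a near--maximizer $z\in[0,\infty)$ of $w\mapsto W^\zeta(\omega)w-\gamma^*_y(w)$, truncating it so that it stays in $L_{\eta^*}$, and passing to the limit by monotone convergence; this is the one--sided counterpart of the modular duality of Remark \ref{dualKoz}, cf.\ \cite{Kozekint}. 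Since $\gamma_y\equiv 0$ on $(-\infty,0]$ we have $\gamma_y(W^\zeta)=\gamma_y((W^\zeta)_+)$, whence $\overline\Phi_y(\zeta)=\int\gamma_y((W^\zeta)_+)\,d\R=\Phi_y((W^\zeta)_+)$, the last of the claimed identities.

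It remains to reduce to the positive part $\zeta_+$, now legitimate because $\zeta$ lies in the lattice $L_{\eta^*}^{b}$ (see the Riesz--space facts recalled above and \cite{Aliprantis}). For $Z\geq 0$ the bound $\langle\zeta,Z\rangle\leq\langle\zeta_+,Z\rangle$ gives $\overline\Phi_y(\zeta)\leq\overline\Phi_y(\zeta_+)$, and the defining formula $\langle\zeta_+,Z\rangle=\sup_{0\leq Z'\leq Z}\langle\zeta,Z'\rangle$ combined with the monotonicity $\Phi^*_y(Z')\leq\Phi^*_y(Z)$ yields the reverse inequality, so $\overline\Phi_y(\zeta)=\overline\Phi_y(\zeta_+)$. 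Finally, since $\Phi^*_{y,+}(Z)=\Phi^*_{y,+}(|Z|)=\Phi^*_y(|Z|)$ and $\zeta_+\geq 0$, the supremum defining $\overline\Phi_{y,+}(\zeta_+)$ is attained over the positive cone, where $\Phi^*_{y,+}$ and $\Phi^*_y$ agree; hence $\overline\Phi_{y,+}(\zeta_+)=\overline\Phi_y(\zeta_+)$, closing the chain $\overline\Phi_y(\zeta)=\overline\Phi_y(\zeta_+)=\overline\Phi_{y,+}(\zeta_+)=\Phi_y((W^\zeta)_+)$. The only genuinely delicate point is the supremum--integral interchange against the cone--constrained integrand in the third paragraph, where one must guarantee that the selected maximizer can be kept inside $L_{\eta^*}$; the order--dual membership and the positive--part identities are then routine consequences of monotonicity and the lattice structure.
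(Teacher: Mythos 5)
Your proof is correct, and it takes a genuinely different, self-contained route from the paper. The paper's proof verifies the hypotheses of the abstract Proposition 5.10 of \cite{Leo-ent}: it extends $\Phi^*_y$ to the algebraic dual of $L_{\eta}$, shows via the norm estimate $|\langle \xi, W\rangle| \leq (\Phi^*_{y,+}(\xi)+y)\|W\|_{L_{\eta}}$ that $\mathrm{dom}\, \Phi^*_{y,+} = L_{\eta^*}$ (so that, in L\'eonard's notation, $L = L_+ = L_{\eta^*}$ and $L_- = \{0\}$), and then quotes parts a) and b) of that proposition to get both the order-boundedness of $\zeta$ and the first two identities in \eqref{phiphiphi}; reflexivity of $L_{\eta^*}$ together with Theorem 9.11 of \cite{Aliprantis} identifies the relatively bounded forms with $L_{\eta}$, and the last identity follows by biconjugacy, since $\overline{\Phi}_y$ coincides with $\Phi_y$ on $L_{\eta}$. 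You replace L\'eonard's machinery by a direct order-interval estimate --- your bound $\langle \zeta, Z'\rangle \in [\langle \zeta, Z\rangle - M,\, M]$ for $0\leq Z'\leq Z$ is in effect the mirror image, on the other side of the duality, of the paper's norm bound quoted above --- then use the same lattice/$\Delta_2$ ingredients to represent $\zeta$ by $W^{\zeta}\in L_{\eta}$, and compute $\overline{\Phi}_y(\zeta) = \int \gamma_y\left((W^{\zeta})_+\right) d\R$ by a one-sided supremum--integral interchange instead of biconjugacy. The interchange is indeed the only delicate step, and it works as you sketch: since $z\mapsto W^{\zeta}z - \gamma^*_y(z)$ is nondecreasing on $[0,z^*]$ (with $z^*$ the pointwise maximizer) and bounded functions lie in $L_{\eta^*}$, truncated measurable selections increase monotonically to the pointwise supremum; alternatively one can shortcut it by replacing $Z$ with $Z\ind{\{W^{\zeta}\geq 0\}}$ and invoking the even-case duality of Remark \ref{dualKoz}. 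Note also that your argument uses $\Phi^*_{y,+}(Z)<\infty$ for \emph{every} $Z\geq 0$ in $L_{\eta^*}$ to make $M$ finite, which is exactly where condition \eqref{Vdelta} (in force through Assumption \ref{suposQLeo}) enters. What each approach buys: the paper's proof is shorter given the external reference and fits the general entropy-minimization framework; yours is elementary and makes transparent precisely where the monotonicity of $\gamma^*_y$, the vanishing of $\gamma_y$ on $(-\infty,0]$, and Assumption \ref{Usuposdelta} are used.

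One point worth recording: your chain ends with $\overline{\Phi}_{y,+}(\zeta_+)$, whereas \eqref{phiphiphi} as printed has $\overline{\Phi}_{y,+}(\zeta)$. Your version is the correct reading. Indeed, for $\zeta$ represented by $W^{\zeta}\equiv -1$ one has $\overline{\Phi}_y(\zeta) = 0 < \infty$ (the supremum over the positive cone is attained at $Z=0$), while $\overline{\Phi}_{y,+}(\zeta) = \sup_{t\geq 0}\left(t - \gamma^*_y(t)\right) = y\, U^{-1}(1) > 0$; the two quantities agree in general only after replacing $\zeta$ by $\zeta_+$ (equivalently, when $\zeta_- = 0$). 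So your blind proof in fact silently corrects a typo in the statement, consistently with what the paper's own proof establishes via L\'eonard's proposition.
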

  The proof of Lemma \ref{domPhi}  relies on Proposition 5.10 in  \cite{Leo-ent} and is given in the Appendix.  

\smallskip

We can now finish the proof of  part a) of Theorem  \ref{teocompletocaractintro}. Indeed,  Lemma \ref{existdualextend} ensures the existence of a solution $\tilde g\in \widetilde{\mathbf{G}}$ to  $\widetilde{D}_y$ which, thanks to  Lemma \ref{domPhi}, is such that   $\Theta_1^*(\tilde {g})\in  L_{\eta}$. By Theorem  5.7, a) in \cite{Leo-ent} (taking  there ${\cal X}=\mathbf{F}$, ${\cal X}^*=\tilde{\mathbf{G}}$  and  $\Lambda:= \bar{\Phi}_y\circ \Theta_1^*: \mathbf{G} \to [0,\infty]$), for some net $\{g_{\alpha}\}\subset  \mathbf{G}$ such that $ \bar{\Phi}_y(\Theta_1^*(g_{\alpha}))<\infty$ 
one has $\langle  g_{\alpha} , \Theta_1(Z)\rangle \to \langle  \tilde g, \Theta_1(Z)\rangle$  for all $Z\in L_{\eta^*}$.  In other words, $\Theta_1^*( g_{\alpha})\in L_{\eta} $ converges $\sigma(L_{\eta},L_{\eta^*})-$weakly  to  $\Theta_1^*(\tilde {g})\in  L_{\eta}$. The  set 
 $\Theta_1^*(\mathbf{G}_1)$ being convex, its  $\sigma(L_{\eta},L_{\eta^*})-$weak closure and its norm closure in $L_{\eta}$ coincide. The previous  and the  definition of $\mathbf{G}$ thus imply that $\tilde {g} \in \mathbf{G}$, hence  $\tilde {g}$  solves  problem \eqref{dualdualcomplete} too.

{\it Part b):}  We use  Theorem 5.4 in \cite{Leo-ent} stating that, in the present context,  a pair $(Z,g)\in L_{\eta^*}\times \widetilde{\mathbf{G}}$ is such that $Z$ solves   \eqref{dualcomplete} and $g$ solves $ \widetilde{D}_y$ if and only if the following hold: 
 \begin{equation}\label{charactabst}
  \begin{cases} 
 \bullet  &    \Theta_1\left(Z \right) \in  \mathbf{C}  \\
\bullet  &  \langle   \Theta_1^* (  g) ,  Z  \rangle_{\widetilde{L_{\eta}}, L_{\eta^* }}  \leq     \langle  \Theta_1^* (  g)  , Z ' \rangle_{\widetilde{L_{\eta}}, L_{\eta^* } } \, \mbox{  for all }   Z'    \in   \mbox{ dom }   \Phi^*_y   \mbox{  such that }  \Theta_1\left(Z' \right) \in \mathbf{C} \\
\bullet &  Z\in \partial_{L_{\eta^*}} \overline{\Phi_y} ( \Theta_1^* (  g)). 
 \end{cases}
 \end{equation}
 Note that, since  $\gamma(-|\cdot|)=0$, by part c) of Proposition 5.10 in  \cite{Leo-ent}   the third point is  always equivalent to
 $ Z\geq 0 $ and $ Z\in \partial_{L_{\eta^*}} \overline{\Phi}_{y,+} ( \Theta_1^* ( g))$. 
 
 Now, assume that $(Z,g) \in L_{\eta^*}\times \mathbf{G}$ solve \eqref{dualcomplete}
  and   \eqref{dualdualcomplete}.  We  have from \eqref{phiphiphi} that   $Z\in \partial_{L_{\eta^*}} \Phi_{y} (( \langle g, \theta_1\rangle) _+)$. Observe  that  $ \Phi_y$ is  G\^ateaux  differentiable in  $L_{\eta }$ with derivative at point $W\in L_{\eta }$ given by $y \gamma' (W)\in L_{\eta^* }$, as follows by dominated convergence using  the equality $\gamma(W)+\gamma^*\circ\gamma'(W)= W \gamma'( W)$ and the bounds $\gamma(2z)-\gamma(z)\geq \gamma'(z)z$ if $z\geq 0$ (by mean value theorem and increasingness of $\gamma$) and  $(k-1)\gamma(z)+d\geq  \gamma'(z)z  $ (with the  notation in \eqref{U-1delta} and where, necessarily,  $k\geq 1$). We deduce that the third point in  b) of Theorem  \ref{teocompletocaractintro}  is satisfied.  Moreover, the space $\widetilde{L_{\eta}}$ in the second point in \eqref{charactabst} 
can be replaced by $L_{\eta}$. Using the ``little dual equality''   deduced from part a) when $C_0=\{f\}$ is  a singleton with $f\in  \mathbf{F}$: 
\begin{equation}\label{little dual}
\Gamma_y^*(f)=\{\Phi_y^*(Z): Z\in L_{\eta^*}, \Theta_1(Z)=f\} \, (=  \{ \E (\gamma^*_y(Z)): Z\in L_{\eta^*}, \Theta_1(Z)=f\} )
\end{equation}
 (or  proved in part a) of Proposition 5.7 of \cite{LeoAbstract}),  we easily obtain, with the surjectivity of $\Theta_1$  the first and second points in  part b) of Theorem  \ref{teocompletocaractintro}.  

Reciprocally, if  the pair $(Z,g) \in L_{\eta^*}\times \mathbf{G}\subset L_{\eta^*}\times \widetilde{\mathbf{G}} $  satisfies the  three points  in the statement,   in a similar way it  is seen to satisfy the conditions in \eqref{charactabst} and thus solve \eqref{dualcomplete} and  $ \widetilde{D}_y$.  Since $g\in  \mathbf{G}$, it solves  \eqref{dualdualcomplete}  and the proof of part b) of Theorem  \ref{teocompletocaractintro}  is finished. 

{\it Part c):} since the assumptions of Theorem \ref{minimaxsinreflexividad}  hold true under Assumption  \ref{suposQLeo},  part  c) of Theorem  \ref{teocompletocaractintro} follows from part a) since $u$ and $v$ are  conjugate.

\section{Modular spaces and the incomplete case}
\label{incompletecase}

In this section, the robust optimization problem in the incomplete market case will be explored. Essentially the aim is to prove here Theorems \ref{minimaxsinreflexividad} and \ref{tristeintro}. In Subsection \ref{mod intro} the natural extension from the Orlicz-Musielak setting to the modular one will be motivated. Likewise the potential usefulness of this extension to the robust optimization problem will be sketched. Then in Subsection \ref{topomod} and the following one, the machinery of modular spaces and its link to the problem of robust optimization will be fully explored. The main result here is the proof of Theorem \ref {minimaxsinreflexividad}. The second crucial result is then Theorem \ref{triste}, a slight extension of Theorem \ref{tristeintro}, and the remarks thereafter. 

\subsection{Modular space associated with the incomplete case}
\label{mod intro}

Let us recall the notation :
$$\eta^*_{Y}(z)=\vert z\vert V(Y/\vert z\vert)\,\, \,\, \mbox{and}\,\, \,\,\eta_{Y}(x)= YU^{-1}(\vert x\vert),$$
of Definition \ref{defietas} and re-introduce the important functionals we already saw in Section \ref{motivgen}:
$$
\begin{array}{rcl}
I(Z):=&\inf_{Y\in\Y}\E[\eta_Y^*(Z)]&=
\hspace{0.5cm} \inf_{Y\in\Y}\E[\vert Z\vert V(Y/\vert Z\vert)],\vspace{5pt}\\ 
J(X):=& \sup_{Y\in \Y}\E[\eta_Y( X)] &=\hspace{0.5cm} \sup_{Y\in \Y}\E[YU^{-1}(\vert X\vert)].
\end{array}
$$

We start by observing that for the set
$$\Y^*=\{Y\in\Y:Y>0 \mbox{ a.s. and }\forall \beta>0, \E[V(\beta Y)]<\infty\},$$
and under Assumption \ref{assgnral}, we may compute $I$ and $J$ on $\Y^*$ simply. More exactly:\medskip

\begin{lemma}
\label{suposdensonec}
We have:
\begin{itemize}

\item[(i)]
If point 2.\ in Assumption \ref{assgnral} holds, then $$I(Z)=\inf_{Y\in \Y^*}\E[\eta_Y^*(Z)]\,\,\,\,\,\, \,\,\,\,\mbox{and}\,\,\,\,\,\, \,\,\,\,J(X)=\sup_{Y\in \Y^*}\E[\eta_Y(X)].$$

\item[(ii)]
If either the reference measure $\Prob$ is a martingale measure, or there is a continuous $\R -$local martingale $M$ and $\lambda\in L^2(M )$ such that the price process satisfies $dS_t=dM_t+\lambda_t\cdot d\langle M\rangle_t$ and $\E\left[V(\beta \mathcal{E}(-\int\lambda d M)_T)\right]<\infty$ for every $\beta > 0$, where $\mathcal{E}$ stands for the stochastic exponential, then $\Y^*\neq\emptyset$, i.e.\ point 2.\ in Assumption \ref{assgnral} holds.


\end{itemize}

\end{lemma}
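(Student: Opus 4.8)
The plan is to treat the two assertions separately, since both amount to showing that passing from the index set $\Y$ to the subset $\Y^*$ does not change the value of the infimum $I$ nor of the supremum $J$ (part (i)), and that $\Y^*$ is non-empty in the two market situations of (ii). The inclusion $\Y^*\subset\Y$ already gives $I(Z)\le\inf_{Y\in\Y^*}\E[\eta_Y^*(Z)]$ and $\sup_{Y\in\Y^*}\E[\eta_Y(X)]\le J(X)$, so only the reverse inequalities need work. The main device for these is a convexity/approximation step: fix some $Y_0\in\Y^*$ (available by the standing hypothesis) and, for $Y\in\Y$ and $\epsilon\in(0,1)$, set $Y_\epsilon:=(1-\epsilon)Y+\epsilon Y_0$. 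First I would verify $Y_\epsilon\in\Y^*$: convexity of $\Y$ (the supermartingale property of $XY$ is preserved under convex combinations, and $Y_\epsilon$ starts at $1$) gives $Y_\epsilon\in\Y$; strict positivity follows from $Y_0>0$ and $\epsilon>0$; and since $V$ is decreasing, $V(\beta Y_\epsilon)\le V(\beta\epsilon Y_0)$, so $\E[V(\beta Y_\epsilon)]\le\E[V(\beta\epsilon Y_0)]<\infty$ for every $\beta>0$.

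For $J$ the argument is clean because $Y\mapsto\E[Y\,U^{-1}(|X|)]$ is \emph{linear} in $Y$: writing $W:=U^{-1}(|X|)\ge0$ one has $\E[Y_\epsilon W]=(1-\epsilon)\E[YW]+\epsilon\E[Y_0W]$. If $J(X)=\infty$ then either $\E[Y_0W]=\infty$ (so $\sup_{\Y^*}=\infty$ directly) or, along a sequence $Y_n\in\Y$ with $\E[Y_nW]\to\infty$, the combinations $Y_{n,1/2}\in\Y^*$ satisfy $\E[Y_{n,1/2}W]=\tfrac12\E[Y_nW]+\tfrac12\E[Y_0W]\to\infty$; in both cases $\sup_{\Y^*}=\infty=J(X)$. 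If $J(X)<\infty$ then every $\E[YW]$ is finite, in particular $\E[Y_0W]<\infty$, and letting $\epsilon\downarrow0$ gives $\sup_{\Y^*}\E[Y_\epsilon W]\ge\E[YW]$; taking the supremum over $Y\in\Y$ yields $\sup_{\Y^*}\ge J(X)$.

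For $I$ the functional is only convex, not linear. By convexity of $z\mapsto\eta_\bullet^*(z)$ in the density argument (Lemma \ref{lemagama}) one has $\eta_{Y_\epsilon}^*(Z)\le(1-\epsilon)\eta_Y^*(Z)+\epsilon\,\eta_{Y_0}^*(Z)$, hence $\E[\eta_{Y_\epsilon}^*(Z)]\le(1-\epsilon)\E[\eta_Y^*(Z)]+\epsilon\,\E[\eta_{Y_0}^*(Z)]$; sending $\epsilon\downarrow0$ along a near-minimizing $Y$ would give $\inf_{\Y^*}\le I(Z)$. The delicate point, and the main obstacle, is the finiteness of $\E[\eta_{Y_0}^*(Z)]$, which is \emph{not} implied by $Y_0\in\Y^*$ alone, because of the extra factor $|Z|$ and the inversion in $\eta_{Y_0}^*(Z)=|Z|V(Y_0/|Z|)$. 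I would handle this pointwise by splitting $\Omega$: on $B:=\{Y_0>Y\}$ one has $Y_\epsilon\ge Y$, so $\eta_{Y_\epsilon}^*(Z)\le\eta_Y^*(Z)$ and the integrable majorant $\eta_Y^*(Z)$ lets monotone/dominated convergence apply; on $A:=\{Y_0\le Y\}$ only $Y_\epsilon\ge(1-\epsilon)Y$ is available, so the scaling $V((1-\epsilon)\,\cdot\,)$ must be controlled, which needs either a $\Delta_2$-type estimate on $V$ as in \eqref{Vdelta} or a sufficiently careful choice of $Y_0\in\Y^*$ approximating $Y$ from below so that $\E[\eta_{Y_0}^*(Z)\mathbf{1}_A]<\infty$. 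The case $I(Z)=\infty$ is immediate, since then $\E[\eta_Y^*(Z)]=\infty$ for every $Y\in\Y\supseteq\Y^*$.

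For part (ii) it suffices to produce one element of $\Y^*$. If $\Prob$ is a martingale measure, every $X\in\mathcal{X}(1)$ is a nonnegative $\Prob$-local martingale, hence a $\Prob$-supermartingale, so the constant process $Y\equiv1$ lies in $\Y$; being strictly positive with $\E[V(\beta)]=V(\beta)<\infty$, it belongs to $\Y^*$. In the general continuous case I would take $Y:=\mathcal{E}(-\int\lambda\,dM)$, the minimal-martingale-measure density, which is strictly positive with $Y_0=1$ and satisfies $\E[V(\beta Y_T)]<\infty$ by hypothesis. The key verification is $Y\in\Y$: for $X=1+\int H\,dS=1+\int H\,dM+\int H\lambda\,d\langle M\rangle$, Itô's product rule with $dY=-Y\lambda\,dM$ and $d\langle X,Y\rangle=-HY\lambda\,d\langle M\rangle$ gives, after cancellation of the finite-variation terms, $d(XY)=(YH-XY\lambda)\,dM$. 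Thus $XY$ is a nonnegative local $\Prob$-martingale, hence a $\Prob$-supermartingale, so $Y\in\Y_\Prob(1)$ and therefore $Y\in\Y^*$.
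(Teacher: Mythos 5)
Your overall strategy is exactly the paper's: perturb an arbitrary $Y\in\Y$ by a fixed $Y_0\in\Y^*$ via $Y_\epsilon=(1-\epsilon)Y+\epsilon Y_0$, observe $Y_\epsilon\geq \epsilon Y_0$ so that $Y_\epsilon\in\Y^*$ because $V$ is decreasing, exploit linearity of $Y\mapsto\E[Y\,U^{-1}(|X|)]$ for the $J$-identity (with the same case analysis when $J(X)=\infty$), and for (ii) exhibit the witnesses $Y\equiv 1$, respectively $\mathcal{E}(-\int\lambda\,dM)$. Your It\^o product-rule computation showing $d(XY)=(YH-XY\lambda)\,dM$, hence that $XY$ is a nonnegative local martingale and thus a supermartingale, is correct and in fact more detailed than the paper, which merely asserts membership in $\Y^*$. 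The $J$ half of (i) and all of (ii) are complete.

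The genuine gap is the $I$ half of (i), and you concede it yourself: after the convexity bound $\E[\eta^*_{Y_\epsilon}(Z)]\leq(1-\epsilon)\E[\eta^*_Y(Z)]+\epsilon\,\E[\eta^*_{Y_0}(Z)]$ one can only let $\epsilon\downarrow 0$ if $\E[\eta^*_{Y_0}(Z)]<\infty$, and your two proposed remedies --- a $\Delta_2$-type control as in \eqref{Vdelta}, which is \emph{not} part of Assumption \ref{assgnral}, or a ``sufficiently careful choice of $Y_0$'' --- are never carried out, so as written the proposal does not establish $I(Z)=\inf_{Y\in\Y^*}\E[\eta^*_Y(Z)]$. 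For comparison, the paper closes this in one line: with $Y^n=\frac{n-1}{n}Y+\frac1n Y^*$ it takes $\liminf_n$ in $\E[\eta^*_{Y^n}(Z)]\leq\frac{n-1}{n}\E[\eta^*_Y(Z)]+\frac1n\E[\eta^*_{Y^*}(Z)]$, a passage that is valid precisely when the last expectation is finite --- for instance for bounded $Z$, since $z\mapsto zV(y/z)$ is increasing, so $|Z|\leq c$ gives the majorant $cV(Y^*_T/c)$, integrable because $Y^*\in\Y^*$ --- but the paper's display does not address the case $\E[\eta^*_{Y^*}(Z)]=\infty$ either, so the thin spot you located is real and is the very term $\frac1n\E[\eta^*_{Y^*}(Z)]$ that the paper sends to zero. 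If you want to finish in the paper's spirit you must actually do something at this point: e.g.\ verify the needed finiteness for the class of $Z$ where the identity is invoked, or use the \emph{joint} convexity of $(y,z)\mapsto zV(y/z)$ from Lemma \ref{lemagama} with the pair $(Y_0,0)$ and $\gamma^*_{y}(0)=0$ to get $\E\left[\eta^*_{Y_\epsilon}\left((1-\epsilon)Z\right)\right]\leq(1-\epsilon)\E[\eta^*_Y(Z)]$ and then control the rescaling of the argument. Naming the obstacle and listing possible fixes, as you do, is good mathematical hygiene but is not a proof of the $I$-identity in the stated generality.
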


From the previous lemma, we can always assume to be working with $Y\in\Y^*$ at will. The proof is given in the appendix.

\smallskip

Recall that $\eta^*_{Y}(z)=\vert z\vert V(Y/\vert z\vert)$ is a ``random Young function'' induced by $Y\in\Y$. For $Y\in\Y^*$ such functions induce a space $L_{\eta^*_Y}=\{Z\in L^0:\E[\eta^*_{Y}(\alpha Z)]<\infty, \mbox{ some }\alpha>0\}$, called Orlicz-Musielak space (see Proposition \ref{quienes}), which we will denote here $L^*_Y$ for simplicity. These spaces have, as discussed in Theorem \ref{variasobsorlicz}, several equivalent norms; for instance the Luxemburg or the Amemiya norms, respectively:
$$\| Z \|^l_Y:= \inf\{\beta>0: \E[\eta^*_{Y}(\beta Z)] \leq 1\}\,\,\, \mbox{ and }\,\,\,  \| Z \|^a_Y:=\inf_{k>0}\left[\frac{1}{k}+ \frac{\E[\eta^*_{Y}(kZ)]}{k}\right].$$
We also define the spaces $L_Y$ analogously, in terms of $\eta_Y$, the conjugate of $\eta_Y^*$.\\

It is then clear that $v(y)=y \inf_{Z\in d\qu_e/d\R} I(Z/y)$. On the other hand, recall that the function $(Y,Z)\in (L^0)_+\times (L^0)_+ \mapsto \E[ZV(Y/Z)] $ is jointly convex (as $(y,z)\rightarrow zV(y/z)$ is so) and jointly lower-semicontinuous w.r.t.\ convergence in probability (see the proof of Lemma 3.7 in \cite{SchiedWu}). Also recall the following Komlos-type argument (see Lemma A.1.1 in \cite{DelSch}): if $\{A_n\}_n$ is a sequence of positive random variables bounded in $L^0$, then there is a positive finite r.v. $A$ and a sequence $B_n\in conv\{A_n,A_{n+1},\dots\}$ such that $B_n\rightarrow A$ in probability.

We associate to the functional $I$ a set, in complete analogy to Orlicz-Musielak spaces:
\begin{equation}
L_I := \left\{Z\in L^0(\Prob): I(\alpha Z)<\infty \mbox{ for some }\alpha>0\right\},\label{espaciosLJ}
\end{equation}
and define $L_J$ accordingly in terms of $J$. Now we collect some elementary observations. The reader should notice that these spaces coincide with the ones given in Section \ref{motivgen}.
\medskip

\begin{lemma}
\label{Iconvex}
The following hold:
\begin{itemize}\item  The functionals $I,J:(L^0)_+\rightarrow [0,\infty]$ are convex and moreover $I$ is lower- semicontinuous w.r.t.\ convergence in measure. Also, for each non-vanishing $Z\in dom(I)$, the infimum in $I(Z)$ is attained at some $Y\in\Y$.
\item The set $L_I$ is a linear space coinciding with $\cup_{Y\in\Y}L^*_Y$, whereas the set $L_J$ is a linear space contained in $\cap_{Y\in\Y}L_Y$. 
\item $J(M)\leq x\iff U^{-1}(|M|)\leq X_T \mbox{ for some }X\in\mathcal{X}(x)$.
\end{itemize} 
\end{lemma}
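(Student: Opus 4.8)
The plan is to treat the three assertions in turn, relying on the joint convexity and joint lower-semicontinuity (w.r.t.\ convergence in probability) of $\Psi(Y,Z):=\E[ZV(Y/Z)]$ recalled just above the statement, on the convexity of $\Y$, and on the Komlos-type lemma of \cite{DelSch}. Convexity of $J$ is immediate, since $U^{-1}$ is increasing and convex and $Y\geq 0$, so each map $X\mapsto\E[YU^{-1}(|X|)]$ is convex and $J$ is their pointwise supremum. For $I$ I would use that $\Psi$ is jointly convex while $\Y$ is convex: given $Z_0,Z_1$ and $\lambda\in[0,1]$, pick $Y_i\in\Y$ nearly attaining $I(Z_i)$ and estimate $I(\lambda Z_1+(1-\lambda)Z_0)$ by testing against $\lambda Y_1+(1-\lambda)Y_0\in\Y$, letting the approximation error vanish. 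For lower-semicontinuity and attainment I would first note that, $V$ being decreasing, the infimum defining $I$ is unchanged if $\Y$ is replaced by the convex, solid, $L^0$-closed hull $\mathcal{D}:=\{h\in L^0_+:h\leq Y_T\text{ for some }Y\in\Y\}$, whose closedness is part of the bipolar result of \cite{KrSch}; indeed enlarging $h$ only lowers $\E[ZV(h/Z)]$, so the best $h$ is a full terminal value. Then, given $Z_n\to Z$ in measure, I pass to a subsequence along which $I(Z_n)\to\liminf_n I(Z_n)=:L<\infty$ (else there is nothing to prove) and pick $h_n\in\mathcal{D}$ with $\Psi(h_n,Z_n)\leq I(Z_n)+1/n$; since every $Y\in\Y$ satisfies $\E[Y_T]\leq 1$ (take $X\equiv 1\in\X(1)$), the $h_n$ are bounded in $L^0$, so Komlos yields convex weights $(a_k^n)_k$ with $\tilde h_n:=\sum_k a_k^n h_k\to h\in\mathcal{D}$ in probability, while the same weights give $\tilde Z_n:=\sum_k a_k^n Z_k\to Z$. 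Joint lower-semicontinuity gives $I(Z)\leq\Psi(h,Z)\leq\liminf_n\Psi(\tilde h_n,\tilde Z_n)$, and joint convexity bounds $\Psi(\tilde h_n,\tilde Z_n)\leq\sum_k a_k^n\bigl(I(Z_k)+1/k\bigr)\to L$. Attainment for fixed non-vanishing $Z\in dom(I)$ is the same argument with $Z_n\equiv Z$, the minimizer $h\in\mathcal{D}$ being upgraded to an actual $Y\in\Y$ by monotonicity of $\E[ZV(\cdot/Z)]$.

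For the second bullet I would prove $L_I=\cup_{Y\in\Y}L^*_Y$ straight from the definitions: $Z\in L^*_Y$ gives $I(\alpha Z)\leq\E[\eta^*_Y(\alpha Z)]<\infty$, while $I(\alpha Z)<\infty$ forces, by definition of the infimum, some $Y$ with $\E[\eta^*_Y(\alpha Z)]<\infty$, i.e.\ $Z\in L^*_Y$. Linearity of $L_I$ then follows from convexity of $I$ and $I(0)=0$: stability under scaling is clear, and for $Z_0,Z_1\in L_I$ with $I(\alpha_iZ_i)<\infty$ one writes $\alpha(Z_0+Z_1)=\frac{1}{2}(2\alpha Z_0)+\frac{1}{2}(2\alpha Z_1)$ and uses $I(\beta Z)\leq(\beta/\alpha_i)I(\alpha_iZ)$ for $0\leq\beta\leq\alpha_i$ to keep both terms finite once $\alpha$ is small. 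The inclusion $L_J\subseteq\cap_{Y\in\Y}L_Y$ is immediate, because $J(\alpha X)<\infty$ forces $\E[\eta_Y(\alpha X)]<\infty$ for every $Y$ simultaneously; it is in general strict precisely because $J$ demands one $\alpha$ uniform in $Y$. Linearity of $L_J$ is the same convexity argument, now using $J(0)=0$, which holds since $U^{-1}(0)=0$.

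For the third bullet, set $W:=U^{-1}(|M|)\geq 0$ and note that any $Y\in\Y^*$ (which is $>0$ a.s.) shows $J(M)\leq x$ forces $W<\infty$ a.s., so $W\in L^0_+$. The implication ($\Leftarrow$) is direct: if $W\leq X_T$ with $X\in\X(x)=x\X(1)$, then for each $Y\in\Y$ the product $XY$ is a $\R$-supermartingale, whence $\E[WY_T]\leq\E[X_TY_T]\leq x$ and so $J(M)\leq x$. For ($\Rightarrow$) I would invoke the bipolar theorem of \cite{KrSch} for the polar pair $\mathcal{C}(x)=\{g\in L^0_+:g\leq X_T\text{ for some }X\in\X(x)\}$ and $\mathcal{D}$: since $W\geq 0$ one has $\sup_{Y\in\Y}\E[WY_T]=\sup_{h\in\mathcal{D}}\E[Wh]$, so $J(M)\leq x$ reads $\E[Wh]\leq x$ for all $h\in\mathcal{D}$, and the bipolar relation then gives $W\in\mathcal{C}(x)$, i.e.\ $W\leq X_T$ for some $X\in\X(x)$.

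I expect the genuine obstacle to be the lower-semicontinuity of $I$: as an infimum of the jointly lower-semicontinuous maps $\Psi(\cdot,Z)$ it is not automatically lower-semicontinuous, and the essential device is to apply one and the same family of convex weights to both arguments of $\Psi$, so that joint convexity controls the values from above while joint lower-semicontinuity controls the limit from below. This is what makes the convexity and $L^0$-closedness of $\mathcal{D}$ and the $L^0$-boundedness of the minimizing selectors the indispensable inputs; in contrast, the third bullet reduces entirely to a correct bookkeeping of the bipolar theorem of \cite{KrSch}.
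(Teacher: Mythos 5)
Your overall route is the paper's own (terse) argument, with the implicit details correctly filled in: convexity of $I$ as a partial infimum of the jointly convex $\Psi(Y,Z)=\E[ZV(Y/Z)]$, attainment and lower semicontinuity via the Komlos-type lemma of \cite{DelSch} combined with joint lower semicontinuity, linearity of $L_I,L_J$ from convexity, and the third bullet from the polar relation of Proposition 3.1 in \cite{KrSch} (which is exactly what the paper cites). Your substitution of $\Y$ by the solid, convex, $L^0$-closed hull $\mathcal{D}$, using that $V$ is decreasing, and your check that $U^{-1}(|M|)<\infty$ a.s.\ via $\Y^*\neq\emptyset$, are both correct and make precise points the paper glosses over.

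There is, however, one step that fails as written, and it sits at what you yourself identify as the crux: in the lower-semicontinuity argument you assert that the Komlos weights chosen for $(h_k)$ automatically give $\tilde Z_n:=\sum_k a_k^n Z_k\to Z$. Convergence in probability is \emph{not} preserved by convex combinations of tails: take $Z_k=k\,\ind{A_k}$ with $A_k$ independent and $\Prob(A_k)=k^{-1/2}$; then $Z_k\to 0$ in probability, yet $\frac{1}{n+1}\sum_{k=n}^{2n}Z_k\geq\frac{n}{n+1}$ on $\bigcup_{k=n}^{2n}A_k$, whose probability tends to $1$. Since the weights produced by Lemma A1.1 of \cite{DelSch} are dictated by $(h_k)$, nothing prevents them from being spread out in exactly this damaging way for $(Z_k)$. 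The repair is one line: before invoking Komlos, refine once more to a subsequence along which $Z_n\to Z$ almost surely (this preserves $I(Z_n)\to L$); then, because the weights are supported on tails, $|\tilde Z_n-Z|\leq\sum_{k\geq n}a_k^n|Z_k-Z|\leq\sup_{k\geq n}|Z_k-Z|\to 0$ a.s., hence in probability, and joint lower semicontinuity of $\Psi$ applies as you intend. Note the issue does not arise in your attainment argument, where $Z_n\equiv Z$ so $\tilde Z_n=Z$ exactly. With this patch the proof is complete and matches the paper's.
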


\medskip

\begin{proof}
For the convexity of $I$, recall that the partial infimum of every jointly convex function is convex. The fact that $I(Z)$ is attained is a consequence of the closedness and convexity of $\Y$, a Komlos-type argument and  the lower semicontinuity of $Y\mapsto\E[ZV(Y/Z)]$. This in turn implies the lower semicontinuity of $I$, now because $(Y,Z)\mapsto\E[ZV(Y/Z)]$ is l.s.c.  That $J$ is convex is a consequence of the convexity of $U^{-1}$.  The equality of the  sets mentioned  in the second point is evident from the fact that for $Z$ fixed the infimum over the $Y\in\Y$ is attained. The linearity of $L_I$ follows now from the convexity of $I$: if $I(\alpha Z),I(\beta X)<\infty$, taking $\gamma=\frac{\alpha\beta}{\alpha+\beta}$ yields $I(\gamma [Z+X])=I\left[\frac{\beta}{\alpha+\beta}[\alpha Z] + \frac{\alpha}{\alpha+\beta}[\beta X] \right]\leq \frac{\beta}{\alpha+\beta}I(\alpha Z)+\frac{\alpha}{\alpha+\beta}I(\beta X)<\infty$. The linearity of $L_J$ is proved as in the case of $L_I$. It is clear that if $X\in L_J$ then also $X\in L_Y$, for every $Y\in \Y$.
The last point goes by definition of $J$ and Proposition 3.1.ii in \cite{KrSch}.
\qquad 
\end{proof}

We shall see in the next section that $\vert Z \vert^a_I = \inf_{k>0}[\frac{1}{k} + \frac{I(kZ)}{k}]$ is a norm on $L_I$, making it a Banach space. Further this norm-topology will be stronger than that of convergence in measure. This implies immediately that $I$ will be lower-semicontinuous with respect to $\vert \cdot \vert^a_I$. In light of this, let us justify the appeal of the space $L_I$:

\smallskip 
\begin{remark} \label{ilusion}
Since $v(y)=y\inf_{Z\in d \qu_e/d\R} I(Z/y)$, and also by definition $\vert Z \vert^a_I \leq y+yI(Z/y)$, by taking a minimizing sequence $\{Z_n\}$ such that $yI(Z_n/y)$ decreases to $v(y)$ it follows that the sequence $\{Z_n\}$ would be bounded in $(L_I,\vert \cdot \vert^a_I)$.  On the other hand, we shall see in Proposition \ref{coercividad2} that $u_{\Q}(x) \geq c\vert Z \vert^a_I$. This shows that in minimizing the $u_{\Q}$'s we may restrict $\qu$ to its intersection with a given ball. Hence the two previous estimates show that requiring $d\qu/d\R$ to be closed in $(L_I,\vert \cdot \vert^a_I)$ and asking for conditions on the ingredients of the problem so that this space becomes reflexive, would allow to fully solve the robust optimization problem. We will see, however, that $L_I$ is reflexive \textit{almost exactly} when the market is complete, and that this is independent of how well-behaved our utility function is (in stark contrast to the complete case). On the other hand, because we will be able to prove that $L_J$ is a norm-dual space, and since the image through $U$ of the terminal wealths live in this space and are unifomly norm-bounded, we can still derive a minimax identity.
\end{remark}

\subsection{Modular spaces $L_F$ and $E_F$; topological/duality results}
\label{topomod}

Generating a space from a functional is a classical subject. See e.g. \cite{Nakano,Musielak}. There are quite minimalistic conditions ensuring that the generated space be an \textit{F-space} and that some related functionals form a family of pseudo-norms for it. Here, rather than working at this level of generality, a more relaxed terminology and a lighter approach (as in chapter XI in \cite{Nakano}) will be pursued.\\
We first introduce the notion of a convex modular, and then its associated modular space. We shall see that $I$ (respect.\ $J$) and $L_I$ (respect.\ $L_J$) fulfil these definitions.
\begin{definition}

A functional $F:{\cal S}\rightarrow [0,\infty]$ over a vector space ${\cal S}$  is called a \textit{Convex modular}  if the following axioms are fulfilled:

\begin{enumerate}
\item $F(0)=0$
\item $F(s)=F(-s)$
\item $\forall s\in {\cal S},\exists \lambda>0:F(\lambda s)<\infty$
\item $F(\xi s)=0$ for every $\xi>0$ implies $s=0$
\item $F$ is convex
\item $F(s)=\sup\limits_{0\leq \xi < 1} F(\xi s)$
\end{enumerate}

\end{definition}

With this definition, it follows that on the space:
$$L_F({\cal S}):=\{s\in {\cal S}: \lim_{\alpha\rightarrow 0} F(\alpha s)=0 \}= \{s\in {\cal S}: F(\alpha s)<\infty \mbox{ for some }\alpha>0\}$$
the following functionals are equivalent norms, called respectively Luxemburg and Amemiya norms:
$$\vert s \vert_F^l = \inf\{\beta>0: F(s/\beta)\leq 1 \} \mbox{ and  } \vert s \vert_F^a = \inf\left\{ \frac{1}{k}+ \frac{F(ks)}{k} :k>0\right\},$$
and actually thanks to Theorem 1.10 in \cite{Musielak}, $\vert s \vert_F^l \leq \vert s \vert_F^a\leq 2\vert s \vert_F^l$. It can be proved, as in chapter XI, 81 in \cite{Nakano}, that the topology induced by the Luxemburg norm is exactly the (weakest locally convex topology) generated by the family of neighbourhoods of the origin $\{F^{-1}(-\infty,c])\}_{c}$. The space $L_F$ is called a \textit{modular space associated to $F$}.

Now recalling the definitions in the previous subsection, we prove:\\


\begin{proposition}
\label{modfunspace}
The functional $I$ is a convex modular and $L_I$ is a modular space associated to it. Likewise, $J$ is a convex modular and $L_J$ is a modular space associated to it.
\end{proposition}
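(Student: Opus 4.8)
The plan is to verify, for each of $I$ and $J$, the six axioms in the definition of a convex modular, taking as underlying vector space $L_I$ (respectively $L_J$), which is already known to be a genuine vector space by Lemma~\ref{Iconvex}. Once the axioms hold, the associated modular space $L_F$ recovers $L_I$ (respectively $L_J$) at once: for $s\in L_I$ the requirement that $I(\alpha s)<\infty$ for some $\alpha>0$ is exactly the membership condition defining $L_I$, while convexity together with $I(0)=0$ promotes this to $\lim_{\alpha\to 0}I(\alpha s)=0$, reconciling the two descriptions of $L_F$. Several axioms come almost for free. Axiom~1 holds because $\eta_Y^*(0)=\gamma^*_{Y}(0)=0$ and $\eta_Y(0)=YU^{-1}(0)=0$ (recall $U^{-1}(0)=0$ since $U(0+)=0$), so $I(0)=J(0)=0$; axiom~2 is immediate since $\eta_Y^*$ and $\eta_Y$ depend on their real argument only through its modulus; axiom~5, convexity, is precisely the content of Lemma~\ref{Iconvex}; and axiom~3 is built into the definitions of $L_I$ and $L_J$.

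For the separation axiom~4 I would invoke strict positivity. For $I$, recall from Lemma~\ref{Iconvex} that for a nonvanishing argument in $\mathrm{dom}(I)$ the infimum over $Y\in\Y$ is attained. Since $V>0$ on $[0,\infty)$ by Lemma~\ref{lemaVUG}, one has $\eta_Y^*(z)=|z|V(Y/|z|)>0$ for every $z\neq 0$ and every $Y\geq 0$; hence, when $\Prob(Z\neq 0)>0$, the value $\E[\eta_Y^*(\xi Z)]$ is strictly positive at the attaining $Y$ (and $I(\xi Z)=+\infty$ if no minimizer exists), so that $I(\xi Z)>0$. Thus $I(\xi Z)=0$ for even a single $\xi>0$ already forces $Z=0$. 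For $J$, I would fix $Y_0\in\Y^*$ with $Y_0>0$ a.s.\ (available by part~2 of Assumption~\ref{assgnral}) and use $J(\xi X)\geq\E[Y_0U^{-1}(\xi|X|)]$; since $U^{-1}(t)>0$ for $t>0$, vanishing of $J(\xi X)$ forces $U^{-1}(\xi|X|)=0$ and hence $X=0$ a.s.

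The remaining axiom~6, namely $F(s)=\sup_{0\leq\xi<1}F(\xi s)$, is where the two functionals must be handled differently and is the main point of the argument. In both cases, convexity together with $F(0)=0$ and $F\geq 0$ makes $\xi\mapsto F(\xi s)$ nondecreasing on $[0,\infty)$, so that $\sup_{0\leq\xi<1}F(\xi s)=\lim_{\xi\uparrow 1}F(\xi s)\leq F(s)$ and only the reverse inequality needs work. For $I=\inf_{Y}\E[\eta_Y^*(\cdot)]$ an interchange of the supremum over $\xi$ with the infimum over $Y$ is not available, so I would instead use that $\xi Z\to Z$ in probability as $\xi\uparrow 1$ and appeal to the lower semicontinuity of $I$ with respect to convergence in measure (Lemma~\ref{Iconvex}), which yields $\liminf_{\xi\uparrow 1}I(\xi Z)\geq I(Z)$ and hence equality. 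For $J=\sup_{Y}\E[\eta_Y(\cdot)]$ the two suprema commute, so it suffices to show for each fixed $Y$ that $\sup_{\xi<1}\E[YU^{-1}(\xi|X|)]=\E[YU^{-1}(|X|)]$; this follows from monotone convergence, since $U^{-1}$ is continuous and increasing and therefore $U^{-1}(\xi|X|)\uparrow U^{-1}(|X|)$ as $\xi\uparrow 1$, with $Y\geq 0$. Taking the supremum over $Y\in\Y$ then gives $\sup_{\xi<1}J(\xi X)=J(X)$.

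With all six axioms verified, $I$ and $J$ are convex modulars and, by the construction recalled above, $L_I$ and $L_J$ are their associated modular spaces. The principal obstacle is precisely this asymmetry in axiom~6: the infimum structure of $I$ precludes a direct minimax interchange and forces reliance on its lower semicontinuity, which is itself the technical heart of Lemma~\ref{Iconvex}, whereas the supremum structure of $J$ is dispatched by commuting suprema and monotone convergence.
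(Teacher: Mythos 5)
Your proof is correct and follows essentially the same route as the paper's: axioms 1--3 directly from the definitions, convexity from Lemma~\ref{Iconvex}, axiom 4 via strict positivity of $V$ together with attainment of the infimum for $I$, and axiom 6 via lower semicontinuity of $I$ in probability (for $I$) and monotone convergence with commuting suprema (for $J$). Your only deviations are refinements -- fixing a strictly positive $Y_0\in\Y^*$ to make the separation argument for $J$ explicit, where the paper is terse, and spelling out why the two descriptions of the modular space coincide -- so nothing substantive differs.
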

\begin{proof}
For $I$ first. Axioms ($1$), ($2$) and ($3$) hold by definition, and ($5$) is proved in Lemma \ref{Iconvex}. For ($4$) notice that $I(\xi Z)=0$ implies $\E[ZV(Y/(\xi Z))]=0$ for some $Y\in\Y$. By positivity, this shows $ZV(Y/(\xi Z))=0$ a.s., from where $Z=0$ a.s. Finally, for axiom ($6$), first recall that $z \mapsto zV(Y/z)$ is increasing, from which $I(Z)\geq \sup_{0\leq \xi < 1} I(\xi Z)=:\zeta$. Now, take $\epsilon_n \nearrow 1$ so $\zeta=\lim I(\epsilon_n Z) $. Because $I$ is l.s.c.\ we deduce that $\lim I(\epsilon_n Z)\geq I(Z)$ and thus $I(Z)=\zeta$.\\
Now for $J$. Axioms ($1$), ($2$) and ($3$) are direct. If $J(\xi X)=0$ this means $YU^{-1}(\xi X)=0$, for all $Y\in\Y$ a.s.  Thus $X=0$ a.s. Lastly, by increasingness of $U^{-1}$ it holds that for fixed $Y$: $YU^{-1}(\xi X)\nearrow YU^{-1}(X)$ as $\xi\nearrow 1$. By monotone convergence then $\E[YU^{-1}(\xi X)]\nearrow \E[YU^{-1}(X)]$ and thus $\sup_{0\leq \xi<1}\E[YU^{-1}(\xi X)]=\E[YU^{-1}(X)]$ and now taking supremum over $Y\in \Y$ we get axiom ($6$).

\end{proof}


Call now $L_I^*$ and $L_J^*$ the topological duals. By the ``reflexivity Theorem'' in \cite{Nakano2} it holds automatically that the modulars $J$ and $I$ are reflexive, in the sense that if the following functionals are defined:
$$I^*(l):=\sup_{Z\in L_I}\{l(Z)-I(Z)\} \mbox{ for }l\in L_I^* \,\,\,\,\,\,\,\,\mbox{ and }\,\,\,\,\,\,\,\, J^*(j):=\sup_{X\in L_J}\{j(X)-J(X)\} \mbox{ for }j\in L_J^*,$$
then $I$ and $J$ may be recovered, that is:
$$I(Z)=\sup_{l\in L_I^*}\{l(Z)-L^*(l)\}\,\,\,\,\,\,\,\, \mbox{ and } \,\,\,\,\,\,\,\,J(X)=\sup_{j\in L_J^*}\{j(X)-J^*(j)\}.$$

 In particular then, both $I$ and $J$ are lower semicontinuous under the strong topologies introduced thus far, and by convexity, also under their weak topologies. What is more, from Lemma \ref{lema}, part 1), we deduce by Theorem 5.43 in \cite{Aliprantis} that both functionals are norm-continuous in the interior of their domains. \\
Another space of interest is the so-called set of finite elements of a modular space $L_F$, denoted $E_F$, which typically has better properties:
$$E_F=\{s\in {\cal S}:F(\alpha s)<\infty \mbox{ for all }\alpha>0\}.$$
We remark that $E_I=L_I=dom(I)$ as soon as condition \eqref{Vdelta} in Assumption \ref{Usuposdelta} holds.
Let us state now a few results that will be repeatedly useful:


\medskip 

\begin{lemma}
\label{lema}
For every $Z\in L_I$, $X\in L_J$:
\begin{enumerate}
\item $I\left(\frac{Z}{\vert Z\vert_I^l}\right)\leq 1$ and $J\left(\frac{X}{\vert X\vert_J^l}\right) \leq 1$. 
\item $Z_n$ norm converges to $Z$ in $L_I$ (respect.\ $X_n$ norm converges to $X$ in $L_J$) if and only if for all $\alpha>0$, $I(\alpha[Z_n-Z])\rightarrow 0$ (respect.\ $J(\alpha[X_n-X])\rightarrow 0$).
\item $I(Z)+J(X)\geq \E[XZ]$.
\end{enumerate}

\end{lemma}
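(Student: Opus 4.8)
The plan is to handle the three items in order, using only the modular axioms for $I$ and $J$ recorded in Proposition \ref{modfunspace} together with the a.s.\ conjugacy of $\eta_Y^*$ and $\eta_Y$ from Proposition \ref{quienes}. For item 1, I would first note the elementary fact, valid for any convex modular $F$ with $F(0)=0$, that $\xi\mapsto F(\xi s)$ is non-decreasing on $[0,\infty)$: for $0<\xi_1<\xi_2$ one writes $\xi_1 s=\tfrac{\xi_1}{\xi_2}(\xi_2 s)+(1-\tfrac{\xi_1}{\xi_2})\,0$ and applies convexity with $F(0)=0$. Applied to $F=I$ this makes $\beta\mapsto I(Z/\beta)$ non-increasing, so $\{\beta>0:I(Z/\beta)\le 1\}$ is the interval $(\vert Z\vert_I^l,\infty)$ (possibly including its left endpoint); in particular $I(Z/\beta)\le 1$ for every $\beta>\vert Z\vert_I^l$. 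Writing $\beta^\ast:=\vert Z\vert_I^l$ (which is nonzero for $Z\neq0$) and using that for each $\xi\in(0,1)$ one has $\xi Z/\beta^\ast=Z/(\beta^\ast/\xi)$ with $\beta^\ast/\xi>\beta^\ast$, axiom (6) gives $I(Z/\beta^\ast)=\sup_{0\le\xi<1}I(\xi Z/\beta^\ast)\le 1$. The identical argument with $J$ in place of $I$ (axiom (6) again holding by Proposition \ref{modfunspace}) yields $J(X/\vert X\vert_J^l)\le 1$.

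For item 2, I would derive both implications from item 1 and convexity. For the forward direction set $W_n:=Z_n-Z$, fix $\alpha>0$, and assume $\vert W_n\vert_I^l\to0$; for $n$ large $c_n:=\alpha\vert W_n\vert_I^l<1$, and convexity with $I(0)=0$ together with item 1 give
\[
I(\alpha W_n)=I\!\left(c_n\,\tfrac{W_n}{\vert W_n\vert_I^l}\right)\le c_n\,I\!\left(\tfrac{W_n}{\vert W_n\vert_I^l}\right)\le c_n\to0,
\]
the case $W_n=0$ being trivial. Conversely, if $I(\alpha W_n)\to0$ for every $\alpha>0$, then fixing $\varepsilon>0$ and taking $\alpha=1/\varepsilon$ gives $I(W_n/\varepsilon)\le1$ for $n$ large, hence $\vert W_n\vert_I^l\le\varepsilon$ by the very definition of the Luxemburg norm; as $\varepsilon$ is arbitrary, $\vert W_n\vert_I^l\to0$. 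The statements for $J$ follow verbatim.

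For item 3, the engine is the a.s.\ Fenchel--Young inequality. By Proposition \ref{quienes} the functions $\eta_Y^*(\cdot,\omega)$ and $\eta_Y(\cdot,\omega)$ are a.s.\ convex conjugates, so pointwise $\eta_Y^*(Z)+\eta_Y(X)\ge ZX$ for every $Y\in\Y$. First I would check that $\E[ZX]$ is well defined and finite: by Lemma \ref{Iconvex} (appealing to Lemma \ref{suposdensonec} to stay within $\Y^*$) one has $Z\in L^*_{Y_0}$ for some $Y_0\in\Y^*$, while $X\in L_J\subseteq\bigcap_{Y}L_Y$ gives $X\in L_{Y_0}$, so the Hölder inequality following Theorem \ref{variasobsorlicz} yields $\E\vert ZX\vert<\infty$. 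Integrating the Young inequality (the two summands on the left being non-negative and $ZX$ integrable) gives, for each $Y\in\Y$,
\[
\E[\eta_Y^*(Z)]\ge \E[ZX]-\E[\eta_Y(X)]\ge \E[ZX]-J(X),
\]
where the second inequality uses $\E[\eta_Y(X)]\le\sup_{Y'\in\Y}\E[\eta_{Y'}(X)]=J(X)$. Taking the infimum over $Y\in\Y$ on the left turns the left-hand side into $I(Z)$ and delivers $I(Z)+J(X)\ge\E[ZX]$.

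The only genuinely delicate point is the inf/sup bookkeeping in item 3: the inequality must survive taking an infimum over $Y$ on the $I$-side and a supremum over $Y$ on the $J$-side at once. This works precisely because the Young bound holds for \emph{every} fixed $Y$ and the term $\E[\eta_Y(X)]$ is uniformly dominated by $J(X)$, so the resulting lower bound $\E[ZX]-J(X)$ is independent of $Y$ and passes to the infimum. Everything else is routine modular-space manipulation.
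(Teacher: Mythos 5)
Your proof is correct, and for items 1 and 2 it takes a genuinely different, more self-contained route than the paper. For item 1 the paper argues separately for the two modulars: for $I$ it takes $\beta_n\searrow \vert Z\vert_I^l$ with $I(Z/\beta_n)\leq 1$ and invokes the lower semicontinuity of $I$ with respect to convergence in probability (Lemma \ref{Iconvex}), while for $J$ it compares $\vert X\vert_J^l$ with the per-$Y$ Luxemburg norms $\|X\|^l_{\eta_Y}$ and uses Fatou's Lemma. You instead extract the statement as a purely abstract consequence of the modular axioms certified in Proposition \ref{modfunspace}: monotonicity of $\xi\mapsto F(\xi s)$ from convexity and $F(0)=0$, plus axiom (6) (left-continuity along rays). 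This buys uniformity --- one argument serves $I$, $J$, and indeed any convex modular --- though it is not really cheaper, since axiom (6) for $I$ was itself proved in Proposition \ref{modfunspace} via the same lower semicontinuity the paper uses directly; there is no circularity, as Proposition \ref{modfunspace} nowhere uses the present lemma. For item 2 the paper simply cites Theorem 3 in Chapter XI, 81 of \cite{Nakano}; your two-line convexity argument (scaling by $c_n=\alpha\vert W_n\vert_I^l<1$ in one direction, testing $\alpha=1/\varepsilon$ against the definition of the Luxemburg norm in the other) is an elementary replacement for that citation and is the standard proof. For item 3 your route coincides with the paper's (pointwise Fenchel--Young, the uniform bound $\E[\eta_Y(X)]\leq J(X)$, then infimum over $Y$), with one small slip: Proposition \ref{quienes} gives the a.s.\ conjugacy of $\eta_Y^*$ and $\eta_Y$ only for \emph{strictly positive} $Y$, so the Young inequality should be stated for $Y\in\Y^*$ (as the paper does), after which $\inf_{Y\in\Y^*}\E[\eta^*_Y(Z)]$ is identified with $I(Z)$ by Lemma \ref{suposdensonec} --- a tool you already invoke for the H\"older step, so the repair is immediate. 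Your extra verification that $\E[\vert ZX\vert]<\infty$ via H\"older is not strictly needed (when $I(Z)$ or $J(X)$ is infinite the claim is trivial, and otherwise the positive part of $XZ$ is dominated by $\eta^*_Y(Z)+\eta_Y(X)$, which makes $\E[XZ]$ well defined in $[-\infty,\infty)$), but it does no harm.
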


\begin{proof}  We prove (1) first. Notice $J\left(\frac{X}{\vert X\vert_J^l}\right)\leq\sup_Y\E[YU^{-1}(X/\|X\|^l_{\eta_Y})]\leq 1$, the first inequality because clearly $\vert X\vert_J^l \geq \|X\|^l_{\eta_Y}$ and the second by definition of the Luxemburg norm and Fatou's Lemma. On the other hand take $\beta_n\searrow\vert Z\vert_I^l$ such that $I(Z/\beta_n)\leq 1$. Since $Z/\beta_n\rightarrow Z/\vert Z\vert_I^l$ in probability we conclude by Lemma \ref{Iconvex} that $$I\left(Z/\vert Z\vert_I^l\right)\leq \liminf I\left(Z/\beta_n\right)\leq 1.$$
Part (2) is a direct consequence of Theorem 3 in Chapter XI,81 of \cite{Nakano}.
For part (3), by Remark \ref{dualKoz} the conjugate of $\eta_Y$ is $\eta_Y^*$, and so $\E[XZ]\leq \E[ZV(Y/Z)]+\E[YU^{-1}(X)]$ for every $Y\in\Y^*$. Thus bounding $\E[YU^{-1}(X)]$ above by $J(X)$ and then taking infimum over $Y\in\Y$ yields $\E[XZ]\leq I(Z)+J(X)$.

\end{proof}


Time is ripe to prove some more refined properties of the spaces $L_I$ and $L_J$. Fortunately Lemma \ref{suposdensonec} says that the properties of both $L_Y$ and $L^*_Y$, with $Y\in\Y^*$, can be lifted.

\medskip 

\begin{proposition}
\label{lengthy}
Both subspaces $E_I$ and $E_J$ are closed subspaces of $L_I$ and $L_J$ respectively.
When considering the almost-sure ordering, $E_I$ and $L_J$ are Banach lattices, and furthermore $E_I$ is order-continuous.

\end{proposition}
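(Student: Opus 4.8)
The plan is to treat the assertions in the order stated, relying on the modular characterization of norm convergence in Lemma \ref{lema}(2) and on the convexity, monotonicity and attainment properties already recorded for $I$ and $J$.

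\emph{Closedness of $E_I$ and $E_J$.} First I would show $E_I$ is norm-closed in $L_I$, the argument for $E_J\subset L_J$ being identical with $J$ in place of $I$. Take $Z_n \in E_I$ with $\vert Z_n - Z\vert_I^a \to 0$. Fixing $\alpha > 0$, Lemma \ref{lema}(2) gives $I(2\alpha(Z-Z_n)) \to 0$, so this quantity is finite for large $n$; convexity of $I$ (Lemma \ref{Iconvex}) then yields
$$I(\alpha Z) \le \tfrac12 I(2\alpha Z_n) + \tfrac12 I(2\alpha(Z - Z_n)) < \infty,$$
since $Z_n \in E_I$ forces $I(2\alpha Z_n) < \infty$. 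As $\alpha > 0$ was arbitrary, $Z \in E_I$. Being a norm-closed subspace of the Banach space $L_I$, the space $E_I$ is itself complete.

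\emph{Lattice structure of $E_I$ and $L_J$.} Because $\eta_Y^*(\cdot,\omega)$ is even and nondecreasing in $\vert z\vert$ (Lemma \ref{lemagama}, Definition \ref{defietas}), the modular $I$ is monotone: $\vert Z_1\vert \le \vert Z_2\vert$ a.s.\ implies $\E[\eta_Y^*(Z_1)]\le \E[\eta_Y^*(Z_2)]$ for each $Y$ and hence $I(Z_1)\le I(Z_2)$ after taking the infimum over $\Y$; the same holds for $J$ by increasingness of $U^{-1}$. Monotonicity of $I$ makes the Luxemburg norm solid, $\vert Z_1\vert \le \vert Z_2\vert \Rightarrow \vert Z_1\vert_I^l \le \vert Z_2\vert_I^l$, and shows $E_I$ is a solid subspace of $L^0$: if $\vert Z_1 \vert \le \vert Z_2\vert$ with $Z_2 \in E_I$, then $I(\alpha Z_1)\le I(\alpha Z_2)<\infty$ for all $\alpha$, so $Z_1 \in E_I$. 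Combining solidity with $\vert Z_1 \vee Z_2\vert \le \vert Z_1\vert + \vert Z_2\vert \in E_I$ (linearity of $E_I$ and evenness of $I$) gives that $E_I$ is a sublattice of $L^0$. Thus $E_I$ is a complete, solid sublattice with monotone norm, i.e.\ a Banach lattice; the identical reasoning applied to $J$ and the (complete) space $L_J$ shows $L_J$ is a Banach lattice.

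\emph{Order continuity of $E_I$.} It suffices to verify that $0 \le Z_n \downarrow 0$ a.s.\ with $Z_n \in E_I$ implies $\vert Z_n\vert_I^a \to 0$; on the probability space this sequential ($\sigma$-)order continuity coincides with order continuity of the norm (cf.\ \cite{Aliprantis}). By Lemma \ref{lema}(2) it is enough to prove $I(\alpha Z_n)\to 0$ for every fixed $\alpha>0$. If $Z_1=0$ this is trivial; otherwise $\alpha Z_1$ is a non-vanishing element of $dom(I)$ (as $Z_1\in E_I$), so by Lemma \ref{Iconvex} the infimum $I(\alpha Z_1) = \inf_{Y\in\Y}\E[\eta_Y^*(\alpha Z_1)]$ is attained at some $\bar Y \in \Y$, whence $\E[\eta_{\bar Y}^*(\alpha Z_1)] < \infty$. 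Now $0 \le \alpha Z_n \le \alpha Z_1$ together with monotonicity in $\vert z\vert$ gives the integrable domination $0 \le \eta_{\bar Y}^*(\alpha Z_n) \le \eta_{\bar Y}^*(\alpha Z_1)$, while $\alpha Z_n \downarrow 0$ a.s.\ and $\lim_{z\to 0}\eta_{\bar Y}^*(z,\omega)=\eta_{\bar Y}^*(0,\omega)=0$ a.s.\ yield $\eta_{\bar Y}^*(\alpha Z_n)\to 0$ a.s.; here finiteness of $\E[\eta_{\bar Y}^*(\alpha Z_1)]$ forces $\bar Y>0$ a.s.\ on $\{Z_1>0\}$ when $V(0)=\infty$, whereas $\eta_{\bar Y}^*(z,\omega)\le \vert z\vert V(0)$ handles the case $V(0)<\infty$. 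Dominated convergence then gives $I(\alpha Z_n)\le \E[\eta_{\bar Y}^*(\alpha Z_n)]\to 0$, as required.

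The main obstacle is this last step: order continuity genuinely fails for the full space $L_I$ (just as $L^\infty$ is not order continuous), so the argument must use the finite-element condition defining $E_I$ together with the feature—special to the partial infimum $I$—that the infimum over $\Y$ is \emph{attained}, which is exactly what produces a single integrable dominating function $\eta_{\bar Y}^*(\alpha Z_1)$. Verifying the a.s.\ limit $\eta_{\bar Y}^*(\alpha Z_n)\to 0$ also requires the case distinction according to the finiteness of $V(0)$, which is where the structure of the conjugate pair $(\eta_Y^*,\eta_Y)$ enters.
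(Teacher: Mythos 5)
Your closedness and order-continuity arguments are correct and essentially the paper's own: for closedness the paper uses the same convex splitting $k s = \tfrac12\bigl(2k(s-s_n)\bigr)+\tfrac12\bigl(2k s_n\bigr)$, only certifying finiteness of $F\bigl(2k(s-s_n)\bigr)$ through Lemma \ref{lema}(1) and the Luxemburg norm where you invoke Lemma \ref{lema}(2); for order continuity the paper works directly with nets and, instead of the attained minimizer from Lemma \ref{Iconvex}, simply picks some $Y$ with $\E\bigl[Z_{\alpha_0}V\bigl(Y/(\beta Z_{\alpha_0})\bigr)\bigr]<\infty$ (finiteness of the infimum suffices; attainment is a harmless strengthening). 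Your reduction from nets to sequences is legitimate because these spaces are ideals of $L^0$ over a probability space, and your case distinction on the finiteness of $V(0)$ is a point the paper glosses over with ``dominated (or monotone) convergence''.

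The genuine gap is completeness, which is the substantive content of the Banach-lattice claim. For $L_J$ you write ``the (complete) space $L_J$'' with no argument: nothing established before Proposition \ref{lengthy} gives this --- Proposition \ref{modfunspace} only makes $L_J$ a normed modular space --- and it is exactly one of the two completeness statements this proposition exists to prove. The paper devotes the longest part of its proof to it: a Cauchy sequence $(X_n)$ in $L_J$ is Cauchy in every Orlicz--Musielak space $L_{\eta_Y}$, $Y\in\Y^*$ (complete by Theorem \ref{variasobsorlicz}); the limits coincide a.s.\ since each such convergence dominates convergence in probability; $J$, as a supremum of Fatou-l.s.c.\ functionals, is l.s.c.\ on $(L^0)_+$, so the limit lies in $L_J$; and finally the estimate $\| X_n-X\|_Y \leq \vert X_n-X_m\vert_J^l + \| X_m-X\|_Y$, with $N$ chosen uniformly in $Y$ and a last application of Fatou, upgrades the convergence to the $\vert\cdot\vert_J^l$-norm. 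Your ``identical reasoning'' covers only the easy solidity and lattice-norm checks, not this. The same objection applies, less severely, to $E_I$: you appeal to ``the Banach space $L_I$'', but the completeness of $L_I$ has not been proved at this stage either (the informal promise in Section \ref{mod intro} is meant to be redeemed here); the paper instead proves completeness of $E_I$ directly, via the criterion that a normed Riesz space is a Banach lattice iff every positive increasing Cauchy sequence converges (Theorem 9.3 in \cite{Aliprantis}), combining the $L^1$-bound from Lemma \ref{lema}(3) with $X=1$, Fatou's lemma, and the lower semicontinuity of $I$ under convergence in probability from Lemma \ref{Iconvex}. Both appeals are repairable --- one can prove $L_I$ and $L_J$ complete by the standard scheme: embed into $L^1$ via Lemma \ref{lema}(3) (using $1\in L_J$, resp.\ $1\in E_I$), pass to a limit in probability along a Cauchy sequence, and close with the lower semicontinuity of the modular --- but as written, the completeness half of the proposition is assumed rather than proved.
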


In the last result, any of the previously defined norms may have been used. See the Appentix for the lengthy proof.

In order to further understand the modular spaces introduced thus far, and in doing so paving the way for the central statements of this section, some duality results will be pursued. First of all, H\"older-type inequalities are proved:

\medskip

\begin{proposition}
\label{Hoelder}
We have:
$$\vert\E[ XZ]\vert\leq \vert Z \vert_I^i \vert X \vert_J^j \leq 2 \vert Z \vert_I^k \vert X \vert_J^k, $$
where $i,j,k\in\{a,l\}$ and $i\neq j$.
Furthermore, the inclusions $L^{\infty}\rightarrow L_J\rightarrow L^1$ and $L^{\infty}\rightarrow L_I\rightarrow L^1$ are continuous.

\end{proposition}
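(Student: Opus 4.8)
The plan is to obtain both Hölder inequalities from the Fenchel--Young bound of Lemma \ref{lema}(3) together with the normalization of Lemma \ref{lema}(1), along the classical Amemiya--Luxemburg lines. First I would note that $\eta_Y^*$ and $\eta_Y$ depend on their argument only through its modulus, so $I(Z)=I(|Z|)$ and $J(X)=J(|X|)$; applying Lemma \ref{lema}(3) to the nonnegative pair $(|Z|,|X|)$ then upgrades it to the absolute form $\E[|XZ|]\le I(Z)+J(X)$, which is the single analytic ingredient I need.

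To prove $|\E[XZ]|\le|Z|_I^a|X|_J^l$ I would set $\hat X:=X/|X|_J^l$ (the case $X=0$ being trivial), so $J(\hat X)\le1$ by Lemma \ref{lema}(1). For every $k>0$ the upgraded bound applied to $(kZ,\hat X)$ gives $\E[|Z\hat X|]\le\frac1k\bigl(I(kZ)+1\bigr)$; taking the infimum over $k$ produces exactly $\E[|Z\hat X|]\le|Z|_I^a$, and multiplying by $|X|_J^l$ together with $|\E[XZ]|\le\E[|XZ|]$ closes the case. Normalizing $Z$ instead gives the symmetric estimate $|\E[XZ]|\le|Z|_I^l|X|_J^a$, so both admissible pairs $i\neq j$ are covered. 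The remaining inequality $|Z|_I^i|X|_J^j\le2|Z|_I^k|X|_J^k$ is then purely formal: it follows from the norm equivalence $|s|^l\le|s|^a\le2|s|^l$ recorded in Subsection \ref{topomod}, checked in the four cases determined by $k\in\{a,l\}$ and $\{i,j\}=\{a,l\}$.

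For the continuous inclusions the crucial preliminary is that the constant $1$ lies in both $L_I$ and $L_J$. Indeed $J(\alpha)=U^{-1}(\alpha)\sup_{Y\in\Y}\E[Y]\le U^{-1}(\alpha)<\infty$ for $\alpha$ small, since $\E[Y]\le1$ for $Y\in\Y$; and $1\in L_I$ follows by choosing any $Y\in\Y^*$ (nonempty by Assumption \ref{assgnral}) and using $\E[\alpha V(Y/\alpha)]<\infty$. Continuity of $L_J\to L^1$ is then a consequence of the first Hölder inequality with $Z=\mathrm{sgn}(X)$: monotonicity of $\eta_Y^*$ in $|z|$ (Lemma \ref{lemagama}) gives $|\mathrm{sgn}(X)|_I^a\le|1|_I^a$, whence $\E[|X|]=\E[X\,\mathrm{sgn}(X)]\le|1|_I^a\,|X|_J^l$; the symmetric argument with $X=\mathrm{sgn}(Z)$ and $|\mathrm{sgn}(Z)|_J^a\le|1|_J^a$ yields $L_I\to L^1$.

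For $L^\infty\to L_J$ and $L^\infty\to L_I$ I would use the same monotonicity and the scaling of the Luxemburg norm. Writing $M=\|X\|_\infty$, one has $J(X/\beta)\le U^{-1}(M/\beta)\le1$ as soon as $\beta\ge M/U(1)$ (here $U(1)>0$ because $U(0+)=0$), so $|X|_J^l\le\|X\|_\infty/U(1)$; likewise $I(Z/\beta)\le I(M/\beta)$ combined with $\lim_{\alpha\to0}I(\alpha)=0$ (valid since $1\in L_I$) furnishes $\alpha_0$ with $I(\alpha_0)\le1$ and hence $|Z|_I^l\le\|Z\|_\infty/\alpha_0$. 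The argument is in the end mostly bookkeeping; the only steps needing genuine care are the passage to the absolute-value form of Lemma \ref{lema}(3) and the verification that constants belong to $L_I$ and $L_J$, on which all four inclusions rest.
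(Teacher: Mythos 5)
Your proposal is correct and follows essentially the same route as the paper's proof: the Fenchel--Young bound of Lemma \ref{lema}(3), normalized via Lemma \ref{lema}(1) and optimized over scalings to produce the Amemiya norm $\inf_{k>0}\frac{1}{k}\left(1+I(kZ)\right)$, gives the mixed H\"older inequalities, the factor $2$ comes from the modular-space equivalence $\vert\cdot\vert^l\leq\vert\cdot\vert^a\leq 2\vert\cdot\vert^l$, and the four inclusions rest, exactly as in the paper, on $1\in L_I\cap L_J$ together with the monotonicity of $I$ and $J$. Your additional bookkeeping (the absolute-value upgrade of Lemma \ref{lema}(3) and the explicit constants $U(1)$ and $\alpha_0$ in the $L^\infty$ embeddings) merely spells out steps the paper leaves implicit.
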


\begin{proof} From inequality (3) in Lemma \ref{lema} follows that $\E[XZ]\leq \frac{1}{\alpha\beta}\{I(\alpha Z)+J(\beta X)\}$. Now, take $\beta$ such that $J(\beta X)\leq 1$. Then $\E[XZ]\leq \frac{1}{\beta}\left[\frac{1}{\alpha}\{1+I(\alpha Z)\}\right]$ and taking infimum over $\alpha >0$ yields $\E[XZ]\leq \frac{1}{\beta} \vert Z \vert_I^a$. Now taking infimum of the $1/\beta$ such that $J(\beta X)\leq 1$ gives $\E[XZ]\leq \vert X \vert_J^l \vert Z \vert_I^a$. From here also $\vert \E[XZ] \vert \leq \vert X \vert_J^l \vert Z \vert_I^a$ and by a similar argument $\vert \E[XZ] \vert \leq \vert X \vert_J^a \vert Z \vert_I^l$. Finally, because in the general context of modular spaces (see \cite{Nakano}, Chapter XI) holds that $ \vert\cdot\vert^l \leq \vert\cdot\vert^a \leq 2\vert\cdot\vert^l$ we get the desired inequalities. \\
Evidently $1\in L_J$ and by Assumption \ref{assgnral} also $1\in L_I$. By using the derived H\"older inequalities, this shows the continuity of the inclusions into $L^1$. On the other hand, because both $I$ and $J$ are increasing, $\vert \cdot \vert_I\leq \vert\cdot \vert_{\infty}\vert 1\vert_I$ and likewise for $J$, thus proving the continuity of the inclusions from $L^{\infty}$.
%

\end{proof}


Notice from this that, as it can be expected, for every $X\in L_J$ the functional $l_X(\cdot)=\E[\cdot X]$ belongs to $L_I^*$ and for every $Z\in L_I$ the functional $l_Z(\cdot)=\E[\cdot Z]$ belongs to $L_J^*$. 
We state now a Riesz-type representation result. This will rest in a few technical points to be established in Lemma \ref{lemR}. Both proofs are given in the Appendix.


\begin{proposition}
\label{rep}
The topological dual of $E_I$ is $L_J$, with the usual identification:

$$l \in (E_I)^* \leftrightarrow l(Z)=\E[ZX] \mbox{ for some }X\in L_J,$$

and this identification is isomorphic isometric between $(E_I,\vert\cdot\vert_I^a)$ and $(L_J,\vert\cdot\vert_J^l)$.\\
Furthermore, for every $Z\in L_I,X\in L_J$, we have $I^*(l_X)=J(X)$, and if $E_I=L_I$ also $J^*(l_Z)=I(Z)$.

\end{proposition}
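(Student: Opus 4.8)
The plan is to prove the three assertions in turn: the Riesz-type representation $(E_I)^*\cong L_J$, the isometry between $(E_I,\vert\cdot\vert_I^a)$ and $(L_J,\vert\cdot\vert_J^l)$, and the conjugacy identities $I^*(l_X)=J(X)$ and $J^*(l_Z)=I(Z)$. The contractive direction $X\mapsto l_X$, $l_X(\cdot)=\E[\,\cdot\,X]$, together with the bound $\|l_X\|_{(E_I)^*}\le\vert X\vert_J^l$, is free from the H\"older inequality of Proposition \ref{Hoelder}; all the work lies in surjectivity, the reverse norm estimate, and the two duality formulas. Throughout I would use that $\Y^*\neq\emptyset$ (Assumption \ref{assgnral}.2), that bounded functions belong to $E_{\eta^*_Y}$ and to $E_I$ (for $Y\in\Y^*$, $\E[\eta^*_Y(\alpha Z)]\le\alpha\|Z\|_\infty\E[V(Y/(\alpha\|Z\|_\infty))]<\infty$), and that $L^\infty$ is norm-dense in the order-continuous space $E_I$ (Proposition \ref{lengthy}).

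The decisive step is to produce, from $l\in(E_I)^*$, a representing $X\in L_J$ with $\vert X\vert_J^l\le\|l\|$. For each $Y\in\Y^*$ the inclusion $E_{\eta^*_Y}\hookrightarrow E_I$ is norm-nonincreasing, since $I(\cdot)\le\E[\eta^*_Y(\cdot)]$ gives $\vert Z\vert_I^a\le\|Z\|^a_{\eta^*_Y}$ through the Amemiya formula \eqref{normorlicz}; hence $l$ restricted to the Orlicz heart $E_{\eta^*_Y}$ of $L^*_Y$ is continuous of norm $\le\|l\|$, and by Theorem \ref{variasobsorlicz} it is represented by some $X_Y\in L_Y$ with $\|X_Y\|^l_{\eta_Y}\le\|l\|$. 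Since all these functionals agree with $l$ on the common dense subspace $L^\infty$, uniqueness of densities forces $X_Y=X$ a.s.\ for a single $X$, so that $\sup_{Y\in\Y^*}\|X\|^l_{\eta_Y}\le\|l\|$. By the Luxemburg-norm property (cf.\ Lemma \ref{lema}(1)) and monotonicity of $\eta_Y$, $\E[\eta_Y(X/\|l\|)]\le\E[\eta_Y(X/\|X\|^l_{\eta_Y})]\le1$ for every $Y$, whence $J(X/\|l\|)=\sup_{Y\in\Y^*}\E[\eta_Y(X/\|l\|)]\le1$ and thus $X\in L_J$ with $\vert X\vert_J^l\le\|l\|$. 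Now $l=l_X$ on $L^\infty$, both sides are norm-continuous on $E_I$ (H\"older for $l_X$), and $L^\infty$ is dense, so $l=l_X$ on $E_I$; combined with $\|l_X\|\le\vert X\vert_J^l$ this yields surjectivity and the isometry $\|l_X\|=\vert X\vert_J^l$. The measurability and gluing technicalities here are exactly what I would collect in Lemma \ref{lemR}.

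For $I^*(l_X)=J(X)$ I would compute $I^*(l_X)=\sup_{Z\in L_I}\{\E[ZX]-I(Z)\}$ and insert $I(Z)=\inf_{Y\in\Y^*}\E[\eta^*_Y(Z)]$ from Lemma \ref{suposdensonec}(i), turning the expression into $\sup_{Z\in L_I}\sup_{Y\in\Y^*}\{\E[ZX]-\E[\eta^*_Y(Z)]\}$. Interchanging the two suprema (always legitimate, as each equals the supremum over the product), for fixed $Y\in\Y^*$ the inner supremum over $\{Z\in L_I:\E[\eta^*_Y(Z)]<\infty\}\subseteq L^*_Y$ is, by Remark \ref{dualKoz}, the Orlicz--Musielak conjugate value $\E[\eta_Y(X)]$ (this uses $X\in L_J\subseteq\bigcap_Y L_Y$ from Lemma \ref{Iconvex}). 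Taking the supremum over $Y\in\Y^*$ and invoking Lemma \ref{suposdensonec}(i) again returns $\sup_{Y\in\Y^*}\E[\eta_Y(X)]=J(X)$. For the last identity, under $E_I=L_I$ one has $(L_I)^*=(E_I)^*$, so every $l\in L_I^*$ is some $l_X$ with $X\in L_J$; rewriting the ``reflexivity theorem'' biconjugation $I(Z)=\sup_{l\in L_I^*}\{l(Z)-I^*(l)\}$ via $l=l_X$ and $I^*(l_X)=J(X)$ gives $I(Z)=\sup_{X\in L_J}\{\E[ZX]-J(X)\}=J^*(l_Z)$.

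I expect the main obstacle to be the second paragraph: upgrading the representing density from the Orlicz-heart duality to a genuine element of $L_J$ with the \emph{sharp} Luxemburg bound, and doing so uniformly across the whole family $\{L^*_Y\}_{Y\in\Y^*}$. The delicate points are the norm-nonincreasing nature of the inclusions $E_{\eta^*_Y}\hookrightarrow E_I$, the consistency of the densities $X_Y$ forcing a single $X$, and the order-continuity argument that bounded functions are dense so that the representation propagates to all of $E_I$ — precisely the non-reflexive, modular features that distinguish $L_I$ from a single Orlicz--Musielak space, and which I would isolate in Lemma \ref{lemR}.
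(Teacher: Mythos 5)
Your proposal is correct, and the second half (the identities $I^*(l_X)=J(X)$ and $J^*(l_Z)=I(Z)$) is essentially identical to the paper's argument: interchange the suprema over $Z$ and $Y\in\Y^*$, use the conjugacy of $\E[\eta^*_Y(\cdot)]$ and $\E[\eta_Y(\cdot)]$ from Remark \ref{dualKoz} for each fixed $Y$ (with Lemma \ref{suposdensonec}(i) to pass between $\Y$ and $\Y^*$), and then deduce $J^*(l_Z)=I(Z)$ from Nakano's biconjugation once $(L_I)^*=(E_I)^*\cong L_J$. The representation half, however, takes a genuinely different route. The paper proceeds globally: it sets $\mu(A):=l(\ind{A})$, proves $\mu$ is a countably additive signed measure absolutely continuous w.r.t.\ $\Prob$ (the countable additivity resting on Lemma \ref{lemR}(3), i.e.\ norm convergence of uniformly bounded a.s.-convergent sequences), extracts $g=\frac{d\mu}{d\Prob}$ by Radon--Nikodym, and then invokes Lemma \ref{lemR}(4) --- a modular analogue of the classical ``supremum over simple functions equals the Luxemburg norm'' formula --- to conclude $g\in L_J$ with the exact isometry $\vert g\vert_J^l=\Vert l\Vert$. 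You instead localize: restrict $l$ to each Orlicz heart $E_{\eta^*_Y}$ (correctly noting the norm-nonincreasing inclusion $\Vert Z\Vert^a_{\eta^*_Y}\geq \vert Z\vert_I^a$ via \eqref{normorlicz}), apply the isometric heart duality of Theorem \ref{variasobsorlicz} to get representatives $X_Y\in L_{\eta_Y}$ with $\Vert X_Y\Vert^l_{\eta_Y}\leq\Vert l\Vert$, glue them into a single $X$ by $L^1$-uniqueness tested on $L^{\infty}$, and obtain $\vert X\vert_J^l\leq\Vert l\Vert$ from the uniform bound $\sup_{Y\in\Y^*}\Vert X\Vert^l_{\eta_Y}\leq\Vert l\Vert$ together with Fatou and the definition of $J$. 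What each approach buys: yours outsources the Radon--Nikodym step and the simple-function norm computation to the already-cited Orlicz--Musielak theory, at the cost of the gluing/consistency step and the $\sup_Y$ bound; the paper's is self-contained at the modular level, and its auxiliary Lemma \ref{lemR} (points (1)--(3)) is reused elsewhere, e.g.\ in the proof of Theorem \ref{minimaxsinreflexividad}. The two are in fact related under the hood, since the paper's own proof of Lemma \ref{lemR}(4) also passes through the family of Luxemburg norms $\{\Vert\cdot\Vert^l_{\eta_Y}\}_{Y\in\Y^*}$ and a per-space Orlicz result of Rao--Ren; your proof simply makes that factorization explicit from the start. Only cosmetic cautions: handle the degenerate case $\Vert l\Vert=0$ separately when dividing by $\Vert l\Vert$, and note that your appeal to ``Lemma \ref{lema}(1)'' for $\E[\eta_Y(X/\Vert X\Vert^l_{\eta_Y})]\leq 1$ is really the standard per-$Y$ Fatou argument rather than the modular statement itself --- neither affects correctness.
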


\medskip

\begin{lemma}
\label{lemR}
\quad
\begin{enumerate}

\item $\ind{A}\in E_I$ for every $A\in\F$

\item Simple functions are norm dense in $E_I$

\item If $Z_n\rightarrow 0$ a.s. and $\vert Z_n\vert$ is bounded by a constant, then $\vert Z_n\vert_I\rightarrow 0$

\item If $\kappa:=\sup\{\vert\E[fg] \vert :f\mbox{ simple and }\vert f \vert_I^a\leq 1 \}<\infty$ then $g\in L_J$ and $\vert g \vert_J^l = \kappa$

\end{enumerate}

\end{lemma}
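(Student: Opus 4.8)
The strategy is to settle (1) and (3) by a single domination argument, to deduce (2) from (3) by truncation, and to reserve the real work for (4). Throughout fix $Y\in\Y^*$ (possible by point 2 of Assumption \ref{assgnral}); by Proposition \ref{quienes}(a) the map $z\mapsto\eta_Y^*(z,\omega)=|z|V(Y/|z|)$ is a.s.\ a rho-functional, hence a.s.\ nondecreasing in $|z|$, continuous with $\eta_Y^*(0,\cdot)=0$, and $\E[\eta_Y^*(c,\cdot)]=c\,\E[V(Y/c)]<\infty$ for every constant $c>0$ by the definition of $\Y^*$. For (1), monotonicity gives $I(\alpha\ind A)\le\E[\eta_Y^*(\alpha\ind A)]=\alpha\,\E[\ind A\,V(Y/\alpha)]\le\alpha\,\E[V(Y/\alpha)]<\infty$ for \emph{every} $\alpha>0$, so $\ind A\in E_I$. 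For (3), fix $\alpha>0$: since $|Z_n|\le c$ we have $0\le\eta_Y^*(\alpha Z_n)\le\eta_Y^*(\alpha c)\in L^1$, while $\eta_Y^*(\alpha Z_n)\to0$ a.s.\ by continuity at the origin; dominated convergence yields $I(\alpha Z_n)\le\E[\eta_Y^*(\alpha Z_n)]\to0$, and since this holds for all $\alpha$, part (2) of Lemma \ref{lema} gives $\vert Z_n\vert_I\to0$.

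For (2) I approximate in two stages. Writing $Z=Z_+-Z_-$ (both in $E_I$, since $I$ is monotone in $|\cdot|$), I may take $Z\ge0$ and set $W_n:=Z\wedge n$. For fixed $\alpha>0$, finiteness of $I(\alpha Z)$ together with Lemma \ref{Iconvex} furnishes a minimizer $Y_\alpha\in\Y$ with $\E[\eta_{Y_\alpha}^*(\alpha Z)]=I(\alpha Z)<\infty$; since $0\le\alpha(Z-W_n)=\alpha(Z-n)_+\to0$ a.s.\ and $\eta_{Y_\alpha}^*(\alpha(Z-W_n))\le\eta_{Y_\alpha}^*(\alpha Z)\in L^1$, dominated convergence gives $I(\alpha(Z-W_n))\to0$, whence $\vert Z-W_n\vert_I\to0$ by Lemma \ref{lema}(2). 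Each bounded $W_n$ is then approximated by simple functions $0\le s_m\le W_n\le n$ with $s_m\to W_n$ a.s., and part (3) gives $\vert W_n-s_m\vert_I\to0$. A diagonal choice yields simple functions converging to $Z$ in norm; each lies in $E_I$ by (1) and linearity.

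Part (4) is the main obstacle: it is the associate-norm identity underpinning Proposition \ref{rep}, and it must be proved \emph{without} that proposition and using only simple test functions lying in the Amemiya unit ball. Assume $\kappa>0$ (otherwise $g=0$). Testing against the simple functions $\mathrm{sign}(g)\,\ind{|g|\le n}$, whose Amemiya norms are $\le\vert 1\vert_I^a<\infty$ by monotonicity of $I$ and (1), yields $\E[|g|\,\ind{|g|\le n}]\le\kappa\,\vert 1\vert_I^a$, so $g\in L^1$ by monotone convergence. The plan is then to prove $\E[\eta_Y(g/\kappa)]\le1$ for each fixed $Y\in\Y^*$; then Lemma \ref{suposdensonec}(i) gives $J(g/\kappa)=\sup_{Y\in\Y^*}\E[\eta_Y(g/\kappa)]\le1$, i.e.\ $g\in L_J$ with $\vert g\vert_J^l\le\kappa$, while Hölder (Proposition \ref{Hoelder}) applied to simple $f$ with $\vert f\vert_I^a\le1$ gives the reverse bound $\kappa\le\vert g\vert_J^l$, hence equality.

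To bound $\E[\eta_Y(g/\kappa)]$ I combine two facts. First, for every simple $Z$ the hypothesis $|\E[Zg]|\le\kappa\,\vert Z\vert_I^a$ and the elementary estimate $\vert Z\vert_I^a\le1+I(Z)\le1+\E[\eta_Y^*(Z)]$ (from $I\le\E[\eta_Y^*(\cdot)]$ and $k=1$ in the Amemiya formula) give
\[
\E[Z\,(g/\kappa)]-\E[\eta_Y^*(Z)]\le\tfrac1\kappa\bigl|\E[Zg]\bigr|-\E[\eta_Y^*(Z)]\le\vert Z\vert_I^a-\E[\eta_Y^*(Z)]\le1 .
\]
Second, by Proposition \ref{quienes}(a), $\eta_Y(\cdot,\omega)$ is the a.s.\ convex conjugate of the finite continuous $\eta_Y^*(\cdot,\omega)$, so $\eta_Y(x,\omega)=\sup_i[r_i x-\eta_Y^*(r_i,\omega)]$ for a fixed countable dense set $\{r_i\}_{i\ge1}\subset\RR$ with $r_1=0$. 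Letting $Z_n(\omega)$ be the element of $\{r_1,\dots,r_n\}$ (least index in case of ties) attaining $\max_{i\le n}[r_i(g/\kappa)(\omega)-\eta_Y^*(r_i,\omega)]$ defines \emph{simple} functions for which $\psi_n:=Z_n(g/\kappa)-\eta_Y^*(Z_n)$ increases to $\eta_Y(g/\kappa)$ and satisfies $\psi_n\ge r_1(g/\kappa)-\eta_Y^*(0,\cdot)=0$; moreover $\psi_n\in L^1$ because $g/\kappa\in L^1$ and $\E[\eta_Y^*(r_i,\cdot)]<\infty$ for each $r_i$. The displayed bound (with $Z=Z_n$) reads $\E[\psi_n]\le1$, so monotone convergence gives $\E[\eta_Y(g/\kappa)]=\lim_n\E[\psi_n]\le1$, completing the plan. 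The delicate point is precisely this last step: reducing the a.s.\ Fenchel conjugate of $\eta_Y^*$ to a countable family of \emph{constant} arguments is what keeps the maximizers $Z_n$ simple and lets the simple-function hypothesis take effect.
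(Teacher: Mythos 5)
Your proposal is correct. Parts (1)--(3) run essentially along the paper's own lines: fix $Y\in\Y^*$, bound $I$ above by $\E[\eta_Y^*(\cdot)]$, and conclude by dominated convergence plus the norm-convergence criterion of Lemma \ref{lema}; your part (2) differs only cosmetically, in that you truncate using the minimizer $Y_\alpha$ furnished by Lemma \ref{Iconvex} and then invoke your part (3) for the bounded stage, while the paper routes the bounded stage through density of simple functions in $L^{\infty}$ and the continuous embedding $L^{\infty}\hookrightarrow L_I$ from Proposition \ref{Hoelder} --- both work. Part (4) is where you genuinely depart. The paper fixes $Y\in\Y^*$, notes that the $\|\cdot\|^a_{\eta^*_Y}$-unit ball sits inside the $\vert\cdot\vert_I^a$-unit ball, and then \emph{cites} the classical Orlicz norm-duality formula $\|g\|^l_{\eta_Y}=\sup\{\vert\E[zg]\vert:\|z\|^a_{\eta^*_Y}\leq 1\}$ (Rao--Ren, Ch.\ 3.4, asserted to ``readily generalize'' to Orlicz--Musielak spaces), reducing to simple $z$ by increasing a.s.\ approximation; it then obtains $\E[YU^{-1}(g/\kappa)]\leq 1$ for each $Y\in\Y^*$ and finishes exactly as you do, via Lemma \ref{suposdensonec}(i) and the H\"older inequality for the reverse bound. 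You instead prove the modular bound $\E[\eta_Y(g/\kappa)]\leq 1$ from scratch: the scaled hypothesis together with $\vert Z\vert_I^a\leq 1+\E[\eta_Y^*(Z)]$ gives $\E[Z(g/\kappa)-\eta_Y^*(Z)]\leq 1$ for every simple $Z$, and you realize the a.s.\ Fenchel conjugate $\eta_Y(x,\omega)=\sup_z[zx-\eta_Y^*(z,\omega)]$ along a countable dense grid of constants --- legitimate because $\eta_Y^*(\cdot,\omega)$ is a.s.\ finite convex, hence continuous --- so that the pointwise maximizers $Z_n$ are \emph{simple} and $0\leq\psi_n\uparrow\eta_Y(g/\kappa)$, closing with monotone convergence; the preliminary step $g\in L^1$, needed for the integrability of $\psi_n$, is correctly secured by testing against the simple functions $\mathrm{sign}(g)\ind{\{|g|\leq n\}}$. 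The trade-off: the paper's argument is shorter but leans on an external duality theorem whose Musielak-space version is asserted rather than proved, whereas your version is fully self-contained within the paper's own lemmas at the cost of some length. I see no gaps.
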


\medskip

Notice that   a property  analogous to  point $(3)$ in the above lemma does not hold in $E_J$ if $\Y$ is not uniformly integrable. 


\subsection{Applications of the modular approach to the robust optimization problem}\label{subsecmain}

As a consequence of   Proposition \ref{Hoelder}, we can prove the following result, of interest on its own, which we already mentioned  in Remark \ref{ilusion} and will be useful in proving the general minimax Theorem \ref{minimaxsinreflexividad} below:
\begin{proposition}
\label{coercividad2}
Under Assumption  \ref{assgnral},  for all  $x>0$ we have that  
\begin{equation} 
\forall  \Q\in \qu:\quad (1+x)\left | \frac{d\Q}{d\R}\right |^l_{I} \geq  u_{\Q}(x)\geq (1\wedge x) \left | \frac{d\Q}{d\R}\right |^a_{I} . \label{sandw2}
\end{equation}

\end{proposition}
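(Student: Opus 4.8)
The plan is to express the subjective value $u_{\Q}(x)$ through the convex modular $I$ and to recognise that the resulting quantity is structurally an Amemiya-type norm. Write $Z:=\frac{d\Q}{d\R}\in L_I$. Starting from \eqref{exprv1} specialised to a single $\Q$ (equivalently, from the definition of $v_{\Q}$ together with $\Y_{\Q}(y)=\{yY/Z:Y\in\Y\}$), one has $v_{\Q}(y)=\inf_{Y\in\Y}\E[Z\,V(yY_T/Z)]$; since pointwise $Z\,V(yY_T/Z)=y\,\eta^*_Y(Z/y)$, taking the infimum over $Y\in\Y$ yields the key identity
\begin{equation*}
v_{\Q}(y)=y\,I(Z/y).
\end{equation*}
Substituting $k=1/y$ then gives $\inf_{y>0}\bigl(v_{\Q}(y)+xy\bigr)=\inf_{k>0}\tfrac1k\bigl(x+I(kZ)\bigr)$, which for $x=1$ is exactly the Amemiya norm $\vert Z\vert_I^a=\inf_{k>0}\tfrac1k(1+I(kZ))$. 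The whole proposition thus amounts to sandwiching this ``$x$-weighted'' expression between $(1\wedge x)$ and $(1+x)$ times the genuine norms of $Z$.

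For the upper bound I would use only weak duality, which holds for every $\Q\ll\R$: by Fenchel's inequality $U(X_T)\le X_TY_T+V(Y_T)$, and for $X\in\mathcal{X}(x)$, $Y\in\Y_{\Q}(y)$ the process $XY$ is a $\Q$-supermartingale with $\E^{\Q}[X_TY_T]\le xy$, whence $u_{\Q}(x)\le v_{\Q}(y)+xy$ for all $y>0$. Combined with the identity above this reads $u_{\Q}(x)\le\inf_{k>0}\tfrac1k(x+I(kZ))$. Evaluating the infimum at the single point $k=1/\vert Z\vert_I^l$ (legitimate since $0<\vert Z\vert_I^l<\infty$, because $Z\neq0$ and $Z\in L_I$) and invoking $I(Z/\vert Z\vert_I^l)\le1$ from Lemma \ref{lema}(1) gives
\begin{equation*}
u_{\Q}(x)\le \vert Z\vert_I^l\bigl(x+I(Z/\vert Z\vert_I^l)\bigr)\le(1+x)\vert Z\vert_I^l,
\end{equation*}
which in particular proves that $u_{\Q}(x)<\infty$.

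For the lower bound I would argue directly from the definition of $u_{\Q}$ as a supremum, which avoids any strong-duality input. For $M\ge0$ with $J(M)\le x$, the last item of Lemma \ref{Iconvex} furnishes $X\in\mathcal{X}(x)$ with $X_T\ge U^{-1}(M)$; since $J(M)<\infty$ forces $M<\Delta$ a.s.\ (otherwise no finite wealth could dominate $U^{-1}(M)=+\infty$, cf.\ Definition \ref{INADA}), applying the increasing $U$ gives $U(X_T)\ge M$, hence $u_{\Q}(x)\ge\E^{\Q}[U(X_T)]\ge\E[ZM]$. Optimising, $u_{\Q}(x)\ge\sup\{\E[ZM]:M\ge0,\ J(M)\le x\}$. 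Now the Amemiya norm admits the dual representation $\vert Z\vert_I^a=\sup\{\E[ZM]:M\in L_J,\ J(M)\le1\}$ (the analogue for modular spaces of the Orlicz-norm formula in Theorem \ref{variasobsorlicz}, available here because $I$ and $J$ are conjugate reflexive modulars), and since $Z\ge0$ the optimal $M$ may be taken $\ge0$. For $x\ge1$ the constraint $J(M)\le1$ is more restrictive than $J(M)\le x$, so the supremum dominates $\vert Z\vert_I^a$; for $x\le1$, replacing $M$ by $xM$ (so that $J(xM)\le xJ(M)\le x$ by convexity and $J(0)=0$) produces the factor $x$. In both cases $u_{\Q}(x)\ge(1\wedge x)\vert Z\vert_I^a$, completing \eqref{sandw2}.

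The delicate points are two. First, the dual-norm representation of $\vert\cdot\vert_I^a$ must be secured for the modular space $L_I$ itself rather than merely for the Orlicz-Musielak building blocks $L^*_Y$; I would deduce it from the conjugacy $I^{*}(l_M)=J(M)$ of Proposition \ref{rep} together with the reflexivity of the modulars $I,J$. Second, one must be careful that $Z=\frac{d\Q}{d\R}$ need not be strictly positive for $\Q\in\qu\setminus\qu_e$; this is precisely why I favour the direct supremum argument for the lower bound over the conjugacy \eqref{uQvQ} of \cite{KrSch}, which is stated for a fixed \emph{equivalent} reference measure and whose extension to merely absolutely continuous $\Q$ would require additional justification. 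Should one nonetheless prefer the duality route, the finiteness of $u_{\Q}$ just established activates \eqref{uQvQ}, and the lower bound then follows from the pointwise inequality $x+I(kZ)\ge(1\wedge x)(1+I(kZ))$, valid since $I\ge0$, upon taking the infimum over $k>0$.
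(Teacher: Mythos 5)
Your architecture is sound and genuinely different from the paper's in both halves. For the upper bound the paper argues via the H\"older inequality of Proposition \ref{Hoelder} combined with the third item of Lemma \ref{Iconvex} (which gives $J(U(X_T))\le x$ for $X\in\mathcal{X}(x)$, hence $|U(X_T)|_J^a\le 1+x$), whereas you use weak duality plus the identity $v_{\Q}(y)=yI(Z/y)$ and evaluate the resulting Amemiya-type expression at $k=1/\vert Z\vert_I^l$; both work, though your route silently needs that the Bayes-rule candidates $yY_t/Z_t^{\Q}$ furnish elements of $\Y_{\Q}(y)$ for merely absolutely continuous $\Q$ (the paper states the representation of $\Y_{\Q}(y)$ only for $\Q\in\qu_e$) --- a one-line verification, but it should be said. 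For the lower bound, your ``fallback'' is the paper's proof verbatim: the paper shows $\vert Z\vert_I^a\le y+v_{\Q}(y)$, invokes $u_{\Q}(x)=\inf_{y>0}(v_{\Q}(y)+xy)$ from Theorem 3.1 of \cite{KrSch} (your inequality $x+I(kZ)\ge(1\wedge x)(1+I(kZ))$ is the same bookkeeping as the paper's constant $c\ge 1$ with $x\ge c^{-1}$), and disposes of $\Q\in\qu\setminus\qu_e$ --- precisely the issue you flag --- by Lemma 3.3 of \cite{SchiedWu}. Your preferred direct-supremum route, which treats all $\Q\ll\R$ uniformly and bypasses \cite{KrSch} entirely, would be a genuinely nicer argument, but it is exactly there that a real gap sits.

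The gap is the representation $\vert Z\vert_I^a=\sup\{\E[ZM]: M\in L_J,\ J(M)\le1\}$ on all of $L_I$, which the tools you cite do not deliver. Proposition \ref{rep} identifies $(E_I,\vert\cdot\vert_I^a)^*$ with $(L_J,\vert\cdot\vert_J^l)$ isometrically, and since $\{J\le1\}$ is the closed unit ball of $\vert\cdot\vert_J^l$ (Lemma \ref{lema}, part (1), plus convexity of $J$), Hahn--Banach gives your formula --- but only for $Z\in E_I$. Proposition \ref{coercividad2} is stated under Assumption \ref{assgnral} alone, where $E_I\subsetneq L_I$ is possible (equality needs \eqref{Vdelta}); moreover the ``reflexivity of the modulars'' you invoke says $I(Z)=\sup_{l\in L_I^*}\{l(Z)-I^*(l)\}$ with the supremum over the \emph{full} topological dual $L_I^*$, which may contain singular functionals of finite $I^*$ that are not of the form $l_M$, and Proposition \ref{rep} itself asserts $J^*(l_Z)=I(Z)$ only under the hypothesis $E_I=L_I$. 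So the deduction as sketched does not close, and the needed inf-sup interchange is left unjustified. The claim is in fact true and repairable with the paper's toolkit: for $Z\ge0$ set $Z_n=Z\wedge n\in E_I$; monotone convergence inside each $\E[\eta_Y^*(kZ_n)]$ combined with the lower semicontinuity of $I$ (Lemma \ref{Iconvex}) gives $I(kZ_n)\uparrow I(kZ)$ for each fixed $k>0$, and a convexity argument on $k\mapsto\frac1k\bigl(1+I(kZ_n)\bigr)$ (near-minimizers $k_n$ stay bounded away from $0$ since the expression dominates $1/k_n$; if $k_n\to\infty$, use $I(k_nZ_n)/k_n\ge I(kZ_n)/k$ for $k_n\ge k$) yields $\vert Z_n\vert_I^a\uparrow\vert Z\vert_I^a$, after which your supremum dominates each $\vert Z_n\vert_I^a$. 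None of this is in the paper, so if you keep the direct route you must write it out; otherwise fall back on the duality route, at the price of quoting Lemma 3.3 of \cite{SchiedWu} for the non-equivalent measures, exactly as the paper does.
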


\begin{proof}
By Proposition \ref{Hoelder} we have:
$$\E^{\Q}[U(X_T)]\leq |d\Q/d\R |_I^l |U(X_T) |_J^a \leq [1+J(U(X_T))] |d\Q/d\R |_I^l,$$
by definition of the norm. Hence, by Lemma \ref{Iconvex} we get that $u_{\Q}(x)\leq [1+x] |d\Q/d\R |_I^l$.\\ 
Now we prove the lower bound for $u_{\Q}(x)$ in \eqref{sandw2}. Let us call $Z=\frac{d\Q}{d\R}\in\frac{d\qu_e}{d\R}$. Recalling that $v_{\Q}(y):=\inf_{Y\in\Y}\E\left[ZV(yY/Z)\right]$, we have: 
$$|Z|^a_{I} \leq y +  yI(Z/y)  = y + v_{\Q}(y) \leq y + c v_{\Q}(y),$$ for each $c\geq 1$. Calling $A_{\Q}(y)=v_{\Q}(y) + xy$, then $A_{\Q}(y) \geq \frac{1}{c}|Z|_{I} + \left(x-\frac{1}{c}\right)y $. Thus for every $x>0$, finding $c \geq 1$ such that $x\geq c^{-1}$ and then taking infimum over $\{y>0\}$ yields $u_{\Q}(x)\geq C|Z|_{I}$: if the r.h.s.\ is infinite there is nothing to prove, and otherwise by {\it Theorem 3.1} in \cite{KrSch} it holds $u_{\Q}(x)=\inf_{y>0}\left[ v_{\Q}(y) +xy \right]$ and we still get the desired bound. The best constant $C$ is thus $1\wedge x$.\\
If now $Z:=d\Q/d\R \in \frac{d\qu}{d\R}\setminus \frac{d\qu_e}{d\R} $, an easy application of 
\textit{Lemma 3.3} in \cite{SchiedWu} allows to conclude, from the previous bounds.
\qquad 
\end{proof}

%

\medskip

Thanks to Proposition \ref{rep} we can endow $L_J$ with a decent weak-* topology and thus finally prove one of our main results for incomplete markets: Theorem \ref{minimaxsinreflexividad}.\\

\begin{proof}{\textit{of Theorem \ref{minimaxsinreflexividad}}}
Fix $x>0$. We intend to apply Theorem 7, chapter 6, in \cite{AubEke} (Lopsided minimax Theorem, also stated on page 295 therein). First, let us define the set $G:=\{g\in L_J:0\leq g \leq U(X_T)\mbox{, some }X\in \mathcal{X}(x)\}$. Now we define a bilinear function $F:G\times d\qu/d\R\rightarrow [0,\infty)$   by $F(g,Z)=\E[Zg]$. Evidently under condition $L_I^*\cong L_J$ we must have that $E_I=L_I$ (which is the case anyway if condition \eqref{Vdelta} in Assumption \ref{Usuposdelta} holds).\\
We first endow the convex set $G$ with the weak-* topology $\sigma(L_J,E_I)$. Let us prove that $G$ is closed with it. Indeed if $\{g_{\alpha}\}_{\alpha}\subset G$, we have by Lemma \ref{Iconvex}, part c), that $J(g_{\alpha})\leq x$. But by Proposition \ref{rep}, the spaces $(E_I,\sigma(E_I,L_J)), (L_J,\sigma(L_J,E_I))$ are in topological duality and $J=I^*$. Therefore $J$ is $\sigma(L_J,E_I)$-l.s.c.\ and we conclude that if $g_{\alpha}\rightarrow g$ in this topology, then $J(g)\leq x$. Again by Lemma \ref{Iconvex}, part c), we see that $|g|\in G$. On the other hand $\ind{g<0}\in E_I$ (by Lemma \ref{lemR}) and so $\E[g\ind{g<0}]=\lim \E[g_{\alpha}\ind{g<0}]\geq 0$, from which $g\geq 0$ and so $g\in G$.\\
We now prove that $G$ is weak*-compact. By Banach-Alaoglu it suffices to prove that it is norm bounded. But this holds since $|g|^a_J\leq 1+J(g)\leq 1+x$, for every $g\in G$.\\
We apply now the lopsided minimax Theorem. The function $F$ satisfies:
\begin{itemize}
\item $F(g,\cdot)$ is convex
\item $\{g\in G:F(g,Z)\geq \beta\}$ is weak*-compact for every $\beta,Z$.
\item $F(\cdot,Z)$ is concave and continuous,
\end{itemize}
and thus $-F$ satisfies with ease the requirements of that theorem. We conclude then the minimax equality and the attainability of an optimal $g\in G$. By simple arguments in \cite{SchiedWu} (see the proof of Lemma 3.4 therein) any optimal $g$ must be of the form $U(X_T)$ and one may approximate the \textit{infsup} by taking the infimum over $\qu_e$.\\
Because we proved that $u(x)=\inf_{\Q\in\qu_e} u_{\Q}(x)$ we also have $u(x)=\inf_{\Q\in\qu_e,u_{\Q}(x)<\infty} u_{\Q}(x)$. Now applying Theorem 3.1 in \cite{KrSch} we see that $u(x)=\inf_{y\geq 0 }\left[\inf_{\Q\in\qu_e,u_{\Q}(x)<\infty} v_{\Q}(y)+xy\right]$ and so by the first statement in Lemma 3.5 in \cite{SchiedWu} we conclude that $u$ is the conjugate of $v$. Finiteness of $v$ on $(0,\infty)$ is a consequence of $L_I=E_I$. Because $I$ is convex and $v(y)=\inf_{Z\in d\qu_e/d\R}yI(Z/y)$, an argument as in the proof of Lemma \ref{lemagama} shows that $v$ is convex and so we conclude by Theorem 7.22 in \cite{Aliprantis} that $v$ is continuous in $(0,\infty)$. Since clearly $v(y)\geq V(y)$ we see that $v(0+)=\infty$. Thus defining $v(\cdot)=\infty$ on $(-\infty,0]$ we get a l.s.c.\ function everywhere. Defining $u(0)=0$ and $u(x)=-\infty$ if $x<0$, we still get that $u$ is the concave conjugate of $v$. This in turn implies that $v$ is conjugate to $u$ and also that if $y>0$ then $v(y)=\sup_{x>0}[u(x)+xy]$.\\
%
%
Finally, in the reflexive case, when computing $\inf_{\Q \in \qu}  \E^{\Q}\left(U\left(\hat{X}_T\right)\right)$ we realize that it is enough to do it over a norm-bounded subset of $d\qu/d\R$. Indeed, we have already proven that $u(x)=\inf_{\Q\in\qu}u_{\Q}(x)$, and this is finite by Assumption \ref{assgnral}. Thus we may only regard $\qu\cap \{\Q:u_{\Q}(x)\leq u(x)+1\}$, but by Proposition \ref{coercividad2} we have that $u_{\Q}(x)\geq c(x)|d\Q/d\R|_I^a$, and so this set is contained in $\qu\cap \{\Q:|d\Q/d\R|_I^a\leq c(x)^{-1}[u(x)+1]\}$. By reflexivity and Assumption \ref{assgnral}, these sets are weakly compact (i.e.\ $\sigma(E_I,L_J)$-compact) and so the continuous linear functional $Z\mapsto \E\left(Z U\left(\hat{X}_T\right)\right)$ attains its minimum there. Any of these densities along with the optimal $\hat{X}$ conforms a saddle point. We finally stress that the reflexivity condition on $L_I$ is satisfied if the market is complete and Assumption \ref{Usuposdelta} holds. Indeed by completeness we would have that $I(\cdot)=\E[\eta^*_1(\cdot)]$ and $J(\cdot)=\E[\eta_1(\cdot)]$, and so by Assumption \ref{Usuposdelta} coupled with Proposition \ref{quienes} and Theorem \ref{OMreflex} we get the desired reflexivity.
\\
 \qquad 
\end{proof}

\smallskip

\begin{remark} From the previous proof it is clear that if $d\qu/d\Prob\subset E_I$ then at least for the minimax result and the existence of an optimal wealth, the condition $L_I^*\cong L_J$ can be avoided altogether, since we may work with $E_I$ instead of $L_I$ from the beginning, and $E_I^*\cong L_J$ holds. 
\end{remark}

\smallskip

Let us point out that at the moment we can only prove existence of a worst-case $\hat{\Q}$ (as well as relating it explicitly to the optimal $\hat{X}$) in the case that our modular spaces are reflexive. In Theorem \ref{triste} and Remark \ref{obslimites}, we aim to find out when this is the case.
The following property relates the answer to  the set  $\Y$.

\medskip
\begin{lemma}
\label{propUI}
If $E_J$ has order-continuous norm (i.e. $\vert x_{\alpha}\vert_J \searrow 0$ whenever $x_{\alpha}\searrow 0$) then $\Y$ is uniformly integrable.
\end{lemma}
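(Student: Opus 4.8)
The plan is to argue by \emph{contraposition}: assuming $\Y$ is not uniformly integrable, I will exhibit a sequence $(x_n)$ in $E_J$ with $x_n\searrow 0$ in the almost-sure order but $\inf_n |x_n|_J^l>0$, which directly contradicts order-continuity of the norm. The natural test elements are indicators of a shrinking sequence of sets on which the family $\Y$ keeps a fixed amount of mass.

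First I would record that $\Y$ is bounded in $L^1$: taking $X\equiv 1\in\mathcal{X}(1)$ in the supermartingale definition of $\Y=\Y_{\R}(1)$ gives $\E[Y]\le Y_0=1$ for every $Y\in\Y$. For an $L^1$-bounded family, failure of uniform integrability is equivalent to the failure of the absolute-continuity criterion, so there exist $\epsilon_0>0$, elements $Y_n\in\Y$ and sets $A_n\in\F$ with $\R(A_n)\le 2^{-n}$ and $\E[Y_n\ind{A_n}]\ge\epsilon_0$. Setting $B_n:=\bigcup_{k\ge n}A_k$ produces a \emph{decreasing} sequence with $\R(B_n)\le 2^{-(n-1)}\to 0$, so $\ind{B_n}\searrow 0$ almost surely, hence also in the lattice $E_J$; and since $A_n\subseteq B_n$ we get $\sup_{Y\in\Y}\E[Y\ind{B_n}]\ge\epsilon_0$ for every $n$.

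The second step is the norm estimate. Since $U(0+)=0$ gives $U^{-1}(0)=0$, for every $\beta>0$ one computes
\[
J(\ind{B_n}/\beta)=\sup_{Y\in\Y}\E\big[Y\,U^{-1}(\ind{B_n}/\beta)\big]=U^{-1}(1/\beta)\,\sup_{Y\in\Y}\E[Y\ind{B_n}]\ge \epsilon_0\,U^{-1}(1/\beta).
\]
Hence $J(\ind{B_n}/\beta)\le 1$ forces $U^{-1}(1/\beta)\le 1/\epsilon_0$, i.e.\ $\beta\ge 1/U(1/\epsilon_0)=:\beta_0>0$ (using that $U$ is strictly increasing with $U(0+)=0$, so $U(1/\epsilon_0)\in(0,\infty)$). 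Therefore $|\ind{B_n}|_J^l\ge\beta_0$ for all $n$, contradicting the convergence $|\ind{B_n}|_J^l\to 0$ that order-continuity would impose. The same computation, together with $\sup_{Y\in\Y}\E[Y\ind{B_n}]\le 1$, also gives $J(\alpha\ind{B_n})\le U^{-1}(\alpha)<\infty$ for all $\alpha>0$, which confirms $\ind{B_n}\in E_J$.

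The only place the nature of $U$ enters, and the point I expect to require care, is precisely this membership $\ind{B_n}\in E_J$: it relies on $U^{-1}$ being finite-valued, equivalently on $U$ being unbounded ($\lim_{x\to\infty}U(x)=\infty$), which is the regime of interest (e.g.\ the power utilities). Should $U^{-1}$ take the value $+\infty$ the heart $E_J$ degenerates and one would have to replace indicators by suitably truncated elements; under the standing INADA hypotheses with unbounded $U$, however, the argument above closes the proof.
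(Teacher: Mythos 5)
Your proof is correct, and it takes a genuinely different route from the paper's. The paper's argument is two citations glued together: by Theorem 9.22 in \cite{Aliprantis}, order continuity of the norm on the Banach lattice $E_J$ forces every order-bounded \emph{disjoint} sequence to be norm null, so for disjoint sets $A_n$ one gets $J(\ind{A_n})\to 0$, i.e.\ $\sup_{Y\in\Y}\E[Y\ind{A_n}]\to 0$, and then Theorem 7 in \cite{Diestel} (the disjoint-sequence characterization of uniform integrability for $L^1$-bounded families) gives the conclusion. You instead contrapose and work by hand: the $\epsilon$--$\delta$ characterization of the failure of uniform integrability (legitimate since $\E[Y_T]\le 1$ on $\Y$, as you check via $X\equiv 1\in\mathcal{X}(1)$), the decreasing tails $B_n=\bigcup_{k\ge n}A_k$ with $\ind{B_n}\searrow 0$ a.s., and the explicit factorization $J(\ind{B_n}/\beta)=U^{-1}(1/\beta)\sup_{Y\in\Y}\E[Y\ind{B_n}]$ (valid because $U^{-1}(0)=0$, consistent with $\eta_Y$ being a rho-functional with $\rho(0,\cdot)\equiv 0$), yielding the uniform lower bound $\vert\ind{B_n}\vert_J^l\ge 1/U(1/\epsilon_0)>0$, which contradicts order continuity applied to the decreasing sequence $\ind{B_n}$. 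What your route buys is self-containedness --- nothing beyond the definition of order continuity in the lemma's parenthetical is used, and you even get a quantitative bound --- whereas the paper's route buys brevity at the price of two black boxes and of implicitly needing $E_J$ to be a Banach lattice (which Proposition \ref{lengthy} supplies). Your closing caveat about $U^{-1}$ being finite-valued is well taken, and it is worth noting that it afflicts the paper's proof in exactly the same way: there the disjoint sequence $\ind{A_n}$ must be order bounded \emph{in} $E_J$, in effect requiring $1\in E_J$, hence $U$ unbounded. Indeed, if $\Delta=\lim_{x\to\infty}U(x)<\infty$, then since $\Y$ contains strictly positive elements (densities of equivalent local martingale measures, available under NFLVR), one has $E_J=\{0\}$, so the lemma becomes vacuous; in that degenerate case your suggested repair by truncation cannot produce nonzero test elements, but none are needed either. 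So, modulo this shared and correctly flagged implicit assumption, your argument is a complete and more elementary proof.
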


\begin{proof}
By Theorem 9.22 in \cite{Aliprantis}, $E_J$ has order-continuous norm if and only if every sequence of order-bounded and disjoint
elements is strongly convergent to zero. So take $A_n$ a sequence of disjoint sets. Notice that $\ind{A_n}$ is an order-bounded and disjoint sequence, and thus $\vert \ind{A_n}\vert_J\rightarrow 0$. This implies $J(\ind{A_n})\rightarrow 0$, which means $\sup_{Y\in\Y}\E[\ind{A_n}Y]\rightarrow 0$. Now, from Theorem 7 in \cite{Diestel} this implies that $\Y$ is uniformly integrable. 

\end{proof}

\medskip

\noindent  The following theorem is essential and it implies Theorem \ref{tristeintro}.  

\begin{theorem}
\label{triste} If the set $\Y$ is not uniformly integrable, then neither $E_J$, $L_J$ nor $E_I$ can be reflexive.

\end{theorem}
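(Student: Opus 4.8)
The plan is to run everything off the contrapositive of Lemma \ref{propUI}, combined with the classical principle that a reflexive Banach lattice must have order-continuous norm, and then to transport non-reflexivity from $E_J$ to $L_J$ and finally to $E_I$ by pure Banach-space duality. First I would record that $E_J$ really is a Banach lattice: by Proposition \ref{lengthy} it is a closed subspace of the Banach lattice $L_J$, and since $U^{-1}$ is increasing, $|X'|\le |X|$ with $X\in E_J$ gives $J(\alpha X')\le J(\alpha X)<\infty$ for every $\alpha>0$; thus $E_J$ is a closed order ideal in $L_J$, hence a Banach lattice with the inherited norm and order.

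The key step is then the following. Lemma \ref{propUI} asserts that order-continuity of the norm of $E_J$ forces $\Y$ to be uniformly integrable, so, taking the contrapositive, the standing hypothesis that $\Y$ is \emph{not} uniformly integrable yields that $E_J$ does \emph{not} have order-continuous norm. At this point I invoke the classical fact that a reflexive Banach lattice necessarily has order-continuous norm. Concretely, failure of order-continuity produces, via Theorem 9.22 in \cite{Aliprantis}, a positive, pairwise disjoint, order-bounded sequence $(u_n)$ with $\inf_n\|u_n\|_J\ge\epsilon>0$; since the $u_n$ are disjoint and dominated by a common order bound $u$, one has $\bigvee_{k\le n}u_k=\sum_{k\le n}u_k\le u$, so the partial sums are norm-bounded while the terms stay bounded away from $0$, which means $(u_n)$ spans a sublattice isomorphic to $c_0$. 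As $c_0$ is not reflexive and closed subspaces of reflexive spaces are reflexive, $E_J$ cannot be reflexive.

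Finally I propagate this conclusion. Because $E_J$ is a closed subspace of $L_J$ (Proposition \ref{lengthy}) and closed subspaces of reflexive Banach spaces are reflexive, the non-reflexivity of $E_J$ forces $L_J$ to be non-reflexive. By Proposition \ref{rep} we have the isometric identification $L_J\cong (E_I)^*$, and a Banach space is reflexive exactly when its dual is; hence the non-reflexivity of $L_J=(E_I)^*$ shows that $E_I$ is non-reflexive as well. This settles all three spaces. The only genuinely non-formal ingredient—and the one I would flag as the main obstacle to state cleanly—is the implication \emph{reflexive Banach lattice $\Rightarrow$ order-continuous norm} (equivalently, absence of a lattice copy of $c_0$); everything else is the contrapositive of Lemma \ref{propUI} together with the standard stability of reflexivity under taking closed subspaces and duals.
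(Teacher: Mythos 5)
Your proof is correct and follows essentially the same route as the paper: the contrapositive of Lemma \ref{propUI} combined with the fact that a reflexive Banach lattice has order-continuous norm (which the paper simply cites as Corollary 9.23 in \cite{Aliprantis}), then transfer of non-reflexivity to $L_J$ via the closed-subspace property from Proposition \ref{lengthy} and to $E_I$ via the duality $(E_I)^*\cong L_J$ of Proposition \ref{rep}. Your only additions are expository but welcome ones --- the explicit verification that $E_J$ is a closed order ideal, hence a Banach lattice (which the paper asserts without proof), and the sketch of the $c_0$-copy argument behind the order-continuity principle.
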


\begin{proof} As pointed out in Corollary 9.23 in \cite{Aliprantis}, a reflexive Banach lattice has order continuous norm. Since $E_J$ is a Banach lattice in itself, if it were reflexive, by Lemma \ref{propUI} the set $\Y$ would be uniformly integrable. Thus $E_J$ is not reflexive and therefore $L_J$ neither, since the former is a closed subset of the latter. On the other hand, under the assumption of this section the dual of $E_I$ is isomorphic to $L_J$ (which we proved in Proposition \ref{rep}) which in turn implies that $E_I$ cannot be reflexive either. 

\end{proof}

\medskip

\begin{remark}The previous result  states that lack of uniform integrability of $\Y$ implies that the space $L_I$ cannot be reflexive. This means that the approach used for Orlicz-Musielak spaces (in the complete case) does not extend vis-\`{a}-vis to the current modular space setting. It is remarkable that no growth conditions on $U$ or $V$ may yield reflexivity to our modular spaces as soon as $\mathcal{Y}$ is not uniformly integrable. 
\end{remark}

 \medskip

\begin{remark}
\label{obslimites}
If the set $\Y$ were uniformly integrable, then also the set of absolutely continuous martingale measure $\mathcal{M}$ would be so (more precisely, their densities would be $\sigma(L^1,L^{\infty})-$ relatively compact). Theorem 6.7 and Corollary 7.2 in \cite{Delb} then say that $\mathcal{M}$ must be a singleton, at least in the case of bounded continuous prices and either if all martingales on the filtration are continuous (e.g.\ the augmented brownian filtration) or if the filtration is quasi left-continuous. Therefore in most cases {uniform integrability of }$\Y$ {implies completeness}.

\end{remark}

\medskip

We envisage that further analysis of our modular spaces (for instance identifying the dual of $L_J$, or establishing when $L_I$ is a norm-dual space) may bring a better understanding of the robust problem and the (non)existence of the associated worst-case measures. This could be endeavoured through minimization of entropy techniques alternatively. 

\section*{Appendix}

\begin{proof}{\it (Lemma \ref{lemagama})}
The first two items are well-known and can be found in Lemma 2.3.2 in \cite{JBtesis}. We prove only the third one here. Clearly $\bar{\gamma}_l(x)=\sup_{z\geq 0}\{\vert x\vert z-z V(l/z)\}$. The first order condition for this (assuming $z\neq 0$) is $\vert x\vert-V(l/z)+\frac{l}{z}V'(l/z)=0$. But using that $V'=-[U']^{-1}$ one gets $\vert x\vert=U([U']^{-1}(l/z))$ or better $z=\frac{l}{U'\circ U^{-1}(\vert x\vert)}$. Therefore $\bar{\gamma}_l(x)=\frac{\vert x\vert l}{U'\circ U^{-1}(\vert x\vert)}-\frac{l}{U'\circ U^{-1}(\vert x\vert)}V\circ U'\circ U^{-1}(\vert x\vert)$. Using again the identity $V(y)=U([U']^{-1}(y))-y[U']^{-1}(y)$ one arrives at $\bar{\gamma}_l(x)=lU^{-1}(\vert x\vert)$. By Lemma \ref{lemagama} one knows that $\bar{\gamma}_l\geq 0$ and is null only at the origin. Thus if the supremum defining it were attained at $0$, since $0V(l/0)=0$, this shows $x$ must be null. But also $U^{-1}(0)=0$. Hence, the asserted expression for $\bar{\gamma}_l$ is always valid.
\qquad 
\end{proof}

\smallskip

\begin{proof}{\it (Lemma \ref{domPhi})}  Let $\widetilde{L_{\eta^*}}$ denote the  algebraic dual  of   $L_{\eta}$ and $\widehat{L_{\eta^*}}$  its subspace of relatively bounded forms.  We extend $\Phi_y^*$  to $\widetilde{L_{\eta^*}}$ by replacing   the expectation in \eqref{Phi*yZ}     by the  dual product in  $\widetilde{L_{\eta^*}}\times  L_{\eta}$  and note that this $\Phi_y^*$  
  corresponds to the function $\Phi^* $ in  Proposition 5.10 in  \cite{Leo-ent}, while space $U$ therein corresponds to  space $L_{\eta}$ here. Moreover,  $\Phi^*_+$  and $\Phi^*_-$   therein respectively  correspond in our setting to $\Phi^*_{y,+}$ 
and the convex indicator of $0$ (since $\gamma(-|\cdot|)=0$) and part a) of that result we then get   $\mbox{dom} \, \Phi^*_{y}=\{ \xi \in \widetilde{L_{\eta^*}} \, : \Phi^*_{y}(\xi)<\infty\} \subseteq \{ \xi \in \widehat{L_{\eta^*}}:\, \xi_-=0\} $ 
and   
$\Phi^*_{y}(\xi)=\Phi^*_{y,+}(\xi_+)=\Phi^*_{y}(\xi_+) $ for all $\xi \in \mbox{dom} \, \Phi^*_{y}$. 

Notice  now  on one hand  that  $L_{\eta^*}\subset \mbox{dom} \, \Phi^*_{y,+}$  since $ \Phi^*_{y,+}(Z)= \int \gamma_y^*(|Z| ) d\R<\infty$   for $Z\in L_{\eta^*}$ and, on the other, 
$ \langle \xi, W/ \| W\|_{L_{\eta}} \rangle  \leq  \Phi^*_{y,+}(\xi) +    y \int \gamma(|W |/\| W\|_{L_{\eta}} ) d\R\leq  \Phi^*_{y,+}(\xi) +    y $
 for all $\xi\in  \mbox{dom} \, \Phi^*_{y,+}$ and $W\in  L_{\eta}\backslash\{0\}$, 
since  $\int \gamma(|W |/\| W\|_{L_{\eta}} ) d\R=\int \eta(W /\| W\|_{L_{\eta}} ) d\R\leq 1$ (by definition of $ \| W\|_{L_{\eta}} $ and Fatou's Lemma). Taking $-W$ instead of $W$, we  get 
 $| \langle \xi, W \rangle | \leq  (  \Phi^*_{y,+}(\xi) +    y )\| W\|_{L_{\eta}}$. Thus,   we have 
$\mbox{dom} \, \Phi^*_{y,+}=L_{\eta^*}$ and, in the notation of   Proposition 5.10 in  \cite{Leo-ent},   $L=L_{\eta^*}$, $L_+=L_{\eta^*}$ and $L_-= \{0\}$.
With  part b) of  that result we get that
$\mbox{dom} \, \overline{\Phi}_{y}\subset \widehat{L_{\eta}}$
and that  for all $\zeta \in \mbox{dom} \,  \overline{\Phi}_{y}$ the first two equalities in \eqref{phiphiphi} hold.  Since the Orlicz space $L_{\eta^*}$ is reflexive, by Theorem 9.11 in \cite{Aliprantis} we get that  $\widehat{L_{\eta}}=L_{\eta} $ so that  $\mbox{dom} \, \overline{\Phi}_{y}\subset  L_{\eta}$  as claimed.  We then easily conclude  since $ \overline{\Phi}_{y}$ coincides with $\Phi_{y}$ on $L_{\eta}$. 
 \end{proof}
\smallskip

\begin{proof}{\it (Lemma \ref{suposdensonec})}
We prove (i) first. Call $Y^*$ some element of $\Y^*$. For any $Y \in \Y$ define $Y^n=\frac{n-1}{n}Y+\frac{1}{n}Y^*$. By convexity $Y^n \in \Y$, and by non-negativity $Y^n\geq\frac{1}{n} Y^*$, implying that $Y^n\in \Y^*$, since $V$ is decreasing. 
By convexity $\E[\vert Z \vert V(Y^n_T /\vert Z \vert ) ]\leq \left(\frac{n-1}{n}\right )\E[\vert Z \vert V(Y_T /\vert Z \vert ) ] + \frac{1}{n}\E[\vert Z \vert V(Y_T^*/\vert Z \vert ) ] $, so $\liminf \E[\vert Z \vert V(Y^n_T /\vert Z \vert ) ]\leq \E[\vert Z \vert V(Y_T /\vert Z \vert ) ]$, and we get that $I(Z)=\inf_{Y\in \Y^*}\E[\eta_Y^*(Z)]$. On the other hand, take $X\in dom(J)$ and since of course $\frac{n-1}{n}\E[YU^{-1}(X)]+\frac{1}{n}\E[Y^*U^{-1}(X)]$ tends to $\E[YU^{-1}(X)]$, this directly shows that $J(X)=\sup_{Y\in \Y^*}\E[\eta_Y(X)]$.
If $J(X)=+\infty$, take $\E[\hat{Y}_mU^{-1}(X)]$ growing to $+\infty$. If these values are finite then the previous argument shows how to approximate them in $\Y^*$. If (for large enough $m$) they are infinite, then also $\frac{n-1}{n}\hat{Y}_m + \frac{1}{n}Y^*$ generates an infinite value. Therefore the identity for $J$ always holds.\\
For condition (ii), one need only observe that $1\in\Y^* $ and $\mathcal{E}\left(-\int\lambda dM\right )\in\Y^* $, respectively.
\end{proof}

\medskip

\begin{proof}{\it (Proposition \ref{lengthy})}
The almost-sure order is a partial order. From this both $L_I$ and $L_J$ are ordered vector spaces and lattices, that is, Riesz lattices. Now, because any of the norms defined in this section are lattice norm (i.e.\ order preserving), both $L_I$ and $L_J$ are Normed Riesz Spaces. \\
First we prove that both $E_I$ and $E_J$ are closed subspaces of $L_I$ and $L_J$, in the spirit of the proof of Proposition 3 in \cite{RaoRen}, Chap.  3.4. Denote $F$ either $I$ or $J$. We need to show that $\overline{E_F}\subset E_F$. Take $s \in \overline{E_F}$ and $s_n\rightarrow s$ elements in $E_F$. For a fixed positive $k$, choose $n$ so that $\vert s-s_n \vert_F^l < \frac{1}{2k}$. We then see by convexity and Lemma \ref{lema} part 1), that 
$$F(2k[s-s_n] )=F\left ( \frac{2k[s-s_n] \vert 2k[s-s_n] \vert_F^l}{\vert 2k[s-s_n] \vert_F^l} \right )\leq \vert 2k[s-s_n] \vert_F^l \leq 1.$$  

Thus, since $ks=\frac{1}{2}(2k[s-s_n])+ \frac{1}{2}[2ks_n] $ we get by convexity that $F(ks)\leq \frac{1}{2} F(2k[s-s_n])+\frac{1}{2}F(2ks_n)<\infty$. Since this holds for any $k>0$, we conclude that $s\in E_F$. 

Now completeness of $E_I$ and $L_J$ will be proved, showing that both spaces are Banach lattices.
For $E_I$ recall (Theorem 9.3 in \cite{Aliprantis}) that a Normed Riesz space is a Banach Lattice if and only if every positive, increasing Cauchy sequence is norm convergent. Therefore take $(Z_n)$ a positive, increasing Cauchy sequence in $E_I$ (for Luxemburg's norm). By definition $(Z_n)$ converges a.s.\ to its supremum, which we call $Z$, and might be $\infty$-valued. Since the sequence is Cauchy, there is a $k>0$ such that $\vert Z_n\vert_I^l\leq k$ for every $n$. By parts (1) and (3) in Lemma \ref{lema} we have that $\E(Z_n/k)\leq I(Z_n/k)+J(1)\leq 1+U^{-1}(1)$ implying by Fatou's Lemma that $Z$ is in particular finite, and so $Z_n$ converges to $Z$ in probability (on the non-extended real line). Notice that for every $\lambda>0$ also $I(\lambda (Z_n-Z_m))\rightarrow 0$ as $(n,m)$ grows. Indeed, if $\lambda\vert Z_n-Z_m \vert_I^l\leq \epsilon <1$ we have by convexity and Lemma \ref{lema}.(1) that $I(\lambda (Z_n-Z_m))\leq \lambda \vert Z_n-Z_m \vert_I^l \leq  \epsilon $. Thus,  fixing any $\lambda>0$ we have for every $\epsilon>0$ the existence of $N=N(\lambda,\epsilon)$ big enough s.t. $m>n>N$ implies $I( \lambda (Z_m-Z_n))\leq \epsilon$ and hence taking limit in $m$ by lower-semicontinuity we get $I(\lambda (Z-Z_n))\leq \epsilon$. Therefore $I(\lambda|Z_n-Z|)\rightarrow 0$ and by part (3) in Lemma \ref{lema} we see that $Z_n\rightarrow Z$ strongly. By the first part of this proof we finally get that $Z\in E_I$.

Now for $L_J$, take $(X_n)$ an arbitrary Cauchy sequence. The same sequence is Cauchy in every Orlicz-Musielak space associated to $YU^{-1}(\cdot)$ ($Y\in\Y^*$). Call $\|\cdot\|_Y$ the associated Luxemburg norm. Because these spaces are complete, the sequence norm-converges to (possibly different) limits in each of them. However, since this convergences are stronger than $L^0$ convergence, the limit must be necessarily (a.s.) unique. Thus, $X_n\rightarrow X$ for every Orlicz-Musielak space associated to $\eta_Y$ and in probability. By Fatou's lemma $W\mapsto \E[YU^{-1}(W)]$ is lower-semicontinuous in $(L^0)_+$ and thus (as a supremum) also $J(\cdot)$ is so, from which $J(kX)\leq \liminf J(kX_n)\leq 1$ where $k^{-1}$ is an upper bound for the $L_J$ norms of the $(X_n)$ (it exists because sequence is Cauchy) and by Lemma \ref{lema}.(1). Therefore $X\in L_J$. Evidently $\| X_n-X\|_Y \leq \| X_n-X_m\|_Y + \| X_m-X\|_Y \leq \vert X_n-X_m\vert_J^l + \| X_m-X\|_Y $.  Now given $\epsilon >0$ we can make $\vert X_n-X_m\vert_J^l\leq \epsilon$ for $n,m\geq N$ independently of $Y\in\Y^*$. On the other hand $\| X_m-X\|_Y\leq \epsilon$ for $m\geq M(Y)$. From here, $\| X_n-X\|_Y\leq 2\epsilon$ for every $n\geq N$ independent of $Y$. Thus by Lemma \ref{lema}.(1) again, $\E[YU^{-1}([X_n-X/[2\epsilon]])]\leq 1$ and taking supremum yields $J([X_n-X]/[2\epsilon])\leq 1$ also, from which $\vert X_n-X\vert_J^l\leq 2\epsilon$ by definition of this norm. Therefore the sequence is convergent. 

For the order-continuity of $E_I$, we need to show that if $Z_{\alpha}\searrow 0$ a.s. then $\vert Z_{\alpha} \vert_I \searrow 0$. Fix $\beta >0$ and for a fixed $\alpha_0$ in the set of indices, notice that $I(\beta Z_{\alpha_0})<\infty$. Thus there is a $Y$ such that $\E[Z_{\alpha_0} V(Y/(\beta Z_{\alpha_0}))]<\infty$. But $Z_{\alpha} V(Y/(\beta Z_{\alpha}))$ decreases to $0$ and is dominated by $Z_{\alpha_0} V(Y/(\beta Z_{\alpha_0}))$ (for $\alpha$ big enough, in the sense of the net), which is integrable. By dominated (or monotone) convergence then $\E[Z_{\alpha} V(Y/(\beta Z_{\alpha}))]\searrow 0$ and therefore $I(\beta Z_{\alpha})\searrow 0$. Since this holds for every $\beta >0$, by Lemma \ref{lema}.(3) this shows that $\vert Z_{\alpha} \vert_I \searrow 0$.
%

\end{proof}

\medskip

\begin{proof}{\it (Proposition \ref{rep})}
Let $l\in (E_I)^*$ and define $\mu(A):=l(\ind{A})$ for $A\in\F$ (well-defined and finite by Lemma \ref{lemR},(1)). Clearly $\mu(\emptyset)=0$. Also if $A_n\in\F$ are disjoint, and writing $A=\cup_n A_n$, then $\sum_{n\leq N} \ind{A_n}\rightarrow \ind{A}$ a.s. and $\vert \sum_{n\leq N} \ind{A_n} - \ind{A} \vert \leq 1$. Therefore by (3) in Lemma \ref{lemR} then $\sum_{n\leq N} \ind{A_n}\rightarrow \ind{A}$ in $E_I$. By continuity of $l$ then $l(\ind{A})=\lim_{N}\sum_{n\leq N} l(\ind{A_n})$. Thus $\mu$ is clearly a finite, signed, countably-additive measure. If $A\in\F$ is such that $\Prob(A)=0$ then $l(\ind{A})=0$ and hence $\mu(A)=0$. So $\mu$ is absolutely continuous w.r.t.\ $\Prob$. By Radon-Nikodym's Theorem then $g:=\frac{d\mu}{d\Prob}$ exists and is $\Prob-$integrable. By linearity then $l(f)=\E[fg]$ for every simple function $f$. By continuity $\vert\E[fg] \vert \leq C \vert f \vert_I$ for simple functions. Therefore $\sup\{\vert\E[fg] \vert :f\mbox{ simple and }\vert f \vert_I^a\leq 1 \}<\infty$ and by (4) in Lemma 
\ref{lemR} we get that $g\in L_J$ and that $\vert g \vert_J^l$ equals the above supremum. Since both $l(\cdot)$ and $\E(\cdot g)$ are uniformly continuous functions coinciding on a dense set (by (2) in Lemma \ref{lemR}, simple functions are such a set), they must agree in the whole of $E_I$.
Hence $l(f)=\E[fg]$ for every $f\in E_I$ and so $(E_I)^*\subset L_J$, but using Proposition \ref{Hoelder} the reverse inclusion already holds. Therefore $(E_I)^* = L_J$, where the identification is isomorphic if $L_J$ is endowed with the Luxemburg norm and $E_I$ with the Amemiya one.\\
Now take $X\in L_J$ and call $l_X(\cdot):=\E[X\cdot]$. Then: \small
$$
\begin{array}{lllll}
I^*(l_X)&=& \sup\limits_{Z\in L_I}\left\{ \E[XZ] - \inf\limits_{Y\in\Y^*}\E\left[\vert Z\vert V\left(\frac{Y}{\vert Z\vert}\right )\right]\right \}
&=& \sup\limits_{\Y\in\Y^*}\sup\limits_{Z\in L_I}\left\{ \E[XZ] - \E\left[\vert Z\vert V\left(\frac{Y}{\vert Z\vert}\right )\right] \right\} \\
&=& \sup\limits_{\Y\in\Y^*}\sup\limits_{Z\in L_{\eta^*_Y}}\left\{ \E[XZ] - \E\left[\vert Z\vert V\left(\frac{Y}{\vert Z\vert}\right )\right]\right \} 
&=& \sup\limits_{\Y\in\Y^*} \E[YU^{-1}(X)]\\
&=& J(X)&&,
\end{array}
$$\normalsize
since the conjugate of $\eta_Y^*$ is $\eta_Y$. Now fix $Z\in L_I$ and assume $L_I=E_I$. Then $J^*(l_Z)=\sup_{X\in L_J}\{\E[XZ]-I^*[l_X]\}$ by the previous lines. On the other hand,
$I(Z)=\sup_{l\in (L_I)^*}\{l(Z)-I^*(l)\}=\sup_{X\in L_J}\{\E[XZ]-I^*[l_X]\}$, since $(E_I)^*=L_J$. Thus $J^*(l_Z)=I(Z)$.

\end{proof}

\medskip

\begin{proof}{\it (Lemma \ref{lemR})}
For the first point, $\ind{A}\in E_I$ iff $\inf_{Y\in\Y}\E[\ind{A}V(\beta Y)]<\infty$ for every $\beta >0$. This is true, simply by taking a $Y\in\Y^*$.
 

For the third point, if $\vert Z_n\vert \leq K$, then $I(\alpha Z_n)\leq \alpha \inf_{Y\in\Y}\E[KV(Y/(\alpha K))]$. But we have $\vert Z_n \vert V(Y/\alpha\vert Z_n \vert)\rightarrow 0$ a.s.\ and this sequence is dominated by $KV(Y/(\alpha K))$. Therefore if there exists a $Y\in\Y$ such that $\E[V(Y/(\alpha K))]<\infty$, then it would follow that $I(\alpha Z_n)\rightarrow 0$. But this holds (for every $\alpha >0$) again by taking $Y\in\Y^*$. By Lemma \ref{lema}.(3) we conclude that $Z_n\rightarrow 0$ strongly.

The proof of the second point resembles the previous one. First, since simple functions are dense in $L^{\infty}$ and by Proposition \ref{Hoelder} this last space is contained continuously in $L_I$ (obviously then also in $E_I$), it suffices to show that bounded functions are dense in $E_I$. Take $Z\in E_I$ and define $Z_n=Z\ind{\vert Z \vert <n}$. Thus $X_n:=\vert Z-Z_n \vert=\vert Z\vert \ind{\vert Z \vert \geq n}\searrow 0$ a.s. Now fix $\beta >0$. Taking any $N>0$ and because $\infty > I(\beta X_N)=\beta\E[X_N V(Y/(\beta X_N))]$ for some $Y\in\Y$, and $X_n V(Y/(\beta X_n))\searrow 0$ a.s. then by dominated (or monotone) convergence $\E[X_n V(Y/(\beta X_n))]\rightarrow 0 $ and thus $I(\beta X_n) \rightarrow 0 $. Now because this holds for every $\beta$, by Lemma \ref{lema}.(3) then $\vert X_n \vert_I\rightarrow 0$.

Finally, for the fourth point, take $\kappa<\infty$ as in the statement. Then clearly $\sup\{\vert\E[zg] \vert :z\mbox{ simple and }\vert\vert z \vert\vert_{\eta^*_Y}^a\leq 1 \}\leq \kappa$ for every $Y\in\Y^*$. A classical result in Orlicz theory (see (10) in Proposition 10, \cite{RaoRen}, chapter 3.4), which readily generalizes to Orlicz-Musielak spaces, implies that $\| g \|_{\eta_Y}^l = \sup\{\vert\E[zg] \vert : \vert\vert z \vert\vert_{\eta^*_Y}^a\leq 1 \}$, and hence $\| g \|_{\eta_Y}^l\leq \kappa$, since any non-negative $z$ may be approximated in an increasing way a.s. by simple functions. Hence $\sup_{Y\in\Y^*} \| g \|_{\eta_Y}^l \leq \kappa$. Since $\E[YU^{-1}(g/\| g \|_{\eta_Y}^l)]\leq 1$ (by definition of the norm and Fatou's Lemma) then $\E[YU^{-1}(g/\kappa)]\leq 1$ and thus $J(g/\kappa)\leq 1$ from which $\vert g \vert_J^l\leq \kappa<\infty$. Finally, by Proposition \ref{Hoelder} we have $\vert\E[fg] \vert \leq \vert g \vert_J^l \vert f \vert_I^a$ and so if $f$ is simple and such that $\vert f \vert_I^a\leq 1$ we get that $\vert\E[fg] \vert \leq \vert g \vert_J^l $ and then by taking supremum over such functions we derive that $\kappa \leq \vert g \vert_J^l $, and therefore there is equality.
%

\end{proof}

\smallskip

{\bf Acknowledgements: } The authors are grateful to  two anonymous referees for their careful  readings and for  valuable comments and suggestions that lead to substantial improvements in  the presentation. They also
 thank C.L\'eonard for useful exchanges about his works on 
entropy minimization and for providing full details on a gauge argument 
therein, F.Delbaen for 
kindly commenting on the link between compactness of martingale measure densities and incompleteness,  
A.Schied for encouraging comments  on  an early version of this work  and for pointing out some references, M.Kupper for useful insight into the incomplete setting and 
A. Kozek for  providing electronic versions of his works. JB  (resp. JF) acknowledges hospitality 
and support  of HIM Institute in Bonn, where part of this work was 
carried out during a one month (resp. one week) visit in May 2013 in the context of the trimester program Stochastic Dynamics in Economics and Finance. 

\bibliographystyle{plain}
\bibliography{mybib}

\end{document}